\title{Polynomial $6j$-Symbols and States Sums}
\author{Nathan Geer} 
\address{Mathematics \& Statistics\\
  Utah State University \\\newline
  Logan, Utah 84322, USA} 
\email{nathan.geer@usu.edu} 
\author{Bertrand Patureau-Mirand} 
\address{ L.M.A.M.\\ Universit\'e de Bretagne-Sud\\
  Universit\'e europ\'eenne de Bretagne,\\\newline 
  BP 573 F-56017 Vannes\\ France} 
\email{bertrand.patureau@univ-ubs.fr}
\newtheorem{definition}{Definition}
\newtheorem{theorem}[definition]{Theorem}
\newtheorem{proposition}[definition]{Proposition}
\newtheorem{lemma}[definition]{Lemma}
\newtheorem{remark}[definition]{Remark}
\newtheorem{corollary}[definition]{Corollary}
\newcommand{\E}{\mathcal E}
\newcommand{\Ga}{\Gamma}
\newcommand{\col}{\operatorname{Col}}
\newcommand{\unit}{\ensuremath{\mathbb{I}}}
\newcommand{\cat}{\mathcal{C}}
\newcommand{\catd}{{\mathcal{C}^H}}
\newcommand{\LL}{\mathcal{L}}
\newcommand{\T}{\mathcal{T}}
\renewcommand{\P}{\mathcal{P}}
\newcommand{\Gr}{G}
\renewcommand{\wp}{{\Phi}}
\newcommand{\p}{\varphi}
\newcommand{\X}{{X}}
\newcommand{\qd}{\operatorname{\mathsf{d}}}
\newcommand{\qD}{\operatorname{\mathsf{D}}}
\newcommand{\states}{\operatorname{St}}
\newcommand{\RL}{{\mathfrak L}}
\newcommand{\TV}{\operatorname{\mathsf{\tau}}}
\newcommand{\Tor}{\operatorname{Tor}}
\renewcommand{\phi}{\varphi}
\renewcommand{\epsilon}{\varepsilon}
\newcommand{\sqq}[1]{{\operatorname{Root}_\ro{\left(#1\right)}}}
\newcommand{\Sym}{\ensuremath{\mathfrak{S}}}
\newcommand{\C}{\ensuremath{\mathbb{C}} }
\newcommand{\Z}{\ensuremath{\mathbb{Z}} }
\newcommand{\R}{\ensuremath{\mathbb{R}} }
\newcommand{\N}{\ensuremath{\mathbb{N}} }
\newcommand{\slt}{\ensuremath{\mathfrak{sl}(2)}}
\newcommand{\Uslt}{\ensuremath{U_{{\qr}}(\slt) } }
\newcommand{\UsltH}{\ensuremath{U^H_{{\qr}}(\slt) } }
\newcommand{\Ubar}{\bar{U}^H_{{\qr}}(\slt)}
\renewcommand{\H}{\ensuremath{\mathcal{H}}}
\newcommand{\End}{\operatorname{End}} 
\newcommand{\Hom}{\operatorname{Hom}}
\newcommand{\Id}{\operatorname{Id}}
\newcommand{\qdim}{\operatorname{qdim}} 
\renewcommand{\b}[1]{{\overline{#1}}}
\newcommand{\rot}{{\mathcal R}}
\newcommand{\qr}{{\xi}}
\newcommand{\qn}[1]{{\left\{#1\right\}}}
\newcommand{\Qn}[1]{{\left\langle#1\right\rangle}}
\newcommand{\Fn}[2]{{\mathsf{F}_{#1}\left(#2\right)}}
\newcommand{\qN}[1]{{\frac{\qn{#1}}{\qn 1}}}
\newcommand{\qb}[2]{{#1\brack#2}}
\newcommand{\J}{\operatorname{\mathsf{J}}} 
\newcommand{\ms}[1]{\mbox{\tiny$#1$}}
\newcommand{\mm}[1]{\mbox{\small$#1$}}
\newcommand{\mathsmall}[1]{\mbox{\tiny$#1$}}
\newcommand{\sjtop}[6]{\left|\begin{array}{ccc}#1 & #2 & #3 \\#4 & #5 &
      #6\end{array}\right|}
\newcommand{\sjv}[6]{\mathsmall{\left[\begin{array}{cc}#4 & #1\\#5 & #2 \\#6
        &#3 \end{array}\right]}}
\newcommand{\pic}[2]{
  \setlength{\unitlength}{#1}
  {\begin{array}{c} \hspace{-1.3mm}
        \raisebox{-4pt}{#2}
        \hspace{-1.9mm}\end{array}}}
\newcommand{\m}{{{r'}}}
\newcommand{\ro}{{{r}}}
\newcommand{\vst}{\mathtt v}
\newcommand{\e}{\operatorname{e}} 
\newcommand{\Xl}{{^-X}} 
\newcommand{\Xr}{{X^-}} 
\newcommand{\Xlr}{{\overline X}} 
\newcommand{\epsh}[2]
         {\begin{array}{c} \hspace{-1.3mm}
        \raisebox{-4pt}{\epsfig{figure=#1,height=#2}}
        \hspace{-1.9mm}\end{array}}
\newcommand{\Yv}[4]{\pic{1.4ex}{       
    \begin{picture}(2,3) 
      \qbezier(0.8,2)(0.4,2.5)(0,3)
      \qbezier(1.2,2)(1.6,2.5)(2,3)
      \put(1,1){\line(0,-1){1}}
      \qbezier(1, 2)(0.5, 2)(0.5, 1.5)
      \qbezier(0.5, 1.5)(0.5, 1.0)(1.0, 1.0)
      \qbezier(1.0, 1.0)(1.5, 1.0)(1.5, 1.5)
      \qbezier(1.5, 1.5)(1.5, 2.0)(1.0, 2.0)
      \put(0.55,1.2){\mathsmall{#1}}
    \end{picture}\,}_{#2}^{#3,#4}}
\newcommand{\Zv}[4]{\pic{1.4ex}{       
    \begin{picture}(2,3) 
      \qbezier(0.8,1)(0.4,0.5)(0,0)
      \qbezier(1.2,1)(1.6,0.5)(2,0)
      \put(1,3){\line(0,-1){1}}
      \qbezier(1, 2)(0.5, 2)(0.5, 1.5)
      \qbezier(0.5, 1.5)(0.5, 1.0)(1.0, 1.0)
      \qbezier(1.0, 1.0)(1.5, 1.0)(1.5, 1.5)
      \qbezier(1.5, 1.5)(1.5, 2.0)(1.0, 2.0)
      \put(0.55,1.2){\mathsmall{#1}}
    \end{picture}\,}^{#2}_{#3,#4}}
\newcommand{\Yn}[4]{\Yv{{\,\bullet}}{#2}{#3}{#4}
  \put(-2.2,0.1){\mathsmall{#1}}\,} 
\newcommand{\Zn}[4]{\Zv{{\,\bullet}}{#2}{#3}{#4}
  \put(-2.2,0.1){\mathsmall{#1}}\,} 
\begin{document}

\begin{abstract}    
  For a given $2r$\textsuperscript{th} root of unity $\qr$, we give explicit
  formulas of a family of $3$-variable Laurent polynomials $\J_{i,j,k}$ with
  coefficients in $\Z[\qr]$ that encode the $6j$-symbols associated with
  nilpotent representations of $\Uslt$.  For a given abelian group $\Gr$, we
  use them to produce a state sum invariant $\TV^r(M,L,h_1,h_2)$ of a
  quadruplet (compact $3$-manifold $M$, link $L$ inside $M$, homology class
  $h_1\in H_1(M,\Z)$, homology class $h_2\in H_2(M,\Gr)$) with values in a
  ring $R$ related to $\Gr$.  The formulas are established by a ``skein''
  calculus as an application of the theory of modified dimensions introduced
  in \cite{GPT}.
  For an oriented $3$-manifold $M$, the invariants are related to 
  $TV(M,L,\phi\in H^1(M,\C^*))$ defined in \cite{GPT2} from the category of
  nilpotent representations of ${U_{\qr}(\slt) }$.  They refine them as
  $TV(M,L,\phi)=\sum_{h_1} \TV^r(M,L,h_1,\tilde\phi)$ where $\tilde\phi$
  correspond to $\phi$ with the isomorphism $H_2(M,\C^*)\simeq H^1(M,\C^*)$.
\end{abstract}
\maketitle

\thanks{The work of N.\ Geer was partially supported by the NSF grant
  DMS-0706725. B. Patureau thanks the Department of Mathematics and Statistics
  at Utah State University for its hospitality}

\section*{Introduction}
The $6j$-symbols are tensors describing the associativity of the tensor
product in a tensor category.  Formulas exist for the classical and quantum
$6j$-symbols associated to the defining representations of $\slt$ and its
powers (see \cite{KR,MV}).  At a root of unity $\qr$, new representations
appear for the quantum group $\Uslt$.  There are essentially two new families:
the nilpotent and the cyclic representations.  Unlike the cyclic family, the
nilpotent representations can be enriched to form a ribbon category.  
For $\qr$ a fourth root of unity, this was already observed by O. Viro in
\cite{Vi}  who used these representations to construct a ribbon graph
invariant related to the multivariable Alexander polynomial.

The theory of modified dimensions developed with V. Turaev by the authors in
\cite{GPT,GPT2} produces a family of modified $6j$-symbols that share
properties similar with usual $6j$-symbol.  Nevertheless this family has a
very different nature than previously defined $6j$-symbols.  Indeed, the whole
family of nilpotent representation can be thought as a unique module with
parameters.  This module is then a non trivial one parameter deformation of
the so called Kashaev module.  For this reason, the $6j$-symbols can be
described by a finite set of parameterized functions and more precisely by a
family of $3$-variable Laurent polynomials $\J_{i,j,k}$ with coefficients in
$\Z[\qr]$.

These Laurent polynomials have wonderful properties.  They have some
symmetries (see \eqref{eq:Jsym}), satisfy a Biedenharn-Elliott type identity
(see \eqref{eq:JBE}) and an orthonormality relation (see \eqref{eq:Jon}).
These three identities, imply that from a triangulation of a $3$-manifold, one
can compute a state sum, that is a weighted sum of product of these $\J$
polynomials associated with the tetrahedra of the triangulation, which is a
topological invariant of $M$.  Furthermore, F. Costantino and J. Murakami
\cite{CM} show that the asymptotical behavior of these polynomial $6j$-symbols
is related to the volume of truncated tetrahedra.

The main substance of this paper, is the careful computation of the
$6j$-symbols associated with nilpotent representations of $\Uslt$.  This is
done in the third section.  But once the $6j$-symbols are identified with
certain values of the $\J$ polynomials, all the machinery of tensor category
can be forgotten.  This is what we want to highlight by the structure of this
document.  Hence the first part only defines the $\J$ polynomials and
announces their properties.  Here the tensor categories does not appear except
in the fact that we do not have, without them, a direct proof of the
identities.  The second part is a short exposition of how the polynomials can
be used to construct a Turaev-Viro type invariant, following and refining the
ideas of \cite{TV,GPT2}.

\section*{Thanks}
The authors would like to thank the referee for his careful reading of this
paper. 

\section{Polynomial $6j$-symbols}\label{S:J}
Fix a non-zero positive integer $\m$.  In this section, we define a set of
formal $6j$-symbols $\J^\m_{i_1,i_2,i_3}$ for $i_1,i_2,i_3\in \Z$ and give
some of their properties.  Since $\m$ is fixed, we write $\J_{i_1,i_2,i_3}$
for $\J^\m_{i_1,i_2,i_3}$.

Let $\N$ be the set of positive integers including zero.  Let $\ro=2{\m}+1$
and ${\qr}=\e^{im\pi/\ro}$ for $m$ coprime with $2\ro$.  Let
$\RL=\Z[{\qr}][q_1^{\pm1},q_2^{\pm1},q_3^{\pm1}]$ be the ring of Laurent
polynomials in three variables, with coefficients in $\Z[{\qr}]$.  We denote
with a bar the involutive ring automorphism of $\RL$ defined by $\b
{\qr}={\qr}^{-1},\b q_1=q_1^{-1},\b q_2=q_2^{-1}$ and $\b q_3=q_3^{-1}$.  For
any invertible element $X$ of a ring and $N\in\N$ let $\Qn{X}$ and $\Fn N X$
be analogues of quantum integer and quantum factorial, given by:
$$
\Qn{X}=X-X^{-1},\quad 
\Fn N X=\prod_{i=0}^{N-1}\Qn{{\qr}^iX}=\Qn X\Qn{{\qr}X}\cdots\Qn{{\qr}^{N-1}X}.
$$
Also, for $n, N \in\N$ and $i_1,i_2,i_3\in\{-\m,-\m+1,\cdots,\m\}$ such that
$n\leq N$ we set
$$\qn{n}=\Qn{{\qr}^n}={\qr}^n-{\qr}^{-n}\qquad\text{ and }\qquad
\qn N !=\qn1\qn2\cdots\qn N$$ 
$$
\qb N n=\frac{\qn N!}{\qn n!  \qn {N-n}!}\quad\text{ and }\quad
\qn{i_1,i_2,i_3}=\frac{\qn{2\m}!}{\qn {\m-i_1}! \qn {\m-i_2}!\qn {\m-i_3}!}
$$
Remark that $\Fn k{X^{-1}}=(-1)^k\Fn kX$ and  $\qn{2\m}!=(-1)^\m r$.
For $N=r$ notice that 
$$\Fn \ro X=\prod_{i=0}^{\ro-1}({\qr}^{i}X-{\qr}^{-i}X^{-1})=
\prod_{i=0}^{\ro-1}{\qr}^{i}X^{-1}(X^2-{\qr}^{-2i})$$
After multiplying the right hand side of this equation by $X^r$ we see that 
this polynomial has roots $\pm\qr^{-i}$ for $i=0,\cdots,{\ro-1}$, and so up 
to the sign $(-1)^{\m}=\qr^{\ro(\ro-1)/2}$ is equal to $X^{2\ro}-1$.  Thus, 
we have shown that   $\Fn
\ro X=\prod_{i=0}^{\ro-1}{\qr}^{i}X^{-1}(X^2-{\qr}^{-2i})=(-1)^{\m}\Qn{X^r}$.
Let us consider the finite set
$$
\H_\m=\big\{(i_1,i_2,i_3)\in\N: 
-\m\leq i_1,\,i_2,\,i_3,\,i_1+i_2+i_3\leq\m\big\}.
$$ 
One can easily show that $\operatorname{card}(\H_\m)=\frac13\ro(2\ro^2+1)$.
It can be useful to have in mind the action of the tetrahedral group $\Sym_4$
on $\H_\m$ by permuting $i_1,i_2,i_3$ and $i_4=-(i_1+i_2+i_3)$.

For all $(i_1,i_2,i_3)\in\H_\m$, 
we define a Laurent polynomial 
$$\J_{i_1,i_2,i_3}(q_1,q_2,q_3)\in \RL$$
as follows.\\
$\bullet$ If $i_1,i_3\leq i_1+i_2+i_3$ then let $N=\m-i_1-i_2-i_3$ and define
\begin{equation}
  \label{eq:J}
  \begin{array}{r}
    \J_{i_1,i_2,i_3}(q_1,q_2,q_3)=\qn{i_1,i_2,i_3}\Fn{i_2+i_3}{q_1
      \qr^{-i_3-\m}}\Fn{i_1+i_2}{q_3 {\qr}^{-i_2-\m}}\times
    \\[2ex] 
    \Bigg(\sum_{n=0}^N\qb Nn\Fn{N-n}{q_2\b q_1{\qr}^{i_3+\m+1}}\Fn{N-n}{q_2\b
      q_3{\qr}^{i_3+i_2-i_1-\m}} \times
    \\
    \Fn{n}{q_1\b q_2 {\qr}^{-2i_3-N}} \Fn{n}{q_3\b q_2{\qr}^{i_1+\m+1}}
    \Fn{{\m}-i_2}{q_2 {\qr}^{-i_1-\m-n}}\Bigg).
  \end{array}
\end{equation}
$\bullet$ If $i_2,i_3\geq i_1+i_2+i_3$ then let $N=\m+i_1+i_2+i_3$ and
define
\begin{equation}
  \label{eq:Ja}
  \begin{array}{r}
    \displaystyle{\J_{i_1,i_2,i_3}(q_1,q_2,q_3)=\frac{\Fn{{\m}+i_2-N}
      {q_3\b q_1{\qr}^{N-2i_2+1}}}{\qn{N}!}\Bigg(\sum_{n=0}^N\qb Nn
    \Fn{n}{q_2{\qr}^{-i_1+\m+1}} \times}
    \\[2ex]
    \displaystyle{\Fn{N-n}{q_2{\qr}^{-i_1-i_2+n+1}}
    \Fn{{\m}+i_3}{q_1\b q_2{\qr}^{N-n-2i_3+1}}
    \Fn{{\m}+i_1}{q_2\b q_3{\qr}^{n-2i_1+1}}\Bigg)}.
  \end{array}
\end{equation}
$\bullet$ For other $(i_1,i_2,i_3)\in\H_\m$, the polynomial $\J_{i_1,i_2,i_3}$
is obtained from Equation \eqref{eq:Jsym}, below.
\\[2ex]
The definition of these symbols come from the $6j$-symbols associated to
nilpotent representations of $\Uslt$ (see Definition \ref{D:6j} and Theorem
\ref{Th:formula}).  
Theorem \ref{Th:formula} shows that Equations \eqref{eq:J} and \eqref{eq:Ja}
agree when both conditions are satisfied.  We also extend the definition for
$(i_1,i_2,i_3)\in\Z^3\setminus\H_\m$ by $\J_{i_1,i_2,i_3}=0$.  Theorem
\ref{T:Laurent} implies that $\J_{i_1,i_2,i_3}(q_1,q_2,q_3)$ is an element of
$\RL$.

It is well known that the $6j$-symbols satisfy certain relations.  We use
Equation \eqref{E:J-sjv} to show the family of polynomials defined above
satisfy equivalent relations.  Indeed the theory of modified $6j$-symbols
developed in \cite{GPT2} shows that these identities are satisfied as
functions over some open dense subset of $\C^n$.  Therefore, since the
elements $\J_{*,*,*}$ are Laurent polynomials they satisfy the identities
formally.

Let us now discuss these relations.  Since the $6j$-symbols have 
tetrahedral symmetry we have
\begin{equation}
  \label{eq:Jsym}
  \J_{i_1,i_2,i_3}(q_1,q_2,q_3)=
  \J_{i_2,i_1,i_3}(\b q_2,\b q_1,\b q_3)
  =\J_{i_2,i_3,i_4}(q_1\b q_2{\qr}^{-2i_3},q_1\b q_3{\qr}^{2i_2},q_1)
\end{equation}
where $i_4=-i_1-i_2-i_3$.  These two equalities generate the 24 
symmetries of the tetrahedral group.  In particular, 
if $\sigma$ is permutation of the set $\{1,2,3\}$ then
$\J_{i_1,i_2,i_3}(q_1,q_2,q_3)=\J_{i_{\sigma(1)},i_{\sigma(2)},i_{\sigma(3)}}
(q_{\sigma(1)}^{\epsilon}, q_{\sigma(2)}^{\epsilon},
q_{\sigma(3)}^{\epsilon})$ where $\epsilon=\epsilon(\sigma)$ is the signature 
of $\sigma$.

The other relations involve a function called a modified dimension (see
\cite{GPT2}).  We introduce the following polynomial in $q_1$ which is a
formal analog of the inverse of this function:
\begin{equation}
  \label{eq:qD}
  \qD(q_1)=\Fn{2\m}{q_1\qr}= (-1)^\m\dfrac{{q_1^\ro}-{q_1^{-\ro}}}
  {{q_1}-{q_1}^{-1}}.  
\end{equation}

The $\J$ polynomials satisfy the Biedenharn-Elliott identity: For $x\in \Z$
let $\b{x}$ be the element of $\{-\m,-\m+1,\cdots,\m\}$ congruent to $x$
modulo $\ro$.  For any $i_1,i_2,i_3,i_4,i_5,i_6\in\Z$,
\begin{equation}
  \label{eq:JBE}
  \begin{array}{c}
    \J_{i_1,i_2,i_3}(q_1,q_2,q_3)
    \J_{i'_1,i'_2,i'_3}
    (q_0\qr^{2i_4}/q_1,q_0\qr^{2i_5}/q_2,q_0\qr^{2i_6}/q_3)=
    \\
    \displaystyle{
      \sum_{n=-\m
    }^{\m}}\frac{
  \J_{i_1,{i'_6},-i'_5}(q_0{\qr}^{2n},q_2,q_3)
  \J_{i_2,{i'_4},-i'_6}(q_0{\qr}^{2n},q_3,q_1) 
  \J_{i_3,{i'_5},-i'_4}(q_0{\qr}^{2n},q_1,q_2)}{\qD({q_0\qr^{2n}})}
  \end{array}
\end{equation}
where 
$\ms{\left\{  \begin{array}{l}
    i'_1=\b{-i_1+i_5-i_6}\\
    i'_2=\b{-i_2+i_6-i_4}\\
    i'_3=\b{-i_3+i_4-i_5}
  \end{array}\right.}$,   
$\ms{\left\{\begin{array}{l}
    i'_4=\b{i_4-n}\\
    i'_5=\b{i_5-n}\\
    i'_6=\b{i_6-n}
  \end{array}\right.}$, and $q_0$ is an independent variable.  
  
For any $(i_1,i_2,i_3)\in\H_\m$ and any $i'_1\in\Z$ the orthonormality relation
is expressed as
\begin{equation}
  \label{eq:Jon}
  \begin{array}{r}
  \displaystyle{
    \sum_{n=-\m}^{\m}\frac{
    \J_{i_1,\b{i_2-n},\b{i_3+n}}(q_1{\qr}^{2n},q_2,q_3)
    \J_{-i'_1,\b{n-i_2},\b{-i_3-n}}(\b q_1{\qr}^{-2n},\b q_2,\b q_3)}
  {\qD({q_2\b q_3{\qr}^{-2i_1}})\qD({q_1{\qr}^{2n}})}
  =\delta_{i_1,i_1'}}
  \end{array}
\end{equation}
where $\delta_{i_1,i'_1}$ is the Kronecker symbol.

\section{$3$-manifold invariant}
In this section we derive a set of topological invariant of links in closed
$3$-manifolds $M$ from the family $\J_{***}(q_1,q_2,q_3)$.  These invariants
are indexed by element of $H_1(M,\Z)$ and they refine the invariant
constructed in \cite[Section 10.4]{GPT2}.

Let $M$ be a closed $3$-manifold and $L$ a link in $M$.  
Here we follow the exposition of \cite{GPT2} inspired from \cite{BB}. A 
\emph{quasi-regular triangulation of $M$} is a decomposition of $M$ as a union
of embedded tetrahedra such that the intersection of any two tetrahedra is a
union (possibly, empty) of several of their vertices, edges, and
(2-dimensional) faces.  Quasi-regular triangulations differ from usual
triangulations in that they may have tetrahedra meeting along several
vertices, edges, and faces. Nevertheless, the edges of a quasi-regular
triangulation have distinct ends. A \emph{Hamiltonian link} in a quasi-regular
triangulation $\T$ is a set $\LL$ of unoriented edges of $\T$ such that every
vertex of $\T$ belongs to exactly two edges of $\LL$.  Then the union of the
edges of $\T$ belonging to $\LL$ is a link $L$ in $M$. We call the pair
$(\T,\LL)$ an \emph{$H$-triangulation} of $(M,L)$.

\begin{proposition}[\cite{BB}, Proposition 4.20]\label{L:Toplemma-}
  Any pair (a closed connected $3$-manifold $M$, a non-empty link
  $L\subset M$) admits an $H$-triangulation.
\end{proposition}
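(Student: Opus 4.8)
The plan is to prove that every pair consisting of a closed connected $3$-manifold $M$ and a non-empty link $L \subset M$ admits an $H$-triangulation, i.e. a quasi-regular triangulation $\T$ of $M$ together with a Hamiltonian subset $\LL$ of its edges whose union equals $L$. Since this is cited as Proposition 4.20 of \cite{BB}, I would reconstruct the standard argument rather than invent something new.

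\textbf{Overall approach.} First I would start from any triangulation of the pair $(M,L)$ in which $L$ is realized as a simplicial subcomplex (a subset of the edges). Such a triangulation exists by general PL or smooth triangulation theory: triangulate $M$ so that $L$ is a subcomplex, which is always possible for a tame link. The trouble is that this initial triangulation need not be \emph{quasi-regular} (edges might have coinciding endpoints, or two simplices might share their boundary in a degenerate way), and $L$ need not be \emph{Hamiltonian} (the link traverses only some vertices, and typically not exactly two $L$-edges meet each vertex). So the two obstructions to resolve are: (1) make the triangulation quasi-regular, and (2) arrange that $L$ passes through \emph{every} vertex, hitting each vertex in exactly two edges of $\LL$.

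\textbf{Key steps.} The main device I would use is barycentric subdivision together with local modifications. First, performing one or two barycentric subdivisions of the initial triangulation produces a quasi-regular triangulation: after barycentric subdivision all simplices are embedded, edges have distinct endpoints, and intersections of tetrahedra are unions of shared faces, so condition (1) is achieved; moreover $L$ remains a subcomplex, a union of edges. Second, to make $L$ Hamiltonian I would address the two failure modes. Vertices not lying on $L$ must be removed from the ``outside'' of $L$: this is done by a local move that pushes $L$ through a vertex, or alternatively by choosing the subdivision so that a path in the $1$-skeleton homotopic to $L$ sweeps through every vertex. The cleanest route is to take an arc in the $1$-skeleton connecting $L$ to each remaining vertex and to alter $L$ by a local isotopy (inside a single tetrahedron or a small ball, using an elementary Pachner-type subdivision move of the kind the paper calls $H$-Pachner) so that the new position of the link passes through that vertex along exactly two edges. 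Iterating over all vertices, and using connectedness of $M$ to guarantee that every vertex can be reached, yields a link isotopic to $L$ that meets each vertex in exactly two edges, which is precisely the Hamiltonian condition.

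\textbf{Main obstacle.} The hard part will be the bookkeeping in step two: ensuring that each local modification that forces $L$ through a new vertex neither destroys the quasi-regularity established in step one nor undoes the Hamiltonian property at vertices already treated. This requires that each move be supported in a small ball meeting the rest of the structure in a controlled way, and that the moves be carried out in an order (or simultaneously, vertex by vertex) so that the ``exactly two edges'' count is preserved globally. One must also verify that after all modifications the resulting $L$ is still isotopic to the original link in $M$, so that we are genuinely triangulating the given pair $(M,L)$; this uses that each elementary move is a subdivision or an isotopy supported in a ball and hence does not change the PL-isotopy class of $L$. Connectedness of $M$ is essential here, since it lets us route the link through the entire vertex set, and non-emptiness of $L$ guarantees there is a link to start the process with. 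Since all of this is worked out in detail in \cite{BB}, I would present the construction and cite that reference for the careful verification of the combinatorial invariants.
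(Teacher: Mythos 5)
The paper does not actually prove this proposition: it is quoted verbatim from \cite{BB} (Proposition 4.20) and used as a black box, so there is no internal argument to compare yours against. Your reconstruction is the standard one and matches what is done in \cite{BB}: triangulate $(M,L)$ with $L$ a subcomplex of the $1$-skeleton, subdivide to restore (quasi-)regularity, and then enlarge $\LL$ by local isotopies of the link across $2$-faces until every vertex carries exactly two edges of $\LL$. Since you, like the paper, ultimately defer the combinatorial verification to \cite{BB}, this is essentially the same approach.

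Two small points of precision you may want to tighten if you expand the sketch. First, the elementary move that absorbs a new vertex $v$ into $L$ is an edge-slide: replace an edge $[a,b]\in\LL$ by $[a,v]\cup[v,b]$ across a $2$-face $[a,b,v]$; this is an isotopy of $L$ supported in that triangle, it preserves the count of two $\LL$-edges at $a$ and at $b$, and it adds $v$ with exactly two $\LL$-edges. It is not one of the moves the paper's macros allude to as ``$H$-Pachner'' (those are $2$--$3$ moves on tetrahedra used later to prove invariance of the state sum, not to build the triangulation). Second, the genuine content you are deferring is the existence, for each vertex $v$ not yet on $L$, of such a $2$-face with one side already in $\LL$ --- this is where connectedness and further subdivision enter, and where one must check that subdivision does not break the Hamiltonian count at vertices already treated. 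Flagging that as the step carried out in \cite{BB} is appropriate, since the paper itself treats the proposition as imported.
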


The language of both triangulation and skeleton are useful here.  In
particular, it is convenient to use triangulation to give the notion of a
Hamiltonian link and skeleton to define the state sum.  A skeleton of $M$ is a
2-dimensional polyhedron $\P$ in $M$ such that $M\setminus \P$ is a disjoint
union of open 3-balls and locally $\P$ looks like a plane, or a union of 3
half-planes with common boundary line in $\R^3$, or a cone over the 1-skeleton
of a tetrahedron (see, for instance \cite{BB,Tu}).  A typical skeleton of $M$
is constructed from a triangulation $\T$ of $M$ by taking the union $\P_\T$ of
the 2-cells dual to its edges. This construction establishes a bijective
correspondence $\T\leftrightarrow \P_\T$ between the quasi-regular
triangulations $\T$ of $M$ and the skeletons $\P$ of $M$ such that every
2-face of $\P$ is a disk adjacent to two distinct components of $M\setminus
\P$ and no connected component of the 1-dimensional strata of $\P$ is a
circle. To specify a Hamiltonian link $\LL$ in a triangulation $\T$, we
provide some faces of $\P_\T$ with dots such that each component of
$M\setminus \P_\T$ is adjacent to precisely two (distinct) dotted faces. These
dots correspond to the intersections of $\LL$ with the 2-faces.


Let $R$ be a commutative ring with a morphism $\Z[\qr]\to R$.  We still denote
by $\qr$ the image of $\qr$ in $R$ and we assume that the group of
$2\ro$\textsuperscript{th} root of $1$ in $R$ is of order $2\ro$ generated by
$\qr$.  Let $R^\times$ be the group of units of $R$ and consider any subgroup
$\Gr$ of $\{x^\ro : x\in R^\times\}$, for example $(R,\Gr)=(\C,\C^*)$.
Clearly any element $x\in\Gr$ has exactly $\ro$ $\ro$\textsuperscript{th}
roots in $R$.  They form a set
$\sqq{x}=\{y\qr^{-2\m},\ldots,y,y\qr^2,\ldots,y\qr^{2\m}\}$ for some $y\in R$
such that $y^\ro=x$.  We call $(R,\Gr)$ a \emph{coloring pair}.

Let $(\T,\LL)$ be a $H$-triangulation of $(M,L)$.  Let $\P_\T$ be a skeleton
dual to $\T$.  The skeleton $\P_\T$ gives $M$ a cell decomposition $M_\P$.  So
a $n$-chain of cellular homology with coefficients in $\Gr$ can be represented
by a map from the oriented $n$-cells of $M_\P$ to $\Gr$.

By a \emph{$\Gr$-coloring} of $\T$ (or of $\P_\T$), we mean a $\Gr$-valued
$2$-cycle $\wp$ on $\P_\T$, that is a map from the set of oriented faces of
$\P_\T$ to $\Gr$ such that
\begin{enumerate}
\item the product of the values of $\wp$ on the three oriented faces adjacent
  to any oriented edge of $\P_\T$ is $1$,
\item $\wp(-f)=\wp(f)^{-1}$ for any oriented face $f$ of $\P_\T$, where $-f$ is
  $f$ with opposite orientation.
\end{enumerate}
Each $\Gr$-coloring $\wp$ of $\T$ represents a homology class $[\wp]\in
H_2(M,\Gr)$.  When $M$ is oriented, a $\Gr$-coloring of $\T$ can be seen as a
$1$-cocycle (a map on the set of oriented edges of $\T$, see \cite{GPT2}).  In
general, it can also be interpreted as a map on the set of co-oriented edges
of $\T$ but we prefer to adopt the point of view of $\P_\T$.

A {\it state} $\p$ of a $\Gr$-coloring $\wp$ is a map assigning to every
oriented face $f$ of $\P_\T$ an element $\p (f)$ of $\sqq{\wp(f)}$ such that
$\p(-f)=\p (f)^{-1}$ for all $f$.  The set of all states of $\wp$ is denoted
$\states(\wp)$.  A state $\p$ can also be seen as a $2$-chain on $\P_\T$ with
values in $R^\times$ but its boundary, the $1$-chain $\delta\p$ might not be
trivial.  Nevertheless, as $\p^\ro=\wp$, we have $(\delta\p)^\ro=1$.  We call
the height of $\p$ the unique map $h_\p$ assigning to every oriented edge $e$
of $\P_\T$ an element of $\{-\m,-\m+1,\ldots,\m\}$ such that
$(\delta\p)(e)=\qr^{2h_\p(e)}$.  It follows that modulo $\ro$, $h_\p$ is a
$1$-cycle on $\P_\T$.  In the case when $h_\p$ is also a $1$-cycle with integer
coefficients, let us denote its homology class by $[h_\p]\in H_1(M,\Z)$.  For
$h\in H_1(M,\Z)$, set $\states_{h}(\wp)=\{\p\in\states(\wp):\,\delta
h_\p=0\text{ and }[h_\p]=h\}$.

Given a $\Gr$-coloring $\wp$ of $(\T,\LL)$, we define a certain partition
function (state sum) as follows: For each vertex $x$ of $\P_\T$, fix a little
$3$-ball $B$ centered at $x$ whose intersection with $\P_\T$ is homeomorphic
to the cone on the $1$-skeleton of a tetrahedron.  The trace of $\P_\T$ on
$\partial B$ gives a triangulation of this sphere whose one skeleton is a
tetrahedron with four vertices $v_1,v_2,v_3,v_4$.  Let $f_1,f_2,f_3$ be the
regions of $\P_\T$ contained in the triangles $xv_2v_3$,$xv_3v_1$,$xv_1v_2$,
respectively
(see Figure \ref{F:PTlink}).  
\begin{figure}[t,b]
  \centering \hspace{10pt} $\epsh{fig20}{16ex}$ 
  \put(-51,2){\ms{x}}
  \put(-28,7){\ms{f_1}}\put(-77,1){\ms{f_2}}\put(-52,-20){\ms{f_3}} 
  \put(-106,-33){\mm{v_1}}\put(0,-16){\mm{v_2}}\put(-56,46){\mm{v_3}} 
  \put(-77,-15){\ms{e_1}}\put(-26,-9){\ms{e_2}}\put(-56,24){\ms{e_3}} 
\hspace{10pt}
  \caption{One side of $\P_\T$ near the vertex $x$}\label{F:PTlink}
\end{figure}
Also, let $e_1,e_2,e_3,e_4$ be the segments of $\P_\T$
contained in $xv_1$,$xv_2$,$xv_3$,$xv_4$, respectively.  The segment $xv_i$ is
oriented from $x$ to $v_i$ and induces an orientation on $e_i$.  Similarly,
the triangles above induce orientations on $f_1,f_2,f_3$.  For each $\p\in
\states(\wp)$, if $h_\p$ does not satisfy the cycle condition at $x$ (i.e. if
$\sum_ih_\p(e_i)\neq0$) we set $\J(\p,x)=0$, otherwise define
$$
\J(\p,x)=\J_{h_\p(e_1),h_\p(e_2),h_\p(e_3)}
\bigg(\p(f_1),\p(f_2),\p(f_3)\bigg)\in R.
$$
Equation \eqref{eq:Jsym} implies that $\J(\p,x)$ does not depend of the choice
of ordering of the vertices $v_1,v_2,v_3,v_4$.  For example, if one chooses
the ordering $v_2,v_1,v_3,v_4$ then $e_1$ and $e_2$ are exchanged, $xv_2v_3$
becomes $xv_1v_3$ and so $f_1$ becomes $-f_2$, etc... and the first equality
of \eqref{eq:Jsym} implies that the two expressions for $\J(\p,x)$ are equal.

We say that $g\in \Gr$ is \emph{admissible} if $\Qn g=g-g^{-1}$ is invertible
in $R$.  We call a $\Gr$-coloring $\wp$ \emph{admissible} if it takes
admissible values.  If $\p$ is a state of an admissible coloring $\wp$, and
$f$ is an unoriented face of $\P_\T$, then we define
$$\qd(\p,f)=\qD({g})^{-1}=\dfrac{(-1)^\m\Qn{g}}{\Qn{g^\ro}}\in R$$ 
where $g$ is $\p(\vec f)$ for any orientation $\vec f$ of $f$.  Note that
$\qd(\p,f)$ does not depend on the orientation of $f$, as
$\qD({g})=\qD({g}^{-1})$.

\begin{lemma}\label{L:admisColoring}
  Let $(R,G)$ be a coloring pair with the following property
  \begin{enumerate}
  \item \label{eq:Gadm} for all $ g_1,\ldots,g_n\in\Gr$ there exists $
    x\in\Gr$ such that $ xg_1,\ldots,xg_n$ are all admissible.
    \end{enumerate}
    Then for any {$H$-triangulation} $(\T,\LL)$ of $(M,L)$ and for any
    homology class $h_2\in H_2(M,G)$, there exists an admissible
    $\Gr$-coloring $\wp$ on $\T$ representing $h_2$.
\end{lemma}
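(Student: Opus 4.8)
The plan is to reduce the statement about homology classes to the combinatorial "Pachner/cycle" structure and then invoke the algebraic property (1) of the coloring pair. Fix the $H$-triangulation $(\T,\LL)$ with its dual skeleton $\P_\T$ and fix some $\Gr$-coloring $\wp_0$ representing the prescribed class $h_2 \in H_2(M,\Gr)$; such a $\wp_0$ exists because a $\Gr$-valued $2$-cycle on $\P_\T$ is exactly a cellular $2$-cycle, and every class in $H_2(M,\Gr)$ is represented by some cellular cycle. The only obstruction to using $\wp_0$ directly is that its values $\wp_0(f)$ on the finitely many faces need not all be admissible, i.e. the $\Qn{\wp_0(f)}$ may fail to be invertible in $R$. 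So the goal is to modify $\wp_0$ by a $2$-boundary, staying in the same homology class, until every face value becomes admissible.

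\emph{The key construction} is to produce a $\Gr$-valued $2$-coboundary (equivalently, a $2$-cycle homologous to zero) that shifts all face values simultaneously by a single scalar $x\in\Gr$. Concretely, I would take a chain homotopy supported so that, for a suitable choice of $x$, the new coloring $\wp$ satisfies $\wp(f) = x\,\wp_0(f)$ on every face $f$ of a chosen "sheet" while remaining a genuine $2$-cycle globally. The cleanest way to realize this is to use the dual description: on an oriented $3$-manifold a $\Gr$-coloring is a $1$-cocycle on $\T$, and adding the coboundary of a $0$-cochain that assigns $x$ to one vertex (and $1$ elsewhere) multiplies the cocycle value on each edge leaving that vertex by $x$. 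Iterating over all vertices, one checks that the face values can be multiplied by a common factor $x$ without changing the cohomology/homology class, because a coboundary is homologically trivial. Thus the family of colorings homologous to $\wp_0$ contains, for every $x\in\Gr$, a coloring whose face values are $\{x\,g_f\}_f$ up to the pattern forced by the cocycle condition.

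\emph{Now I would apply hypothesis} \eqref{eq:Gadm}: let $g_1,\dots,g_n$ enumerate the (finitely many) face values $\wp_0(f)$ appearing in $\wp_0$ (one per unoriented face). By property (1) there is $x\in\Gr$ with every $x g_j$ admissible, i.e. every $\Qn{x g_j}$ invertible in $R$. Using the construction of the previous paragraph, replace $\wp_0$ by the homologous coloring $\wp$ whose face values are these admissible elements $x g_j$. Since admissibility is a condition on the unoriented face value $g$ (and $\wp(-f)=\wp(f)^{-1}$ with $\Qn{g}=\Qn{g^{-1}}$), the resulting $\wp$ is an admissible $\Gr$-coloring, and $[\wp]=[\wp_0]=h_2$. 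This finishes the argument.

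\emph{The main obstacle} I expect is the bookkeeping in the second paragraph: ensuring that the "global multiplication by $x$" really is achievable by a homologically trivial modification respecting the local cycle conditions (1)–(2) at every edge of $\P_\T$, rather than only shifting values on a local patch and then fighting an inconsistency around non-trivial $1$-cycles. Phrasing the shift as the coboundary of a $0$-cochain (so that the $2$-cycle/$1$-cocycle condition is automatic) is what makes this clean, but one must take care in the non-oriented case, where the correct statement is in terms of co-oriented edges; there I would either pass to the oriented double cover locally or argue directly on $\P_\T$ that multiplying all faces in the coboundary of a vertex-supported function preserves condition (1). Once the shift is set up as a coboundary, the rest is immediate from hypothesis (1), so the real content is this single homological move.
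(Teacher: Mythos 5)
Your overall strategy (start from any cellular $2$-cycle representing $h_2$, then adjust by a $2$-boundary using property~(1)) is the right one, but the ``key construction'' in your second paragraph does not work. You propose to multiply \emph{all} face values simultaneously by a single $x\in\Gr$ via a coboundary. This is impossible for two reasons. First, the operation ``multiply every (oriented) face value by $x$'' is not even a $\Gr$-coloring in general: around each edge of $\P_\T$ three oriented faces meet with product of values equal to $1$, and after the global shift this product becomes a nontrivial power of $x$. Second, and more fundamentally, the modification you describe --- adding the coboundary of a $0$-cochain supported at one vertex and then ``iterating over all vertices'' --- collapses to nothing: the boundary of the $3$-chain (dually, the coboundary of the $0$-cochain) assigning the \emph{same} value $x$ to every $3$-cell is trivial, since each face receives $x$ from one adjacent $3$-ball and $x^{-1}$ from the other. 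So the family of colorings homologous to $\wp_0$ does \emph{not} contain one whose face values are $\{x\,g_f\}_f$ for your chosen $x$, and the final appeal to hypothesis~(1) has nothing to apply to.

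The repair is to work locally, one $3$-ball at a time, with possibly different $x$'s for different balls. Call a component $b$ of $M\setminus\P_\T$ \emph{bad} if some face oriented away from $b$ has a non-admissible value. For a bad $b$, apply property~(1) only to the finitely many values $\wp(f)$ on the faces adjacent to $b$, obtaining an $x$ making all of them admissible; then take the $3$-chain $c$ equal to $x$ on $b$ and $1$ elsewhere and replace $\wp$ by $(\delta c)\wp$. The geometric input that makes this work is quasi-regularity: every $2$-face of $\P_\T$ is adjacent to two \emph{distinct} components of $M\setminus\P_\T$, so $\delta c$ multiplies each face incident to $b$ by exactly $x^{\pm1}$ (not by $x\,x^{-1}=1$) and leaves every other face untouched. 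Since admissibility of $g$ is equivalent to that of $g^{-1}$, no previously good ball becomes bad, and the number of bad balls strictly decreases; iterating finishes the proof. This is exactly the argument the paper gives.
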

\begin{proof}
  Take any $G$-coloring $\wp$ of $\T$ representing $h_2$.    
  For $f$ an oriented face of $\P_\T$ and $-f$ the same face with opposite
  orientation, we have that $\Qn\wp(-f)=-\Qn\wp(f)$ and thus $\wp(-f)$ is
  admissible if and only if $\wp(f)$ is admissible.
  As mentioned above $M\setminus \P_\T$ is the disjoint union of open 3-balls.
  We say that a such a 3-ball $b$ is {\it bad} for $\wp$ if there is a
  oriented face $f$ in $\T$ incident to $b$ such that $\wp(f)$ is not
  admissible.
  It is clear that $\wp$ is admissible if and only if $\wp$ has no bad
  3-balls.  We show how to modify $\wp$ in its homology class to reduce the
  number of bad 3-balls.  Let $b$ be a bad 3-ball for $\wp$ and let $E_b$ be
  the set of all oriented faces of $\T$ which are oriented away from $b$.
  From Property \eqref{eq:Gadm} of the lemma, there exists $x\in \Gr$ such
  that $x\wp(f)$ is admissible for all $f\in E_b$.  Let $c$ be the
  $\Gr$-valued 3-chain on $M_\P$ assigning $x$ to $b$ and $1$ to all other
  3-balls (recall $M_\P$ is the cell decomposition of $M$ coming from
  $\P_\T$).  Taking the boundary of this 3-chain we obtain a $\Gr$-valued
  2-chain $\delta c$ on $M_\P$.  The 2-cycle $(\delta c)\wp$ on $\P_\T$ takes
  values 
  in $\{x\wp(f);(x\wp(f))^{-1}:f\in E_b\}$ 
  which are admissible on all faces of $\P_\T$ incident to $b$ and takes the
  same values as $\wp$ on all other faces of $\T$.
  Here we use the fact every 2-face of $\P_\T$ is a disk adjacent to two
  distinct components of $M\setminus \P_\T$.  The transformation $\wp \mapsto
  (\delta c)\wp$ decreases the number of bad 3-balls.  Repeating this
  argument, we find a 2-cycle without bad 3-balls.
\end{proof}

Let $\wp$ be an admissible $\Gr$-coloring of $\T$ and $h_1\in H_1(M,\Z)$.
Then we define
\begin{equation*}
  \TV(\T,\LL,h_1,\wp)=r^{-2N}\sum_{\p\in \states_{h_1} ( \wp)}\,\,
  \prod_{f\in \P_2\setminus \LL} \qd(\p,f)\,
  \prod_{x\in\P_0}\J(\p,x)\in R
\end{equation*}
where $\P_2\setminus \LL$ is the set of unoriented faces of $\P_\T$ without
dots, $\P_0$ is the set of vertices of $\P_\T$ and $N$ is the number of
connected component of $M\setminus\P_\T$ (that is the number of vertices in
$\T$).

When the coloring pair does not satisfy Property \eqref{eq:Gadm} of Lemma
\ref{L:admisColoring}, we explain how to perturb a non admissible
$\Gr$-coloring $\wp$: Consider the set $S$ of element of $R[X^{\pm1}]$ that
are monic polynomials in $X$ (i.e. Laurent polynomials whose leading
coefficient is $1$).  Then $S$ is a multiplicative set that does not contain
zero divisor, hence we can form $R'=S^{-1}R[X^{\pm1}]\supset R$.  Let $G'$ be
the multiplicative group generated by $G$ and $X^\ro$. Then $(R',G')$ is a
coloring pair with property \eqref{eq:Gadm} as any $\Qn{X^{k\ro}h}$ with $h\in
G$ and $k\in\Z^*$ is invertible in $R'$.  
Using the inclusion above we can view $\wp$ and $[\wp]$ as taking values in
$G'$.  Then Lemma \ref{L:admisColoring} implies there exists a $2$-boundary
$x$ with values in $G'$ such that 
$\wp'=x\wp$ is an admissible
$\Gr'$-coloring.  We say that $\wp'$ is a perturbation of $\wp$.

\begin{theorem} Let $L$ be a link in a $3$-manifold $M$, $(R,G)$ be a coloring
  pair and $(h_1,h_2)\in H_1(M,\Z)\times H_2(M,G)$. Choose any
  \emph{$H$-triangulation} $(\T,\LL)$ of $(M,L)$ and let $\wp$ be any
  admissible (or perturbation of a) $\Gr$-coloring representing $h_2$. Then
  $\TV_R(M,L,h_1,h_2)=\TV(\T,\LL,h_1,\wp)$ belongs to $R$ and it is an
  invariant of the 
  diffeomorphism class of the four-uple 
  \center{$(M,\, L,\, h_1\in H_1(M,\Z),\, h_2\in H_2(M,G))$}
\end{theorem}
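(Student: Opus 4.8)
The plan is to prove invariance by the standard strategy for state-sum invariants: show that the quantity $\TV(\T,\LL,h_1,\wp)$ is (i) independent of the chosen admissible coloring $\wp$ representing a fixed $h_2$, and (ii) independent of the $H$-triangulation $(\T,\LL)$, using a set of local moves connecting any two $H$-triangulations of the same pair $(M,L)$. First I would establish well-definedness of the summand: each factor $\J(\p,x)$ is already shown (via \eqref{eq:Jsym}) to be independent of the vertex ordering, and $\qd(\p,f)$ is orientation-independent, so the product over $\P_2\setminus\LL$ and $\P_0$ makes sense once a state $\p$ is fixed. Since the sum is restricted to $\states_{h_1}(\wp)$, I must also check that the condition $[h_\p]=h_1$ is meaningful and that the normalization $r^{-2N}$ is the correct power (one factor per vertex of $\T$) so that the bookkeeping balances under the moves.

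The core is step (ii). Because $\P_\T$ is a skeleton dual to the triangulation, the natural moves are the \Pachner{} (bistellar) moves on $\T$, which dualize to local rebuildings of $\P$; combined with a \lune{} (bubble) move and a move that slides the Hamiltonian dots, these suffice to connect any two $H$-triangulations of $(M,L)$ (this is the content behind Proposition~\ref{L:Toplemma-} and the Benedetti–Petronio framework of \cite{BB}). For each move I would write the two local state sums and match them: the $2$–$3$ Pachner move is exactly governed by the Biedenharn–Elliott identity \eqref{eq:JBE}, where the summation variable $n$ in \eqref{eq:JBE} becomes the height on the new internal edge and the factor $\qD(q_0\qr^{2n})^{-1}$ supplies precisely the $\qd$ weight of the new internal face; the bubble/lune move collapsing two tetrahedra is governed by the orthonormality relation \eqref{eq:Jon}, whose Kronecker $\delta$ forces the two collapsed heights to agree and whose denominators reproduce the $\qd$ factors of the disappearing faces. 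The independence of the auxiliary variable $q_0$ in \eqref{eq:JBE} is what lets the identity be applied regardless of the colors already present, and the $r^{-2}$ per vertex is matched against the change in vertex count under each move.

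For step (i) I would argue that changing $\wp$ by a $2$–boundary $\delta c$ (as in Lemma~\ref{L:admisColoring}) does not change the state sum: such a change rescales $\p$ on the faces bounding a given $3$-ball by a common $\ro$-th root of the value $x$ assigned to that ball, which permutes $\states_{h_1}(\wp)$ bijectively onto $\states_{h_1}((\delta c)\wp)$ while leaving each summand invariant, since every $\J_{i,j,k}$ and every $\qD$ is homogeneous under simultaneous rescaling of its $q$-arguments by the relevant pattern (again traceable to \eqref{eq:Jsym} and \eqref{eq:qD}). Any two admissible colorings representing the same $h_2$ differ by a $2$-boundary, so the value is independent of the representative; when Property~\eqref{eq:Gadm} fails, I would check that the perturbed value in $R'$ actually lies in $R$ and is independent of the perturbation, by showing it is a Laurent polynomial in $X$ whose value is insensitive to $X$ and then specializing.

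The main obstacle I expect is the careful bookkeeping in the Pachner-move computation: one must track orientations of the faces meeting each vertex, translate the heights $h_\p(e_i)$ and colors $\p(f_j)$ into exactly the argument slots appearing in \eqref{eq:JBE} and \eqref{eq:Jon} (including the $\qr^{2i}$ shifts and the $\b{\,\cdot\,}$ reductions mod $\ro$), and confirm that the global constraints $\sum_i h_\p(e_i)=0$ at each vertex together with the $2$-cycle condition on $\wp$ are preserved so that the restricted sum over $\states_{h_1}$ transforms correctly. In other words, the algebraic identities \eqref{eq:JBE} and \eqref{eq:Jon} do the real work, and the difficulty is purely in matching the local combinatorial data of the skeleton to the precise form of those identities, together with verifying that the height classes $[h_\p]$ behave consistently so that $h_1$ is genuinely preserved under each move.
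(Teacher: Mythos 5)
Your overall architecture (invariance under the elementary $H$-moves of \cite{BB} via \eqref{eq:JBE} and \eqref{eq:Jon}, plus independence of the coloring within its homology class, plus the perturbation trick) matches the paper's proof, but two of your steps contain genuine problems. The more serious one is your justification of step (i): you claim that changing $\wp$ by the boundary $\delta c$ of a $3$-chain induces a bijection of states under which \emph{each summand} is invariant, "since every $\J_{i,j,k}$ and every $\qD$ is homogeneous under simultaneous rescaling of its $q$-arguments." This is false. If $y^\ro=x\neq 1$, then $\qD(yq)\neq\qD(q)$ (the numerator of \eqref{eq:qD} picks up $x^{\pm1}$ while the denominator picks up $y^{\pm1}$), and $\J_{i,j,k}$ is not invariant under rescaling its arguments either: formula \eqref{eq:J} contains factors such as $\Fn{i_2+i_3}{q_1\qr^{-i_3-\m}}$ that depend on a single variable, and the symmetries \eqref{eq:Jsym} permute index data rather than assert homogeneity. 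Moreover the faces entering a given $\J(\p,x)$ need not all be adjacent to the modified $3$-ball, so they are not even rescaled uniformly. The invariance of the \emph{total} state sum under a $2$-boundary is a nontrivial statement (Lemmas 27 and 28 of \cite{GPT2}), and it is needed not only for step (i) but also to repair the sequence of colored $H$-moves when intermediate colorings fail to be admissible — a point your write-up omits entirely.

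The second gap concerns precisely the new content of this theorem relative to \cite[Theorem 22]{GPT2}: the refinement by $h_1$. You correctly identify that one must check that $[h_\p]$ is preserved under each move, but you leave it as an unresolved "difficulty." The paper closes this with a short locality argument: two states related by a local move differ only on a set of faces contained in a ball, hence their heights differ only on the set $E$ of edges adjacent to those faces, which lies in a simply connected part of the skeleton; if both states contribute (so $\delta h_\p=\delta h_{\p'}=0$), then $h_\p-h_{\p'}$ is a cycle supported in a simply connected set and therefore nullhomologous, so $[h_\p]=[h_{\p'}]$ and each $h_1$-graded piece of the state sum is separately invariant. Without this (or an equivalent) argument, you have only reproved the unrefined invariance of $\sum_{h_1}\TV(\T,\LL,h_1,\wp)$. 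Your treatment of the perturbation case is also vaguer than necessary — the paper's device is the ring endomorphism $X\mapsto X^2$ of $R'$, which fixes the state sum while fixing only $R$, forcing the value into $R$ — but that part is repairable along the lines you sketch.
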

\begin{proof}
  First, let us assume that $(R,G)$ satisfy \eqref{eq:Gadm} of Lemma
  \ref{L:admisColoring}, so there exists an admissible $\wp$ representing
  $h_2$.  In this case the proof is essentially the same as the proof of
  Theorem 22 in \cite{GPT2}.  Here we sketch the main steps:
  \begin{enumerate}[(I)]
  \item \label{I:Step1} In \cite{BB} it is shown that any two
    $H$-triangulation are related by a finite sequence of so called elementary
    $H$-moves.  One can then colors this sequence and makes it a sequence of
    ``colored $H$-moves.''
  \item One shows that the state sum $\TV(\T,\LL,h_1,\wp)$ is invariant under
    an elementary ``admissible colored $H$-move,'' i.e. an elementary $H$-move
    where the colors of the $H$-triangulation on both sides of the move are
    admissible.  The main point here is that Equation \eqref{eq:JBE} implies
    that if one performs a so called Pachner $2-3$ move (which consists in
    replacing in $\T$ two tetrahedra glued along a face with $3$ tetrahedra
    having a common edge) the state sum is unchanged.  Similarly, Equation
    \eqref{eq:Jon} imply the invariance of the state sum under the lune move
    which consists in removing two tetrahedra which have $2$ common faces and
    then gluing by pairs the orphan faces.

    Here the following observation makes the refinement with $h_1$ possible: if
    two states $\p,\p'$ of $\wp$ differ only on a set of faces then
    $h_\p,h_{\p'}$ differ only on the set $E$ of edges adjacent to these
    faces.  Assume that the set $E$ is included in a simply connected part of
    $\P_\T$.  Then if $\p$ and $\p'$ have nontrivial contributions in the
    state sum (which implies $\delta h_\p=\delta h_{\p'}=0$), we have
    $[h_\p]=[h_{\p'}]$ since $h_\p$ and $h_{\p'}$ are equal outside the simply
    connected space containing $E$.  Hence the colored $H$-moves do not modify
    the partial state sum associated to any homology class $h_1$.
  \item The $2$-cycles representing $h_2$ in the sequence of colored $H$-moves
    of Step \ref{I:Step1} are not necessarily always admissible
    $\Gr$-colorings.  However, using Equation \eqref{eq:Gadm} one can prove
    that $\TV(\T,\LL,h_1,\wp)$ depends only on the cohomology class of the
    admissible $\Gr$-coloring $\wp$ (see Lemmas 27 and 28 of \cite{GPT2}).
    This then allows us to modify a sequence of colored $H$-moves to a
    sequence of admissible colored $H$-moves such that the state sum is the
    same at each step, thus showing the theorem when $(R,G)$ satisfy
    \eqref{eq:Gadm} of Lemma \ref{L:admisColoring}.
  \end{enumerate}
  
  Let us now consider the case where the coloring pair does not satisfy
  property \eqref{eq:Gadm}.  We will prove that the perturbed state sum
  belongs to $R$.  Let $\wp'=x\wp$ be a perturbation of any $\Gr$-coloring
  $\wp$ representing $h_2$.  The idea is that the only component of $\wp'$
  which depends on $X\in R'$ is the boundary $\delta$ and as the state sum
  depend of the coloring only up to a boundary, it does not depend on $X$.  To
  be more precise, let $\rho{\colon\thinspace}R'\to R'$ be the ring morphisms
  which is the identity on $R$ and sends $X$ to $X^2$.  Then
  $\wp''=\rho(\wp')$ is also an admissible $R'$-coloring of $\T$.  Moreover,
  $\rho(x)$ is a boundary and $\rho(\wp)=\wp$, hence $\wp'/\wp''$ is a
  boundary.  But from above we know that two admissible colorings representing
  the same homology class give equal state sums.  Thus,
  $\TV(\T,\LL,h_1,\wp')=\TV(\T,\LL,h_1,\wp'')=\rho(\TV(\T,\LL,h_1,\wp'))$
  which implies that $\TV(\T,\LL,h_1,\wp)\in R$.
\end{proof}

We also define
\begin{equation*}
  \TV_R(M,L,h_2)=\sum_{h_1\in H_1(M,\Z)} \TV_R(M,L,h_1,h_2)\in R
\end{equation*}
This sum is finite because $\TV_R(M,L,h_1,h_2)=0$ 
for all but finitely many $h_1$.  Indeed, if an admissible coloring $\wp$ represent
$h_2$ then we have $\states(\wp)=\bigcup_{h_1}\states_{h_1}(\wp)$ is finite
and thus $\states_{h_1}(\wp)=\emptyset$ for all but finitely many $h_1\in
H_1(M,\Z)$.

The first fundamental example is obtained when $(R,G)=(\C,\C^*)$.  It is easy
to see that this coloring pair satisfies Property \eqref{eq:Gadm} of
Lemma~\ref{L:admisColoring} since $1,-1\in\C^*$ are the only non admissible
elements.  In this case, if $M$ is oriented we denote the Poincar\'e dual of
$h_2\in H_2(M,\C^*)$ by $h_2^*\in H^1(M,\C^*)$.  Then
$$\TV_\C(M,L,h_2)=TV(M,L,h_2^*)$$
where $TV(M,L,h_2^*)$ is the invariant defined in \cite[Section 10.4]{GPT2}.

We now consider a universal example: Let $H=H_1(M,\Z)$, and assume that $M$ is
oriented so that for any abelian group $\Gr$ we have $H_2(M,\Gr)\simeq
H^1(M,\Gr)\simeq \Hom(H,\Gr)$.  Then $H_2(M,H)$ has a particular universal
element $\eta$ whose image in $\Hom(H,H)$ is the identity.  We will assume
that the order of the torsion of $H$ is coprime with $r$.  Then multiplication
by $r$ is an injective morphism $m_r{\colon\thinspace}H\to H$.  Denote the image of $m_r$ by
$rH$, then $(\Z[\qr][H],rH)$ is a coloring pair and we consider
$\TV(M,L,r\eta)\in\Z[\qr][H]$.
\begin{proposition} 
  The invariant $\TV(M,L,r\eta)$ takes values in $\Z[\qr][rH]$ making it
  possible to define
  $$\TV(M,L)=m_r^*(\TV(M,L,r\eta))\in\Z[\qr][H].$$
  Then for any pair $(R,G)$ as above and any $\psi\in
  \Hom(H,\Gr)$ we have 
  $$\TV(M,L,\bar\psi)=\psi_*(\TV(M,L))$$
  where $\bar\psi$ is the image of $\psi$ in $H_2(M,\Gr)$.
\end{proposition}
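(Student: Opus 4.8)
The plan is to deduce both assertions from the \emph{naturality} of the state sum $\TV(\T,\LL,h_1,\wp)$ under ring homomorphisms, together with a character argument that pins down its support in the group ring. Fix once and for all an \emph{$H$-triangulation} $(\T,\LL)$ of $(M,L)$ with skeleton $\P_\T$. Since $M$ is oriented, the isomorphism $H_2(M,\Gr)\simeq\Hom(H,\Gr)$ of the statement is natural in $\Gr$, so I may choose an $H$-valued $2$-cocycle $\tilde\wp$ on $\P_\T$ representing $\eta$ (as a homomorphism $H_1(M,\Z)=H\to H$ it is the identity). Then $r\tilde\wp$ takes values in $rH$ and represents $r\eta$, so with the coloring pair $(R,\Gr)=(\Z[\qr][H],rH)$ (computing via a perturbation, since this pair need not satisfy \eqref{eq:Gadm} of Lemma~\ref{L:admisColoring}) we have $\TV(M,L,r\eta)=\sum_{h_1}\TV(\T,\LL,h_1,r\tilde\wp)$.

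The key lemma I would isolate is the following: if $(R,\Gr),(R',\Gr')$ are coloring pairs and $\Phi\colon R\to R'$ is a ring homomorphism with $\Phi(\qr)=\qr$ and $\Phi(\Gr)\subseteq\Gr'$, then for every admissible $\Gr$-coloring $\wp$ one has $\Phi(\TV(\T,\LL,h_1,\wp))=\TV(\T,\LL,h_1,\Phi\circ\wp)$. The proof is formal: because the group of $2r$-th roots of unity has order $2r$ in both $R$ and $R'$, the element $\qr^2$ has order $r$, so $\Phi$ carries $\sq{\wp(f)}$ bijectively onto $\sq{\Phi(\wp(f))}$ for each face $f$; this induces a height-preserving bijection $\states_{h_1}(\wp)\to\states_{h_1}(\Phi\wp)$, $\p\mapsto\Phi\circ\p$. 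Since $\J$ has coefficients in $\Z[\qr]$, and the weights $\qd(\p,f)$ and the prefactor are built from $\Qn{\cdot}$ and $r$, all of which $\Phi$ fixes or commutes with, applying $\Phi$ term-by-term and reindexing by the bijection gives the identity. (One checks that a perturbation on one side transports to the other, so the identity descends to the well-defined invariants.)

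I would then prove the support statement $\TV(M,L,r\eta)\in\Z[\qr][rH]$. For each character $\chi\colon H\to\mu_r=\langle\qr^2\rangle$, the assignment $h\mapsto\chi(h)h$, $\qr\mapsto\qr$ defines a ring automorphism $\alpha_\chi$ of $\Z[\qr][H]$ that fixes $rH$ pointwise (as $\chi(h)^r=1$), hence $\alpha_\chi\circ(r\tilde\wp)=r\tilde\wp$; the lemma then gives $\alpha_\chi(\TV(M,L,r\eta))=\TV(M,L,r\eta)$. An element of $\Z[\qr][H]$ fixed by every $\alpha_\chi$ is supported on $\bigcap_\chi\ker\chi$, and because the torsion of $H$ is coprime to $r$ this intersection is exactly $rH$. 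This proves the first assertion and makes $\TV(M,L)=m_r^*(\TV(M,L,r\eta))$ well defined.

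Finally, for the universality $\psi_*(\TV(M,L))=\TV(M,L,\bar\psi)$, the decisive move is to lift $\psi$ through the $r$-th power map. Since $\Gr'\subseteq\{x^r:x\in R'^{\times}\}$, I can pass to a coloring pair $(R'',\Gr'')\supseteq(R',\Gr')$ and a homomorphism $\tilde\psi\colon H\to R''^{\times}$ with $\tilde\psi(h)^r=\psi(h)$ for all $h$: on a free part choose $r$-th roots and extend multiplicatively, and on the torsion $T$ set $\tilde\psi(t)=\psi(t)^s$ with $rs\equiv1\pmod{|T|}$, which exists as $\gcd(r,|T|)=1$. The induced ring map $\tilde\psi_*\colon\Z[\qr][H]\to R''$ satisfies $\tilde\psi_*\circ(r\tilde\wp)=\psi\circ\tilde\wp$, a $\Gr'$-coloring representing $\psi_*\eta=\bar\psi$; summing the lemma over $h_1$ (the computation taking place in $R'$, since this coloring and its states are $R'$-valued) yields $\tilde\psi_*(\TV(M,L,r\eta))=\TV(M,L,\bar\psi)$. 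On the other hand $\tilde\psi_*$ restricted to $\Z[\qr][rH]$ sends $h^r\mapsto\psi(h)$ independently of the chosen roots, i.e.\ it coincides there with $\psi_*\circ m_r^*$; by the support statement this gives $\tilde\psi_*(\TV(M,L,r\eta))=\psi_*(\TV(M,L))$, and comparing the two evaluations proves the identity. The main obstacle is exactly that the coloring pair is forced to use $rH$ and the class $r\eta$ (a generic element of $H$ is not an $r$-th power of a unit): a naive application of naturality to $\psi_*$ alone produces $\TV(M,L,r\bar\psi)$ rather than $\TV(M,L,\bar\psi)$, and it is the combination of the support lemma (which legitimizes $m_r^*$) with the root-lift $\tilde\psi$ that absorbs this extra factor of $r$.
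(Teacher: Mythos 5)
Your argument is correct, and its overall strategy coincides with the paper's: invariance of the state sum under ring endomorphisms of $\Z[\qr][H]$ fixing the coloring gives the support statement, and functoriality under change of coloring pair gives the evaluation formula. The differences are in the two places where the paper is most compressed, and both of your additions are worthwhile. For the support claim the paper twists a free generator by $\rho_i(\e^{x_i})=\qr\,\e^{x_i}$ and asserts that the set of states is preserved; since $\qr$ has order $2\ro$, this $\rho_i$ does not fix $rH$ pointwise (it sends $\e^{\ro x_i}$ to $\qr^{\ro}\e^{\ro x_i}=-\e^{\ro x_i}$), so it does not literally fix the coloring representing $r\eta$. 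Your reformulation with characters $\chi\colon H\to\langle\qr^2\rangle$, whose automorphisms $\alpha_\chi$ genuinely fix $rH$ and hence permute $\states_{h_1}$ of the given coloring while preserving heights, is the clean way to run the same argument, and your identification $\bigcap_\chi\ker\chi=rH$ uses the coprimality of $\ro$ with $|\Tor(H)|$ exactly where the paper's hypothesis is needed. For the last identity the paper offers only ``$\psi_*(\eta)=\psi$''; your observation that naive naturality applied to $\psi_*$ produces $\TV(M,L,r\bar\psi)$, and that one must instead lift $\psi$ to an $\ro$-th root $\tilde\psi$ (possible on torsion because $\gcd(\ro,|\Tor(H)|)=1$) so that $\tilde\psi_*$ restricted to $\Z[\qr][rH]$ coincides with $\psi_*\circ m_r^*$, is precisely the content hidden in that one sentence, and making it explicit is a genuine improvement. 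The one step to keep an eye on is your naturality lemma: it is stated for admissible colorings, and the pair $(\Z[\qr][H],rH)$ is never admissible (no $h^{\ro}-h^{-\ro}$ is a unit of the group ring), so the reduction through perturbations that you mention in passing is actually carrying weight there; the paper relies on the same reduction silently.
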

\begin{proof}
  First, let us show that for each $h_1\in H_1(M,\Z)$ we have
  $\TV(M,L,h_1,r\eta)\in\Z[\qr][rH]$.  We choose a base of the free part of
  $H$; that is we write $H=\Tor(H)\oplus \Z x_1\oplus \cdots\oplus \Z x_k$.
  Then define the ring morphism $\rho_i{\colon\thinspace}\Z[\qr][H]\to
  \Z[\qr][H]$ as the identity on this basis except that $\rho_i(\e^{x_i})=\qr
  \e^{x_i}$.  Clearly, the set of states and thus the state sum is invariant
  by $\rho_i$ for any $i$ and thus belongs to $\Z[\qr][rH]$.  The last point
  follows from the fact that $\psi_*(\eta)=\psi$.
\end{proof}

\begin{remark}
  Suppose that $\T$ is not a quasi-regular triangulation but a generalized
  triangulation where some edges might be loops.  
  Then not all homology classes of $H_{2}(M,G)$ can be represented  
  by admissible colorings  on $\T$.  

  Nevertheless, suppose that an admissible coloring $\wp$ is given on $\T$.
  Then one can prove that the state sum $\TV(\T,\LL,h_1,\wp)$ as above is
  still equal to the invariant $TV(M,L,h_1,[\wp])$.  This might be useful for
  effective computations.  This can be proven using the fact that up to
  perturbing the coloring, the triangulation $\T$ can be transformed into a
  quasi-regular one by a sequence of elementary moves such that at each step,
  the locally modified coloring is admissible.
\end{remark}

\section{Skein calculus}
\subsection{The category $\catd$ of $\UsltH$ weight modules}
For $x\in \C$ we extend the notation ${\qr}^x$ by setting 
${\qr}^x=\e^{im\pi x/\ro}$.  Also, if $(\alpha,k)\in\C\times\N$,
$$\qn{\alpha}={\qr}^\alpha-{\qr}^{-\alpha}\quad\text{ and }\quad \qn
{\alpha;k}!=\Fn k{{\qr}^\alpha}=\qn\alpha\qn{\alpha+1}\cdots\qn{\alpha+k-1}.
$$ 
Many computations in this section use the identity: 
$$\qn{x+z}\qn{y+z}-\qn{x}\qn{y}=\qn{x+y+z}\qn{z}$$

Let $\UsltH$ be the ``unrolled'' quantization of $\slt$, i.e. the $\C$-algebra
with generators $E, F, K, K^{-1},H$ and the following defining relations:
\begin{equation*}
  KK^{-1} =K^{-1}K=1,  \,  KEK^{-1} =\qr^2E, \,  KFK^{-1}=\qr^{-2}F,\,
\end{equation*}
\begin{equation*}
  HK=KH,\, [H,E]=2E,\, [H,F]=-2F,\, [E,F] =\frac{K-K^{-1}}{\qr-\qr^{-1}}.
\end{equation*}
This algebra   is a Hopf algebra with   coproduct $\Delta$, counit
$\varepsilon$, and antipode $S$   defined by the formulas
\begin{align*}
  \Delta(E)&= 1\otimes E + E\otimes K, 
  &\varepsilon(E)&= 0, 
  &S(E)&=-EK^{-1}, 
  \\
  \Delta(F)&=K^{-1} \otimes F + F\otimes 1,  
  &\varepsilon(F)&=0,& S(F)&=-KF,
  \\
  \Delta(K)&=K\otimes K
  &\varepsilon(K)&=1,
  & S(K)&=K^{-1},
  \\
  \Delta(H)&=H\otimes 1 + 1 \otimes H, 
  & \varepsilon(H)&=0, 
  &S(H)&=-H.
\end{align*}

Following \cite{GPT}, we define $\Ubar$ to be the quotient of $\UsltH$ by the
relations $E^{\ro}=F^{\ro}=0$.  It is easy to check that the operations above
turn $\Ubar$ into a Hopf algebra.  

Let $V$ be a $\Ubar$-module.  An eigenvalue $\lambda\in \C$ of the operator
$H{\colon\thinspace}V\to V$ is called a \emph{weight} of $V$ and the
associated eigenspace $E_\lambda(V)$ is called a \emph{weight space}.  We call
$V$ a \emph{weight module} if $V$ is finite-dimensional, splits as a direct
sum of weight spaces, and $\qr^H=K$ as operators on $V$.

Let $\catd$ be the tensor category of weight $\Ubar$-modules. By Section 6.2
of \cite{GPT}, $\catd$ is a ribbon Ab-category with ground ring $\C$.  The
braiding $c_{V,W}{\colon\thinspace}V\otimes W \to W \otimes V$ on $\catd$ is
defined by $v\otimes w \mapsto \tau(R(v\otimes w))$ where $\tau$ is the
permutation $x\otimes y\mapsto y\otimes x$ and $R$ is the operator of
$V\otimes W$ defined by
\begin{equation}
  \label{eq:R}
  R=\qr^{H\otimes H/2} \sum_{n=0}^{\ro-1} \frac{\{1\}^{2n}}{\{n\}!}\qr^{n(n-1)/2}
  E^n\otimes F^n.
\end{equation}
The inverse of the twist on a weight module $V$ is given 
%
%
by the operator 
\begin{equation}
  \label{eq:twist}
\theta_V^{-1}=K^{\ro-1}\qr^{-H^2/2}\sum_{n=0}^{\ro-1}
(-1)^{n}\frac{\{1\}^{2n}}{\{n\}!}\qr^{3n(n-1)/2} F^nK^{-n}E^n
\end{equation}
(also see \cite[Chapter 4.5]{Oh} where this formula is given 
with $\zeta=\qr^2$ instead of $\qr$)

For an isomorphism classification of simple weight modules over the usual
quantum $\slt$, see for example \cite{Kas}, Chapter~VI.  This classification
implies that simple weight $\Ubar$-modules are classified up to isomorphism by
highest weights. For $\alpha\in \C$, we denote by $V_{\alpha}$ the simple
weight $\Ubar$-module of highest weight $\alpha+{\ro}-1$. This notation
differs from the standard labeling of highest weight modules. Note that
$V_{-{\ro}+1}=\C$ is the trivial module and $V_0$ is the so called Kashaev
module.

The well-known Reshetikhin-Turaev construction defines a $\C$-linear functor
$F$ from the category of $\catd$-colored ribbon graphs with coupons to
$\catd$.  Let $B=(\C\setminus\Z)\cup \ro\Z$.  The modules
$\{V_\alpha\}_{\alpha\in B}$ are called typical and all have dimension
$\ro=2\m+1$.  Note that $F$ is trivial on all closed $\catd$-colored ribbon
graph that have at least one color in $B$.  In \cite{GPT}, the definition of
$F$ is extended to a non-trivial map $F'$ defined on closed $\catd$-colored
ribbon graphs with at least one edge colored by a typical module.  Let us
recall how one can compute $F'$.  If $T\subset\R\times[0,1]$ is a
$\catd$-colored (1-1)-tangle with the two ends colored by the same typical
module $V_\alpha$, we can form its ``braid closure'' $\hat T$.  Then we say
that $T$ is a \emph{cutting presentation} of the closed $\catd$-colored
ribbon graph $\hat T$.  In this situation, $F(T)$ is an endomorphism of
$V_\alpha$ that is a scalar.  Then $F'(\hat T)$ is this scalar multiplied by
the modified dimension of $V_\alpha$ which is given by
$$
\qd(V_\alpha)=(-1)^{\m}\frac{\qn{\alpha}}{\qn{\ro\alpha}}
=\prod_{k=1}^{2{\m}}\frac1{\qn{\alpha+k}}.
$$
It can be shown that $F'(\hat T)$ does not depend on the cutting presentation
$T$ of $\hat T$ (see \cite{GPT}).

For $\alpha\in B$ let us consider the basis of $V_\alpha$ given by
$(v_i=F^iv_0)_{i=0..2{\m}}$ where $v_0$ is a highest weight vector of
$V_\alpha$.  Then the $\UsltH$-module structure of $V_\alpha$ is given by:
$$
H.v_i=(\alpha+2({\m}-i)) v_i,\quad E.v_i= \frac{\qn i\qn{i-\alpha}}{\qn1^2}
v_{i-1} ,\quad F.v_i=v_{i+1}.
$$ 
\begin{remark}
  The family of module indexed by $B$ can be seen as a vector bundle
  $\E\twoheadrightarrow B$ on which elements of $\UsltH$ act by continuous
  linear transformations.  Then the $v_i$ are sections of this vector bundle
  that form a trivialization $\E\simeq B\times\C^\ro$.  In fact one can extend
  $\E$ to a unique vector bundle $\E'$ over $\C\supset B$ with an action of
  $\UsltH$ but the fiber over $k\in\Z\setminus \ro\Z$ is not an irreducible
  module.
\end{remark}

Let $\vst$ be the $2$-dimensional simple weight $\UsltH$-module of highest
weight $1$ and basis $(v_0,v_1)$ with $E.v_1=v_0$ and $v_1=F.v_0$.  The
categorical dimension of $V_\alpha$ is 
zero, while that of $\vst$ is equal to $\qdim(\vst)
=\qN{-2}=-{\qr}-{\qr}^{-1}$.

\subsection{Duality in $\catd$}\ \\
As in \cite{GPT2}, the ribbon structure of $\catd$ induce the existence of
functorial left and right duality given by $V^*=\Hom_\C(V,\C)$ and the
morphisms
\begin{align*}\label{E:DualityForCat}
  b_{V} :\, & \C \rightarrow V\otimes V^{*} \text{ is given by } 1 \mapsto
  \sum
  v_j\otimes v_j^*,\notag\\
  d_{V}:\, & V^*\otimes V\rightarrow \C \text{ is given by }
  f\otimes w \mapsto f(w),\notag\\
  d_{V}':\, & V\otimes V^{*}\rightarrow \C \text{ is given by } v\otimes f
  \mapsto f(K^{1-{\ro}}v),\notag
  \\
  b_V':\, & \C \rightarrow V^*\otimes V \text{ is given by } 1 \mapsto \sum
  v_j^*\otimes K^{{\ro}-1}v_j.
\end{align*}

For $\alpha\in B$, the classification of simple modules implies that
$V_{-\alpha}^*$ is isomorphic to $V_\alpha$. We consider the isomorphism
$w_\alpha{\colon\thinspace}V_\alpha\to V_{-\alpha}^*$ given by
$$v_i\mapsto -{\qr}^{i^2-1-i\alpha}v_{2{\m}-i}^*.$$
The isomorphism $w_\alpha$ is the unique map up to a scalar that sends $v_0$
to $-\frac1{\qr}v_{2{\m}}^*$ and $v_i=F^iv_0$ to $-\frac1{\qr}v_{2{\m}}^*\circ
(-KF)^i=-\frac1{\qr}v_{2{\m}}^*\circ ((-K)^i{\qr}^{i(i-1)}F^i)=
-(-1)^i{\qr}^{i^2-i-1+i(-\alpha-2{\m})}v_{2{\m}-i}^*=
-{\qr}^{i^2-1-i\alpha}v_{2{\m}-i}^*$.  Let
$w_\vst=w_{1-2{\m}}{\colon\thinspace}\vst\stackrel{\sim}{\to}\vst^*$ be the
isomorphism given by $v_0\mapsto -{\qr}v_1^*$ and $v_1\mapsto v_0^*$.

\begin{lemma}
  For $\alpha\in B$, one has
    \begin{equation}\label{E:d-andw}
    d_{V_{\alpha}}(w_{{-\alpha}} \otimes
    \Id_{V_{\alpha}})=d'_{V_{{-\alpha}}}(\Id_{V_{{-\alpha}}}\otimes
    w_{\alpha})
    \end{equation}
     and similarly
    $d_\vst(w_{1-2{\m}}\otimes\Id_\vst)=d'_\vst(\Id_{\vst}\otimes
    w_{1-2{\m}})$.
\end{lemma}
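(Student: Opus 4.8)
The plan is to prove equality of these two morphisms by evaluating both sides on the standard basis, since each side is a linear map out of $V_{-\alpha}\otimes V_\alpha$ (respectively $\vst\otimes\vst$) into $\C$: indeed $w_{-\alpha}\otimes\Id_{V_\alpha}$ lands in $V_\alpha^*\otimes V_\alpha$ and is followed by $d_{V_\alpha}$, while $\Id_{V_{-\alpha}}\otimes w_\alpha$ lands in $V_{-\alpha}\otimes V_{-\alpha}^*$ and is followed by $d'_{V_{-\alpha}}$, so both composites have the same source and target. Write $\{v_j\}$ for the standard basis of $V_\alpha$ and $\{v'_j\}$ for that of $V_{-\alpha}$, and test both sides on $v'_a\otimes v_b$.

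First I would compute the left-hand side. Applying the explicit formula for $w_{-\alpha}$ gives $w_{-\alpha}(v'_a)=-\qr^{a^2-1+a\alpha}\,v_{2\m-a}^*$, and then $d_{V_\alpha}$ evaluates this functional on $v_b$, producing $-\qr^{a^2-1+a\alpha}\,\delta_{2\m-a,b}$. Thus the left-hand side is supported exactly on the locus $a+b=2\m$, where its value is $-\qr^{a^2-1+a\alpha}$. Next I would compute the right-hand side: applying $w_\alpha$ to $v_b$ gives $-\qr^{b^2-1-b\alpha}\,(v'_{2\m-b})^*$, and the definition of $d'$ forces the insertion of $K^{1-\ro}$, so I must evaluate this functional on $K^{1-\ro}v'_a$. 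Using $\qr^H=K$ together with the weight $H.v'_a=(-\alpha+2(\m-a))v'_a$ on $V_{-\alpha}$, I obtain $K^{1-\ro}v'_a=\qr^{(1-\ro)(-\alpha+2(\m-a))}v'_a$, so the right-hand side equals $-\qr^{b^2-1-b\alpha}\,\qr^{(1-\ro)(-\alpha+2(\m-a))}\,\delta_{2\m-b,a}$, again supported precisely on $a+b=2\m$.

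The crux is then to match the two exponents of $\qr$ on this common support, and this is the one genuine computation I expect to be the only bookkeeping obstacle. Substituting $b=2\m-a$ and using $\ro=2\m+1$, i.e.\ $1-\ro=-2\m$, one checks that the factor $\qr^{(1-\ro)(-\alpha+2(\m-a))}$ carries exactly the exponent $(a^2-1+a\alpha)-(b^2-1-b\alpha)$, so the right-hand scalar $-\qr^{b^2-1-b\alpha}\cdot\qr^{(1-\ro)(-\alpha+2(\m-a))}$ collapses to $-\qr^{a^2-1+a\alpha}$, agreeing with the left-hand side. Conceptually, the twist factor $K^{1-\ro}$ built into $d'$ is precisely what compensates for the fact that $w_\alpha$ and $w_{-\alpha}$ differ by the sign of $\alpha$ in their exponents.

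Finally, for the $\vst$ statement I would run the identical evaluation on the two-dimensional basis $(v_0,v_1)$ using $w_{1-2\m}$, where the general computation degenerates to two scalar checks. On the pair $v_1\otimes v_0$ both sides give $1$ once one uses $\qr^\ro=\e^{im\pi}=(-1)^m=-1$ (valid since $m$ is coprime with $2\ro$, hence odd), and on the pair $v_0\otimes v_1$ both sides give $-\qr$; here the needed scalar relation is $\qr^{-2\m}=\qr^{1-\ro}=\qr\cdot\qr^{-\ro}=-\qr$. The remaining two basis pairs give $0$ on both sides by the Kronecker deltas, completing the verification.
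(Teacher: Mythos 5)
Your proof is correct and follows essentially the same route as the paper: a direct evaluation of both composites on basis vectors of $V_{-\alpha}\otimes V_\alpha$, using the explicit formulas for $w_{\pm\alpha}$, the Kronecker pairing for $d$, the $K^{1-\ro}$ insertion in $d'$, and the weight of $v_a$ to match the exponents (the paper restricts directly to the pairs $v_i\otimes v_{2\m-i}$, which is the support you identify). The $\vst$ case is likewise the same two scalar checks using $\qr^{\ro}=-1$.
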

\begin{proof}
  Let us denote by $f$ the left hand side of \eqref{E:d-andw} and by $g$ the
  right hand side of \eqref{E:d-andw}.  By a direct computation on $v_i\otimes
  v_{2{\m}-i}\in V_{{-\alpha}}\otimes V_{\alpha}$,
  $$
  f(v_i\otimes v_{2{\m}-i})=d_{V_\alpha}(-{\qr}^{i^2-1+i\alpha}v_{2{\m}-i}^*\otimes
  v_{2{\m}-i})=-{\qr}^{i^2-1+i\alpha}
  $$ 
  and
  \begin{align*}
    g(v_i\otimes v_{2{\m}-i})=&
    d'_{V_{-\alpha}}(-{\qr}^{(2{\m}-i)^2-1-(2{\m}-i)\alpha}v_{i}\otimes v_{i}^*)
    \\
    =&-{\qr}^{4{\m}^2-4{\m}i+i^2-1-(2{\m}-i)\alpha}v_i^*(K^{-2{\m}}v_i)
    \\
    =&-{\qr}^{4{\m}^2-4{\m}i+i^2-1-(2{\m}-i)\alpha}{\qr}^{(-2{\m})(-\alpha+2({\m}-i))}
    \\
    =& -{\qr}^{i^2-1+i\alpha}.
  \end{align*}
  The analogous equation for $\vst$ follows similarly
  $$
  g(v_0\otimes v_1)=-qv_1^*(v_1)=-{\qr}=
  {\qr}^{-2{\m}}=v_0^*(K^{-2{\m}}v_0)=f(v_0\otimes v_1).
  $$ 
\end{proof}

For $\alpha\in B$, we denote $d^\alpha$ and $b^\alpha$ as the following morphisms
\begin{equation}\label{eq:def_duality}
  d^\alpha=d_{V_{\alpha}}\circ(w_{{-\alpha}} \otimes \Id_{V_{\alpha}}){\colon\thinspace}
  V_{-\alpha}\otimes V_\alpha\to\C
\end{equation}
\begin{equation}\label{eq:def_dualityy}
  b^\alpha= (\Id_{V_{\alpha}}\otimes (w_{-\alpha})^{-1} )\circ b_{V_{\alpha}}
  {\colon\thinspace}\C\to V_\alpha\otimes V_{-\alpha}
\end{equation}
Similarly,
$$
d^\vst=d_\vst\circ(w_\vst\otimes\Id){\colon\thinspace}\vst\otimes\vst\to\C\quad 
b^\vst=(\Id\otimes w_\vst)\circ b_\vst{\colon\thinspace}\C\to\vst\otimes\vst.
$$
We use the isomorphism $w_{-\alpha}$ to identify $V_\alpha^*$ with
$V_{-\alpha}$.  Under this identification, we get $d_{{V_{\alpha}}}\cong
d'_{{V_{-\alpha}}}\cong d^\alpha$ and $b_{{V_{\alpha}}}\cong
b'_{{V_{-\alpha}}}\cong b^\alpha$.  Similarly, $d_{\vst}\cong d'_{\vst^*}\cong
d^\vst$ and $b_{\vst}\cong b'_{\vst^*}\cong b^\vst$.  Graphically, for a
$\catd$-colored ribbon graph $\Ga$, this means that one can reverse the
orientation of an edge colored by $V_\alpha$ and simultaneously replace its
coloring by $V_{-\alpha}$.  Also, if $\Ga$ has an oriented edge colored by
$\vst$, one can forgot its orientation.  We will represent edges colored by
$\vst$ with dashed unoriented edges (see for example Figure \ref{F:X}).

\subsection{Multiplicity modules in $V_\alpha\otimes
  V_{-\alpha\pm1}\otimes \vst$}\label{S:multm} \ \\
We consider the following spaces of morphisms of $\catd$ using the notation
$$
H_{U,V}^{W}=\Hom_\catd(U\otimes V,W),\quad
\quad H^{U,V}_{W}= \Hom_\catd(W,U\otimes V),
$$
$$
H_{U,V,W}=\Hom_\catd(U\otimes V\otimes W,\unit),\quad
\quad H^{U,V,W}= \Hom_\catd(\unit,U\otimes V\otimes W),
$$
where $U,V,W$ are weight modules.  If there is no ambiguity, for $\alpha\in B$
we replace $V_\alpha$ with $\alpha$ in this notation,
e.g. $H_{V_\beta,V_\gamma}^{V_\alpha}=H_{\beta,\gamma}^{\alpha}$.  Also, since
$V_\alpha^*$ and $V_{-\alpha}$ are identified we can replace $V_\alpha^*$ with
${-\alpha}$,
e.g. $H_{V_\beta,V_\gamma}^{V_\alpha^*}=H_{\beta,\gamma}^{-\alpha}$.

We define the symmetric multiplicity module of $U,V,W$ to be the space
$H(U,V,W)$ obtained by identifying the $12$ following isomorphic spaces
\begin{align}\label{E:12iso}
H^{U,V,W}\simeq H^{W,U,V}\simeq H^{V,W,U}\simeq H^{U}_{W^*,V^*}\simeq
H^{W}_{V^*,U^*}\simeq H^{V}_{U^*,W^*}\simeq \notag\\
H^{U,V}_{W^*}\simeq H^{W,U}_{V^*}\simeq H^{V,W}_{U^*}\simeq
H_{W^*,V^*,U^*}\simeq H_{V^*,U^*,W^*}\simeq H_{U^*,W^*,V^*}
\end{align}
where each of these isomorphisms come from certain duality morphisms (see
\cite{Tu}).

For $\alpha\in\C\setminus\Z$ the character formula implies 
\begin{equation}
  \label{eq:vst.Va}
  \vst\otimes V_{\alpha}\simeq V_{\alpha-1}\oplus V_{\alpha+1}.
\end{equation}
Therefore, for $\alpha\in\C\setminus\Z$, the space $H^{\vst,\alpha}_{\beta}$
is the zero space if $\beta\neq\alpha\pm1$ and $H^{\vst,\alpha}_{\beta}$ has
dimension 1 if $\beta=\alpha\pm1$.

Consider the morphism $\Yv-{\alpha+1}{\vst}{\alpha}{\colon\thinspace} V_{\alpha+1}\rightarrow
\vst\otimes V_{\alpha}$ given by
$$\quad v_0\mapsto v_0\otimes v_0\quad\text{ and
}\quad v_i\mapsto {\qr}^{-i}v_0\otimes v_i+\qN iv_1\otimes v_{i-1}.$$
This morphism forms a basis of $H^{\vst,\alpha}_{\alpha+1}$.   
Thus, this morphism and the cyclic isomorphisms 
$$H^{\alpha,\vst}_{\alpha+1}\simeq
H^{\vst,-\alpha-1}_{-\alpha},\quad H_{\alpha+1,\vst}^{\alpha}\simeq
H^{\vst,-\alpha-1}_{-\alpha},\quad H_{\vst,\alpha+1}^{\alpha}\simeq
H^{\vst,\alpha}_{\alpha+1}$$
induce a basis on $H^{\alpha,\vst}_{\alpha+1}, H_{\alpha+1,\vst}^{\alpha}$,
and $H_{\vst,\alpha+1}^{\alpha}$.  Each of these basis consists of the single
morphism which we denote by $$\Yv-{\alpha+1}{\alpha}{\vst} ,\quad
\Zv-{\alpha}{\alpha+1}{\vst},\quad \Zv-{\alpha}{\vst}{\alpha+1},$$
respectively.  Moreover, the morphism $\Yv-{\alpha+1}{\vst}{\alpha}$ and
isomorphisms represented in Equation \eqref{E:12iso} define a basis vector
$\omega^-(\alpha)$ for the symmetric module $H(\vst,\alpha,-\alpha-1)$.

Similarly, consider the basis of $H^{\vst,\alpha+1}_\alpha$ given by the 
morphism
$$\Yv+{\alpha}{\vst}{\alpha+1}:\quad v_{2{\m}}\mapsto
{\qr}^{-1}\qn{\alpha-2{\m}}v_1\otimes v_{2{\m}}\quad\text{ and }$$
$$v_i\mapsto -{\qr}^{\alpha-i-1}\qn{1}v_0\otimes v_{i+1} + {\qr}^{-1}\qn{\alpha-
  i}v_1\otimes v_{i}.$$
As above this morphism and the isomorphisms in \eqref{E:12iso} induce basis of
$H^{\alpha+1,\vst}_\alpha, H_{\alpha,\vst}^{\alpha+1},
H_{\vst,\alpha}^{\alpha+1}$, and $H(\vst,\alpha+1,-\alpha)$ which each consist
of one morphism which we denote by
$$\Yv+{\alpha}{\alpha+1}{\vst} ,\quad
\Zv+{\alpha+1}{\alpha}{\vst},\quad \Zv+{\alpha+1}{\vst}{\alpha}, \quad
\omega^+(\alpha),$$ respectively.

\begin{figure}[t,b]
  \centering \hspace{10pt} $\epsh{fig09}{16ex}$ 
  \put(2,5){\ms{\alpha+1}}\put(-7,35){\ms{\alpha}}
  \hspace{28pt}
  $=\hspace{3pt}\qn{\alpha+1}\epsh{fig10}{16ex}$
  \put(-7,5){\ms{\alpha}}
\hspace{10pt}
  \caption{The duality for $H(V_\alpha,V_{\alpha\pm1},\vst)$}\label{F:Mixte_dual}
\end{figure}
The next proposition is illustrated by Figure \ref{F:Mixte_dual}.  It computes
the pairing of some of the families of morphisms defined above.
\begin{proposition}\label{P:Pairing}
  \begin{equation}
    \label{eq:dual_mixte}
    \Zv- {\alpha}{\vst}{\alpha+1} \circ
    \Yv+{\alpha}{\vst}{\alpha+1}=\qn{\alpha+1}\Id_{V_{\alpha}}
  \end{equation}
  The evaluation of $F'$ on the colored $\Theta$-graph $\epsh{fig14}{6ex}$
  induce the pairing\\ 
  $H(\vst,\alpha,-\alpha-1)\otimes H(\vst,\alpha+1,-\alpha)\to\C$ determined
  by 
  $$\left\langle\omega^-(\alpha),\omega^+(\alpha)\right\rangle=
  (-1)^{\m}\frac{\qn{\alpha}\qn{\alpha+1}}{\qn{l\alpha}} 
  =\prod_{k=2}^{2{\m}}\frac1{\qn{\alpha+k}}.$$
\end{proposition}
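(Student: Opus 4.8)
The plan is to prove the first identity by a direct module computation and then to deduce the $\Theta$-graph evaluation from it by a cutting-presentation argument. For \eqref{eq:dual_mixte}, note first that $V_\alpha$ is simple (for $\alpha\in B$), so by Schur's lemma the endomorphism $\Zv-{\alpha}{\vst}{\alpha+1}\circ\Yv+{\alpha}{\vst}{\alpha+1}$ of $V_\alpha$ is a scalar, and it suffices to evaluate it on one vector. The map $\Yv+{\alpha}{\vst}{\alpha+1}$ is given explicitly, but $\Zv-{\alpha}{\vst}{\alpha+1}$ is defined only through the cyclic isomorphism $H_{\vst,\alpha+1}^\alpha\simeq H^{\vst,\alpha}_{\alpha+1}$ applied to $\Yv-{\alpha+1}{\vst}{\alpha}$, so the first task is to make this isomorphism explicit. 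Bending the $\vst$-leg of $\Yv-{\alpha+1}{\vst}{\alpha}:V_{\alpha+1}\to\vst\otimes V_\alpha$ down to an input uses the self-duality of $\vst$, giving $\Zv-{\alpha}{\vst}{\alpha+1}=(d^\vst\otimes\Id_{V_\alpha})\circ(\Id_\vst\otimes\Yv-{\alpha+1}{\vst}{\alpha})$, where $d^\vst$ is the evaluation from \eqref{eq:def_duality}. Using $w_\vst$ one gets $d^\vst(v_0\otimes v_1)=-\qr$ and $d^\vst(v_1\otimes v_0)=1$ (the other values vanishing), which yields the action of $\Zv-{\alpha}{\vst}{\alpha+1}$ on the basis of $\vst\otimes V_{\alpha+1}$.

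Next I would evaluate the composite on the highest weight vector $v_0\in V_\alpha$. Here $\Yv+{\alpha}{\vst}{\alpha+1}(v_0)=-\qr^{\alpha-1}\qn1\, v_0\otimes v_1+\qr^{-1}\qn\alpha\, v_1\otimes v_0$, and applying $\Zv-{\alpha}{\vst}{\alpha+1}$ term by term collapses the two contributions to $\bigl(\qr^\alpha\qn1+\qr^{-1}\qn\alpha\bigr)v_0=\qn{\alpha+1}v_0$. (Alternatively, evaluating on the lowest weight vector $v_{2\m}$ involves only the single term $\qr^{-1}\qn{\alpha-2\m}v_1\otimes v_{2\m}$ and uses the root-of-unity relation $\qr^\ro=-1$.) By simplicity this establishes \eqref{eq:dual_mixte} on all of $V_\alpha$.

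For the $\Theta$-graph pairing I would pass to a cutting presentation. The pairing $\left\langle\omega^-(\alpha),\omega^+(\alpha)\right\rangle$ is $F'$ of the $\Theta$-graph whose three edges are colored $\vst$, $V_\alpha$ and $V_{\alpha+1}$ and whose two vertices are $\omega^-(\alpha)$ and $\omega^+(\alpha)$. Cutting the edge colored by the typical module $V_\alpha$ presents this graph as the braid closure of a $(1,1)$-tangle $T$, and unwinding the identifications \eqref{E:12iso} that define $\omega^\pm(\alpha)$ shows that, read from bottom to top, $T$ is exactly $\Zv-{\alpha}{\vst}{\alpha+1}\circ\Yv+{\alpha}{\vst}{\alpha+1}$. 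Hence $F(T)=\qn{\alpha+1}\Id_{V_\alpha}$ by the first part, and by the definition of $F'$ we obtain $F'(\Theta)=F(T)\,\qd(V_\alpha)=\qn{\alpha+1}\,(-1)^\m\frac{\qn\alpha}{\qn{\ro\alpha}}$. Cancelling $\qn{\alpha+1}$ against the $k=1$ factor of $\qd(V_\alpha)=\prod_{k=1}^{2\m}\qn{\alpha+k}^{-1}$ gives $\prod_{k=2}^{2\m}\qn{\alpha+k}^{-1}$, as claimed.

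The hard part will be the bookkeeping in the last step: one must verify that cutting the $V_\alpha$ edge reproduces precisely the maps $\Yv+{\alpha}{\vst}{\alpha+1}$ and $\Zv-{\alpha}{\vst}{\alpha+1}$, with no stray scalar introduced by the duality morphisms $b',d'$, by the twist, or by the $K^{\ro-1}$ factors that are hidden both in the symmetric-module identifications \eqref{E:12iso} and in the very definition of $\omega^\pm(\alpha)$. Making the isomorphism $H^{\vst,\alpha}_{\alpha+1}\simeq H_{\vst,\alpha+1}^\alpha$ fully explicit in the first part is exactly what controls this, so I would carry out that step with enough care that its scalars can be reused verbatim when identifying $T$.
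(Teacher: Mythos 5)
Your proposal is correct and follows essentially the same route as the paper: the first identity is established by the same direct computation on the highest weight vector $v_0$ (writing $\Zv-{\alpha}{\vst}{\alpha+1}$ via $d^\vst$ and $\Yv-{\alpha+1}{\vst}{\alpha}$ and collapsing the two cross terms to $\qn{\alpha+1}v_0$), and the $\Theta$-graph value is then obtained by cutting the $V_\alpha$ edge and multiplying $F(T)=\qn{\alpha+1}$ by $\qd(V_\alpha)$. The only difference is that you are more explicit about Schur's lemma and the bookkeeping of the cyclic identifications, which the paper leaves implicit.
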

\begin{proof}
  The duality follows from the value of $\qd(V_\alpha)$ and from the first
  statement which is the result of the following computation
  $$\Zv- {\alpha}{\vst}{\alpha+1}\circ \Yv+{\alpha}{\vst}{\alpha+1}(v_0)=$$
  $$
  (d_\vst\otimes{\Id_{V_\alpha}})\circ
  (w_\vst\otimes{\Id_{\vst}}\otimes{\Id_{V_\alpha}})\circ
  ({\Id_{\vst}}\otimes\Yv-{\alpha+1}{\vst}{\alpha})\circ
  \Yv+{\alpha}{\vst}{\alpha+1}(v_0)=
  $$
  $$
  (d_\vst\otimes{\Id})\circ (w_\vst\otimes{\Id}\otimes{\Id})\circ
  ({\Id}\otimes\Yv-{\alpha+1}{\vst}{\alpha})(
  -{\qr}^{\alpha-1}\qn{1}v_0\otimes v_{1} + {\qr}^{-1}\qn{\alpha}v_1\otimes
  v_{0} )=
  $$
  $$
  (d_\vst\otimes{\Id})\circ (w_\vst\otimes{\Id}\otimes{\Id})(
  -{\qr}^{\alpha-1}\qn{1}v_0\otimes v_1\otimes v_{0} +
  {\qr}^{-1}\qn{\alpha}v_1\otimes v_{0}\otimes v_{0} )=
  $$
  $$
  (d_\vst\otimes{\Id})( {\qr}^{\alpha}\qn{1}v_1^*\otimes v_1\otimes v_{0} +
  {\qr}^{-1}\qn{\alpha}v_0^*\otimes v_{0}\otimes v_{0} )=
  $$
  $$
  {\qr}^{\alpha}\qn{1} v_{0} + {\qr}^{-1}\qn{\alpha}v_{0} = \qn{\alpha+1}v_0.
  $$ 
  \\[-5ex]
  Here the first equality correspond to the isotopy:
  \put(30,5){$\epsh{fig09}{10ex}=\epsh{fig15}{10ex}$}
\end{proof}
\begin{remark}\label{R:zeroproj}
  If $\alpha\neq\beta$ are in $\C\setminus\Z$ then $V_\alpha$ and $V_\beta$
  are non isomorphic simple modules and we have $\Hom(V_\alpha,V_\beta)=0$.
  Thus,
  $$\Zv-**\vst\circ\Yv-**\vst=\Zv+**\vst\circ\Yv+**\vst=
  \Zv-*\vst*\circ\Yv-*\vst*=\Zv+*\vst*\circ\Yv+*\vst*=0.$$
  Here and after, the stars $*$ shall be replaced by any element in
  $\C\setminus\Z$ such that the morphisms are defined.
\end{remark}

\begin{corollary}\label{coro:fusion}(Fusion rule): For any
  $\alpha\in\C\setminus\Z$,
  \begin{equation}
    \label{eq:fusion}
  \qn\alpha\Id_{\vst\otimes V_\alpha}=
  \Yv+{\alpha-1}\vst{\alpha}\circ\Zv-{\alpha-1}\vst{\alpha}
  -\Yv-{\alpha+1}\vst{\alpha}\circ\Zv+{\alpha+1}\vst{\alpha}.
  \end{equation}
\end{corollary}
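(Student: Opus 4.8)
The plan is to combine Schur's lemma with Proposition~\ref{P:Pairing}. Since $\alpha\in\C\setminus\Z$, Equation~\eqref{eq:vst.Va} gives a direct sum decomposition $\vst\otimes V_\alpha\simeq V_{\alpha-1}\oplus V_{\alpha+1}$ into two non-isomorphic simple modules. By Schur's lemma together with Remark~\ref{R:zeroproj}, the algebra $\End_\catd(\vst\otimes V_\alpha)$ is two dimensional, spanned by the idempotents $p_-,p_+$ projecting onto $V_{\alpha-1},V_{\alpha+1}$, with $p_-+p_+=\Id_{\vst\otimes V_\alpha}$. The two terms on the right-hand side are morphisms of $\catd$: the composite $P_-:=\Yv+{\alpha-1}\vst{\alpha}\circ\Zv-{\alpha-1}\vst{\alpha}$ factors through $V_{\alpha-1}$ and $P_+:=\Yv-{\alpha+1}\vst{\alpha}\circ\Zv+{\alpha+1}\vst{\alpha}$ factors through $V_{\alpha+1}$, so that $P_-=c_-p_-$ and $P_+=c_+p_+$ for some scalars $c_\pm\in\C$. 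It therefore suffices to prove $c_-=\qn{\alpha}$ and $c_+=-\qn{\alpha}$, since then the right-hand side equals $c_-p_--c_+p_+=\qn{\alpha}(p_-+p_+)=\qn{\alpha}\Id_{\vst\otimes V_\alpha}$.

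To identify the scalars I would compute the composites in the reverse order, which by Schur are scalar endomorphisms of a simple module; by Remark~\ref{R:zeroproj} each $P_\pm$ vanishes on the opposite summand, so $c_\pm$ is precisely this reverse scalar. For $c_-$ this is immediate from Proposition~\ref{P:Pairing}: substituting $\alpha-1$ for $\alpha$ in \eqref{eq:dual_mixte} yields $\Zv-{\alpha-1}\vst{\alpha}\circ\Yv+{\alpha-1}\vst{\alpha}=\qn{\alpha}\Id_{V_{\alpha-1}}$, whence $c_-=\qn{\alpha}$. For $c_+$ I would mirror the proof of Proposition~\ref{P:Pairing}: realizing the cyclic isomorphism through the duality of $\catd$ exactly as done there gives $\Zv+{\alpha+1}\vst{\alpha}=(d^\vst\otimes\Id_{V_{\alpha+1}})\circ(\Id_\vst\otimes\Yv+{\alpha}\vst{\alpha+1})$.

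The computation of $c_+$ is then a short evaluation on the highest weight vector. Since $\Yv-{\alpha+1}\vst{\alpha}(v_0)=v_0\otimes v_0$, the explicit formula for $\Yv+{\alpha}\vst{\alpha+1}$ gives $(\Id_\vst\otimes\Yv+{\alpha}\vst{\alpha+1})(v_0\otimes v_0)=-\qr^{\alpha-1}\qn{1}\,v_0\otimes v_0\otimes v_1+\qr^{-1}\qn{\alpha}\,v_0\otimes v_1\otimes v_0$, and the values $d^\vst(v_0\otimes v_0)=0$ and $d^\vst(v_0\otimes v_1)=-\qr$, which follow from $w_\vst(v_0)=-\qr v_1^*$, collapse this expression to $-\qn{\alpha}\,v_0$. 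Thus $c_+=-\qn{\alpha}$, which completes the argument.

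I expect the main obstacle to be the same as in Proposition~\ref{P:Pairing}, namely fixing the correct normalization of the projector $\Zv+{\alpha+1}\vst{\alpha}$: one must check that the duality realization of the relevant cyclic isomorphism carries no spurious scalar, so that the sign and coefficient in $c_+=-\qn{\alpha}$ come out exactly right. Once that normalization is pinned down, the remaining root-of-unity arithmetic is routine and the fusion identity follows from the completeness relation $p_-+p_+=\Id_{\vst\otimes V_\alpha}$.
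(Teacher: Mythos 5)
Your proposal is correct and follows essentially the same route as the paper, whose proof is a one-line appeal to exactly the ingredients you use: the splitting \eqref{eq:vst.Va} into non-isomorphic simples, Remark~\ref{R:zeroproj}, and Proposition~\ref{P:Pairing} to pin down the two scalars. The only (harmless) difference is that you obtain $c_+=-\qn{\alpha}$ by a direct highest-weight evaluation, whereas the paper leaves it to be read off from the $\Theta$-graph pairing in Proposition~\ref{P:Pairing} together with $\qd(V_{\alpha+1})/\qd(V_\alpha)=-\qn{\alpha+1}/\qn{\alpha}$; both give the same value, confirming your normalization of $\Zv+{\alpha+1}\vst{\alpha}$.
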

\begin{proof}
  This is a direct consequence of Proposition \ref{P:Pairing} and the fact
  that $\vst\otimes V_\alpha$ split into a direct sum of simple modules as in
  \eqref{eq:vst.Va}. (Also see Remark \ref{R:zeroproj}.)
\end{proof}

\begin{lemma}\label{lem:com}
For all  $ \alpha\in\C\setminus\Z$, one has
  $$
  \left(\Id\otimes\Yv-{\alpha+1}{\alpha}{\vst}\right)\circ
  \Yv-{\alpha+2}{\vst}{\alpha+1}= 
  \left(\Yv-{\alpha+1}{\vst}{\alpha}\otimes\Id\right)\circ
  \Yv-{\alpha+2}{\alpha+1}{\vst}
  $$ 
  and similarly 
  $$
  \left(\Id\otimes\Yv+{\alpha-1}{\alpha}{\vst}\right)\circ
  \Yv+{\alpha-2}{\vst}{\alpha-1}= 
  \left(\Yv+{\alpha-1}{\vst}{\alpha}\otimes\Id\right)\circ
  \Yv+{\alpha-2}{\alpha-1}{\vst}.
  $$ 
\end{lemma}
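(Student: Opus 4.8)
The plan is to use that both composites are $\Ubar$-module maps out of the cyclic module $V_{\alpha+2}$, so that each is determined by the image of its highest-weight vector $v_0$, and the whole identity collapses to a comparison of two vectors of $\vst\otimes V_\alpha\otimes\vst$. First I would record the relevant multiplicities: applying the fusion rule \eqref{eq:vst.Va} twice (moving the second $\vst$ past by the braiding) gives $\vst\otimes V_\alpha\otimes\vst\simeq V_{\alpha-2}\oplus V_\alpha\oplus V_\alpha\oplus V_{\alpha+2}$ for $\alpha\in\C\setminus\Z$, so $\Hom_\catd(V_{\alpha+2},\vst\otimes V_\alpha\otimes\vst)$ is one-dimensional. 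Each of the four trivalent morphisms occurring is a nonzero map between typical simple modules, hence injective, so neither composite is zero; thus it suffices to check that the two sides agree on $v_0$.

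Next I would locate that common image. The vector $v_0\otimes v_0\otimes v_0$ has $H$-weight $1+(\alpha+\ro-1)+1=\alpha+\ro+1$, the highest weight of $V_{\alpha+2}$, and it is annihilated by $E$, since each tensor factor is a highest-weight vector and $E$ acts through the iterated coproduct $\Delta^{(2)}(E)=E\otimes K\otimes K+1\otimes E\otimes K+1\otimes 1\otimes E$. By the multiplicity-one count above, $v_0\otimes v_0\otimes v_0$ therefore spans the highest-weight line of the $V_{\alpha+2}$-summand. Consequently each side sends $v_0$ to a scalar multiple of $v_0\otimes v_0\otimes v_0$, and the lemma is equivalent to the equality of these two scalars.

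To evaluate the scalars I would push $v_0$ through each composite. The straight vertices contribute trivially, since $\Yv-{\beta+1}{\vst}{\beta}$ sends $v_0\mapsto v_0\otimes v_0$ by definition; hence the left-hand side produces $c(\alpha)\,v_0\otimes v_0\otimes v_0$ and the right-hand side $c(\alpha+1)\,v_0\otimes v_0\otimes v_0$, where $c(\beta)$ is the highest-weight coefficient of the rotated vertex, i.e. the value of $\Yv-{\beta+1}{\beta}{\vst}$ on $v_0$ expressed in the basis vector $v_0\otimes v_0$. \textbf{The heart of the matter} is then to prove $c(\alpha)=c(\alpha+1)$. For this I would unwind the defining cyclic isomorphism $H^{\vst,\beta}_{\beta+1}\simeq H^{\beta,\vst}_{\beta+1}$ through the pivotal duality data $w_\beta,\,b^\beta,\,d^\beta$ of the previous subsection and read off the leading coefficient of $\Yv-{\beta+1}{\beta}{\vst}$; the expected outcome is that this normalization is independent of $\beta$ (indeed that the rotated vertex again sends $v_0\mapsto v_0\otimes v_0$), giving $c(\alpha)=c(\alpha+1)=1$. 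I expect this normalization bookkeeping to be the main obstacle. It is essential that the rotation be the pivotal one and not the braiding: a direct computation shows $c_{\vst,V_\beta}(v_0\otimes v_0)=\qr^{(\beta+\ro-1)/2}\,v_0\otimes v_0$, whose $\beta$-dependent factor would break the equality, so the required cancellation must come precisely from the $K^{\ro-1}$ factors built into the duality morphisms.

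Finally, the second identity is handled by the identical argument carried out for the rotated $+$–vertices $\Yv+{\alpha-1}{\alpha}{\vst}$ and $\Yv+{\alpha-2}{\alpha-1}{\vst}$, replacing the symmetric tensor $\omega^-$ by $\omega^+$ throughout; alternatively, it follows from the first identity by applying the duality isomorphisms $w_\alpha$ that exchange the two families.
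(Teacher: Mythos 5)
Your proposal is correct and is essentially the paper's argument: both composites are determined by their value on a single generating weight vector of the simple source module, that value lies in a one-dimensional weight space of the target, and the paper simply asserts that both sides send $v_0$ to $v_0\otimes v_0\otimes v_0$ (i.e.\ your $c(\beta)\equiv 1$, which does follow from the normalization of the cyclic isomorphisms exactly as in the proof of Proposition \ref{prop:cycl-2M}). One small caution on the second identity: the weight space of $\vst\otimes V_\alpha\otimes\vst$ matching the highest weight of $V_{\alpha-2}$ is four-dimensional, so a literal transposition of the $v_0$ computation does not yield a single coefficient; the paper instead evaluates on the lowest-weight vector, showing both sides send $v_{2\m}\mapsto\qr^{-2}\qn{\alpha}\qn{\alpha-1}\,v_1\otimes v_{2\m}\otimes v_1$, and your multiplicity-one reduction still covers this since any one nonzero matrix element determines the scalar.
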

\begin{proof}
  The first equality is true because both side are maps
  $V_{\alpha+2}\rightarrow \vst\otimes V_\alpha\otimes \vst$ determined by
  $v_0\mapsto v_0\otimes v_0\mapsto v_0\otimes v_0\otimes v_0$.
  Similarly, an easy computation gives that the other two maps
  $V_{\alpha-2}\rightarrow \vst\otimes V_\alpha\otimes \vst$ are determined by
  $ v_{2{\m}}\mapsto {\qr}^{-2}\qn{\alpha}\qn{\alpha-1}v_1\otimes v_{2{\m}}
  \otimes v_1$.
\end{proof}
\begin{figure}[t,b]
  \centering \hspace{10pt} $\X=\dfrac1{\qn1}\epsh{fig11}{16ex}$ 
  \put(2,35){\ms{\beta+1}}\put(2,-25){\ms{\beta}}\put(-52,35){\ms{\alpha+1}}
  \put(-52,-25){\ms{\alpha}}
\hspace{10pt}
  \caption{The family of maps $\X$}\label{F:X}
\end{figure}
If $\alpha,\beta\in \C\setminus\Z$, we will use the following family of
operators
$$\X{\colon\thinspace}V_\alpha\otimes V_\beta\to V_{\alpha+1}\otimes V_{\beta+1}$$
given by $\X=\dfrac1{\qn1}(\Id\otimes d^\vst\otimes\Id)\circ
\left(\Yv+\alpha{\alpha+1}\vst\otimes\Yv+\beta\vst{\beta+1}\right)$.  
The following lemma shows that the denominator of $\X$ disappears.
\begin{lemma}\label{lem:X} For $\alpha,\beta\in \C\setminus\Z$ the map
  $\X{\colon\thinspace}V_\alpha\otimes V_\beta\to V_{\alpha+1}\otimes V_{\beta+1}$ is given by
  $$
  \X{\colon\thinspace}v_i\otimes v_j\mapsto {\qr}^{\beta+i-j-1}\qn{\alpha-i}v_i\otimes v_{j+1}
  +{\qr}^{-1}\qn{\beta-j}v_{i+1}\otimes v_{j}
  $$
  where $v_{2{\m}+1}$ should be understood as $0$.
\end{lemma}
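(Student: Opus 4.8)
The plan is to make each of the three morphisms composing $\X$ explicit on the weight basis and then simplify the composite. The self-duality map is the easiest ingredient: from $d^\vst=d_\vst\circ(w_\vst\otimes\Id_\vst)$ together with $w_\vst\colon v_0\mapsto-\qr v_1^*,\ v_1\mapsto v_0^*$ and the evaluation $d_\vst(f\otimes w)=f(w)$, one reads off $d^\vst(v_0\otimes v_1)=-\qr$, $d^\vst(v_1\otimes v_0)=1$, and $d^\vst(v_0\otimes v_0)=d^\vst(v_1\otimes v_1)=0$.

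Next I would contract the right-hand vertex. Since the explicit formula for $\Yv+\beta\vst{\beta+1}$ is given in the excerpt, I can compute the partial contraction $Z:=(d^\vst\otimes\Id_{V_{\beta+1}})\circ(\Id_\vst\otimes\Yv+\beta\vst{\beta+1})\colon\vst\otimes V_\beta\to V_{\beta+1}$ directly: feeding in $v_a\otimes v_j$ and using that $d^\vst$ is supported on $v_0\otimes v_1$ and $v_1\otimes v_0$, only one term survives in each case, giving $Z(v_0\otimes v_j)=-\qn{\beta-j}\,v_j$ and $Z(v_1\otimes v_j)=-\qr^{\beta-j-1}\qn1\,v_{j+1}$ (the case $j=2\m$ is covered by the same formulas). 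This step uses only data already in the excerpt.

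The crux is to make $\Yv+\alpha{\alpha+1}\vst\colon V_\alpha\to V_{\alpha+1}\otimes\vst$ explicit, since the excerpt defines it only through the duality isomorphisms of \eqref{E:12iso} applied to the given $\Yv+\alpha\vst{\alpha+1}$. I would unwind that identification---sliding the $\vst$-leg past $V_{\alpha+1}$ by means of the cup/cap $b^\vst,d^\vst$ and the self-duality $w_\vst$---to obtain, on the basis, $v_i\mapsto-\qr^{-1}\qn1\,v_{i+1}\otimes v_0-\qr^{i}\qn{\alpha-i}\,v_i\otimes v_1$ (with $v_{2\m+1}=0$). Equivalently, one can characterize this map intrinsically: $H^{\alpha+1,\vst}_\alpha$ is one-dimensional, so the morphism is determined by the image of the highest weight vector $v_0$, which must be annihilated by $E$ (forcing $v_0\mapsto-\qr^{-1}\qn1\,v_1\otimes v_0-\qn\alpha\,v_0\otimes v_1$ once the overall scalar is fixed), and then propagated by $\Delta(F)=K^{-1}\otimes F+F\otimes1$. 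Verifying that this propagation reproduces the closed form above is a short induction that invokes $\qr^{\ro}=-1$ (valid because $\gcd(m,2\ro)=1$ forces $m$ odd) and the standard identity $\qn{x+z}\qn{y+z}-\qn x\qn y=\qn{x+y+z}\qn z$.

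Finally I would assemble the pieces as $\X(v_i\otimes v_j)=\tfrac1{\qn1}(\Id_{V_{\alpha+1}}\otimes Z)\big(\Yv+\alpha{\alpha+1}\vst(v_i)\otimes v_j\big)$. Substituting the two formulas, the $v_{i+1}\otimes v_0$ term pairs with $Z(v_0\otimes v_j)$ and the $v_i\otimes v_1$ term with $Z(v_1\otimes v_j)$; the prefactor $\qn1^{-1}$ cancels the $\qn1$ carried by the first coefficient and the $\qn1$ produced by $Z$ in the second, leaving exactly $\qr^{-1}\qn{\beta-j}\,v_{i+1}\otimes v_j+\qr^{\beta+i-j-1}\qn{\alpha-i}\,v_i\otimes v_{j+1}$, with the cases $i=2\m$ and $j=2\m$ handled automatically by the convention $v_{2\m+1}=0$. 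The main obstacle is the third paragraph: producing the correctly normalized closed form for $\Yv+\alpha{\alpha+1}\vst$ out of the abstract duality isomorphism, since it is precisely the overall scalar $-\qr^{-1}\qn1$ that makes the denominator disappear; everything after that is bookkeeping.
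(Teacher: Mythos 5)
Your proposal is correct and follows essentially the same route as the paper: make $d^\vst$, $\Yv+\beta\vst{\beta+1}$ and $\Yv+\alpha{\alpha+1}\vst$ explicit on the weight bases and then compose, with the $\qn1$ cancellation at the end exactly as you describe. The one point where you diverge is the step you yourself flag as the crux: the paper pins down the normalization of $\Yv+\alpha{\alpha+1}\vst$ (arriving at the same closed form you state, $v_i\mapsto -\qr^i\qn{\alpha-i}\,v_i\otimes v_1-\qr^{-1}\qn1\,v_{i+1}\otimes v_0$) not by moving the $\vst$-leg with $b^\vst,d^\vst,w_\vst$, but by first computing $d^\alpha(v_i\otimes v_{2\m-j})=-\delta^i_j\qr^{i\alpha+i^2-1}$ and $b^\alpha(1)=\sum_i-\qr^{-i\alpha+1-i^2}v_{2\m-i}\otimes v_i$ and then evaluating the rotation $\Yv+\alpha{\alpha+1}\vst=(\Id\otimes d^{\alpha})\circ(\Id\otimes\Yv+{-\alpha-1}\vst{-\alpha}\otimes\Id)\circ(b^{\alpha+1}\otimes\Id)$, which fixes the scalar mechanically. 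Your primary route is equally valid since the two rotations agree by pivotality of $\catd$; only your fallback via the highest-weight vector and $\Delta F$ is genuinely incomplete on its own, as it determines the morphism only up to the overall scalar that you would still have to extract from the duality isomorphism.
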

\begin{proof}
  First, a direct computations shows that 
  \begin{equation}
    \label{eq:cgc-d}
    d^\alpha(v_i\otimes v_{2{\m}-j})=-\delta^i_j{\qr}^{i\alpha+i^2-1},\\
  \end{equation}
  \begin{equation}
    \label{eq:cgc-b}
   b^\alpha(1)=\sum_{i=0}^{2{\m}}-{\qr}^{-i\alpha+1-i^2}v_{2{\m}-i}\otimes v_i.
  \end{equation}
  Then using $\Yv+\alpha{\alpha+1}\vst=(\Id\otimes d^{\alpha})\circ
  (\Id\otimes\Yv+{-\alpha-1}\vst{-\alpha}\otimes\Id)\circ
  (b^{\alpha+1}\otimes\Id)$, we have
  $$
  \Yv+\alpha{\alpha+1}\vst(v_i)= -{\qr}^i\qn{\alpha-i}v_i\otimes v_1
  -{\qr}^{-1}\qn{1}v_{i+1}\otimes v_{0}.
  $$
 On the other hand, by definition we have
   $$
  \Yv+\beta\vst{\beta+1}(v_j)=-{\qr}^{\beta-j-1}\qn{1}v_0\otimes v_{j+1} +
  {\qr}^{-1}\qn{\beta- j}v_1\otimes v_{j}.
  $$
Combining these equalities with $d^\vst(v_0\otimes v_1)=-{\qr}$ and
  $d^\vst(v_1\otimes v_0)=1$ the result follows.
\end{proof}

\subsection{Multiplicity modules in $V_\alpha\otimes V_\beta\otimes V_\gamma$}
\label{S:mult} It is well-known that, in the quantum plane $\Z\langle
x,y\rangle_{/yx={\qr}^2xy}$, one has
$$\quad(x+y)^i=\sum_{k=0}^i{\qr}^{k(i-k)}\qb i kx^ky^{i-k}$$
for all $i\in\N.$
Applying this to $y=K^{-1}\otimes F$ and $x=F\otimes1$, we get
\begin{equation}\label{E:DeltaF}
(\Delta F)^i=(x+y)^i=\sum_{k=0}^i{\qr}^{k(i-k)}\qb i k F^kK^{k-i}\otimes
F^{i-k}.
\end{equation}

The character formula for typical modules (see \cite{GPT}) also implies that
for all $\alpha,\beta\in B$ with $\alpha+\beta\notin\Z$,
\begin{equation}
  \label{eq:Va.Vb}
  V_\alpha\otimes V_\beta=\sum_{k=-\m}^\m V_{\alpha+\beta+2k}
\end{equation}
Hence, for $\alpha,\beta,\gamma\in\C\setminus\Z$,
$$
\dim(H_\alpha^{\beta\,\gamma})=\left\{
  \begin{array}{l}
    1\text{ if
    }\beta+\gamma-\alpha\in\{-2{\m},-2{\m}+2,\ldots,2{\m}\}\\
    0 \text{ else.}
\end{array}\right.
$$
Now for $\alpha,\beta,\gamma\in\C\setminus\Z$ with
$\beta+\gamma-\alpha=2k\in\{-2{\m},-2{\m}+2,\ldots,2{\m}\}$ we define a map
$\Yn{2k}\alpha\beta\gamma$ which will form a basis for the 1-dimensional space
$H_\alpha^{\beta\,\gamma}$.  First, suppose $\beta+\gamma-\alpha=-2{\m}$ then
$$ 
\begin{array}{rcl}
  \Yn{-2{\m}}\alpha\beta\gamma{\colon\thinspace}V_\alpha&\to& V_\beta\otimes V_\gamma\\ 
  v_0&\mapsto& v_0\otimes v_0 \\ 
  v_n&\mapsto&(\Delta F)^nv_0\otimes v_0 
  =\sum_{k=0}^n{\qr}^{(n-k)(k-\beta-2{\m})}\qb n kv_k\otimes v_{n-k} 
\end{array}
$$ 
where the last equality follows from Equation \eqref{E:DeltaF}.
Now, let $n={\m}+k$ and define
$$
\Yn{2k}\alpha\beta\gamma=\X^{\circ n}\circ
\Yn{\hspace*{-6ex}-2{\m}}\alpha{\beta-n}{\gamma-n}\quad {\colon\thinspace}V_\alpha\to
V_\beta\otimes V_\gamma.
$$

We now show that these bases are compatible with
the cyclic isomorphisms defining the symmetric multiplicity modules.  Let $\rot$
be the cyclic
isomorphism 
\begin{equation}
  \label{eq:rot}
  \begin{array}{rcl}
    {\rot}{\colon\thinspace}H_\alpha^{\beta,\gamma}&\to& H_{-\beta}^{\gamma,-\alpha}\\
    f&\mapsto& (d^{\beta}\otimes\Id\otimes\Id)\circ
    f\circ(\Id\otimes\Id\otimes b^{\alpha})
  \end{array}
\end{equation}
\begin{remark}
  The family of maps $\Yn{-2{\m}}{*}{*}{*}\,$ can be seen as a section of the
  vector bundle $\E_{-2{\m}}$ which is a restriction of $\E\otimes \E\otimes
  \E^{*}$ to the subset of $B^3$ defined by the equation
  $\beta+\gamma-\alpha=-2{\m}$.  The cyclic isomorphism ${\rot}$ is a lift to
  this vector bundle of the permutation on the basis:
  $(\alpha,\beta,\gamma)\mapsto(-\beta,\gamma,-\alpha)$.  The following
  proposition means that the section $\Yn{-2{\m}}\alpha\beta\gamma$ is a fixed
  point of the cyclic isomorphism ${\rot}{\colon\thinspace}\E_{-2{\m}}\to \E_{-2{\m}}$.
\end{remark}
\begin{proposition}\label{prop:cycl-2M}
  For all $ (\alpha,\beta,\gamma)\in B^3$ with $\beta+\gamma-\alpha=-2{\m}$,
  we have
$$  {\rot}\left(\Yn{-2{\m}}\alpha\beta\gamma\right)
  =\Yn{-2{\m}}{-\beta}\gamma{-\alpha}. $$
\end{proposition}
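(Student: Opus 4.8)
Denote by $Y\colon V_\alpha\to V_\beta\otimes V_\gamma$ and $Y'\colon V_{-\beta}\to V_\gamma\otimes V_{-\alpha}$ the two maps appearing in the statement; the assertion is that $\rot(Y)=Y'$. My plan is to use that a morphism out of a highest weight module is determined by the image of its cyclic generator. Since $V_{-\beta}$ is generated as a $\Ubar$-module by $v_0$ (because $v_n=F^nv_0$) and every morphism in $H_{-\beta}^{\gamma,-\alpha}=\Hom_\catd(V_{-\beta},V_\gamma\otimes V_{-\alpha})$ commutes with $F$, two such morphisms coincide as soon as they agree on $v_0$. Now $\rot(Y)$ lies in $H_{-\beta}^{\gamma,-\alpha}$, being by \eqref{eq:rot} a composite of the category morphisms $b^\alpha$, $Y$ and $d^\beta$, and by definition $Y'(v_0)=v_0\otimes v_0$. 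Thus the whole proposition reduces to the single identity $\rot(Y)(v_0)=v_0\otimes v_0$.

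To compute the left-hand side I read \eqref{eq:rot} with its identity factors restored, namely $\rot(Y)=(d^\beta\otimes\Id\otimes\Id)\circ(\Id\otimes Y\otimes\Id)\circ(\Id\otimes b^\alpha)$ as a map $V_{-\beta}\to V_\gamma\otimes V_{-\alpha}$. Applying $\Id\otimes b^\alpha$ to $v_0$ and inserting the explicit formula \eqref{eq:cgc-b} for $b^\alpha(1)\in V_\alpha\otimes V_{-\alpha}$ yields a sum over $i$ of terms proportional to $v_0\otimes v_{2\m-i}\otimes v_i$. Next $\Id\otimes Y\otimes\Id$ expands $Y(v_{2\m-i})$ by its defining formula into a sum over $k\le 2\m-i$ of terms $v_k\otimes v_{2\m-i-k}$ with $v_k\in V_\beta$. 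Finally $d^\beta\otimes\Id\otimes\Id$ pairs the leading $v_0\in V_{-\beta}$ against $v_k\in V_\beta$.

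The sum now collapses for a structural reason, consistent with the fact that, by weight, $\rot(Y)(v_0)$ can only be a multiple of $v_0\otimes v_0$: by \eqref{eq:cgc-d} the pairing $d^\beta(v_0\otimes v_k)$ vanishes unless $k=2\m$, where it equals $-\qr^{-1}$, and the constraint $k\le 2\m-i$ then forces $i=0$ and $k=2\m$. Exactly one term survives, and its scalar is the product of the $i=0$ coefficient of \eqref{eq:cgc-b}, equal to $-\qr$; the $k=2\m$ coefficient of $Y(v_{2\m})$, whose exponent $(2\m-k)(k-\alpha-2\m)$ vanishes and whose binomial $\qb{2\m}{2\m}$ equals $1$; and the factor $-\qr^{-1}$ from $d^\beta$. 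This product equals $(-\qr)\cdot 1\cdot(-\qr^{-1})=1$, the surviving tensor being $v_0\otimes v_0\in V_\gamma\otimes V_{-\alpha}$. Hence $\rot(Y)(v_0)=v_0\otimes v_0=Y'(v_0)$ and the proposition follows.

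I expect the only real difficulty to be bookkeeping rather than any genuine identity: one must place the identity morphisms of \eqref{eq:rot} in the correct tensor slots and keep straight which factor of \eqref{eq:cgc-b} and \eqref{eq:cgc-d} carries $V_\alpha$ versus $V_{-\alpha}$ (respectively $V_{-\beta}$ versus $V_\beta$). Once the duality zig-zag is set up correctly, the highest weight forcing eliminates all but one term, so, unlike the general rotation statement for the weight-$2k$ generators, no summation identity is needed here; the content is precisely that the rotation $\rot$ preserves the normalization $v_0\mapsto v_0\otimes v_0$.
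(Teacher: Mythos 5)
Your proposal is correct and follows essentially the same route as the paper: reduce to checking $\rot(Y)$ on the generator $v_0\in V_{-\beta}$, note by weight that the image must be a multiple of $v_0\otimes v_0$, and extract the single surviving coefficient $(-\qr)\cdot 1\cdot(-\qr^{-1})=1$ from the duality zig-zag. The only cosmetic difference is that you use the closed formulas \eqref{eq:cgc-b} and \eqref{eq:cgc-d} for $b^\alpha$ and $d^\beta$, whereas the paper tracks the same contributions through $b_{V_\alpha}$, $d_{V_\beta}$ and the isomorphisms $w_{\pm}$.
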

\begin{proof}
  Let $f_1=\Yn{-2{\m}}{-\beta}\gamma{-\alpha}$ and
  $f_2={\rot}\left(\Yn{-2{\m}}\alpha\beta\gamma\right)$.  Since
  $f_1(v_0)=v_0\otimes v_0$, then $f_2$ is determined by its value on $v_0\in
  V_{-\beta}$ which must be a multiple of the unique weight vector $v_0\otimes v_0\in V_\gamma\otimes V_{-\alpha}$.  Because of
  this, we don't need to compute all the terms to see that
  $f_2(v_0)=v_0\otimes v_0$.  In particular, from the facts:
  \begin{itemize}
  \item  $b_{V_\alpha}{\colon\thinspace}1\mapsto v_{2{\m}}\otimes v_{2{\m}}^*+\cdots$
  \item $w_{-\alpha}^{-1}(v_{2{\m}}^*)=-{\qr} v_0$
  \item $w_{-\beta}(v_0)=-{\qr}^{-1}v_{2{\m}}^*$
  \item $\Yn{-2{\m}}\alpha\beta\gamma(v_{2{\m}})=(\Delta F)^{2{\m}}(v_0\otimes
    v_0)=v_{2{\m}}\otimes v_0+\cdots$\\ 
    (because $(\Delta F)^{2{\m}}=F^{2{\m}}\otimes1+\cdots$)
  \item $d_{V_\beta}(v_{2{\m}}^*\otimes v_{2{\m}})=1$
  \end{itemize}
  one can see that 
   $$f_2(v_0)=(d_{V_\beta}\otimes\Id\otimes\Id)
  \circ(w_{-\beta}\otimes\Yn{-2{\m}}\alpha\beta\gamma\otimes w_{-\alpha}^{-1})
  \circ (\Id\otimes b_{V_\alpha})(v_0)$$
is equal to $v_0\otimes v_0$.  Thus, $f_1=f_2$.  
\end{proof}

To establish the same statement for the maps $\Yn{k}\alpha\beta\gamma$ we will
need the two following lemmas.
\begin{lemma} \label{lem:turn} Let $\alpha,\beta,\gamma\in\C\setminus\Z$ such
  that $\alpha+\beta+\gamma=2-2{\m}$ then
  $$
  (\X\otimes\Id) \circ
  \left(\Id\otimes\Yn{\hspace*{-3ex}-2{\m}}{1-\alpha}{\beta-1}\gamma\right)
  \circ b^{\alpha-1}=(\Id\otimes \X) \circ
  \left(\Id\otimes\Yn{\hspace*{-6ex}-2{\m}}{-\alpha}{\beta-1}{\gamma-1}\right)
  \circ b^{\alpha}
  $$
\end{lemma}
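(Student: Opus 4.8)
Both sides of the claimed identity are morphisms $\C\to V_{1-\alpha}\otimes V_{\beta-1}\otimes V_{\gamma}$ (after inserting the relevant shifts), living in a space $H^{1-\alpha,\beta-1,\gamma}$ of dimension at most one under the hypothesis $\alpha+\beta+\gamma=2-2\m$, which forces the degrees to match so that both composites land in a one-dimensional multiplicity module. The plan is therefore to \emph{reduce the identity to a single scalar check} on the highest-weight vector, exactly in the spirit of the proof of Proposition \ref{prop:cycl-2M} and Lemma \ref{lem:com}: once we know both sides lie in a $1$-dimensional space, it suffices to evaluate both composites on the generating tensor and compare the leading coefficients.

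First I would verify the dimension-one claim. By \eqref{eq:Va.Vb} and the fusion/character analysis following it, the constraint $\alpha+\beta+\gamma=2-2\m$ (equivalently $(\beta-1)+\gamma-(\alpha-1)$ sits at the extreme $-2\m$ end of the admissible range) pins down $H_{\alpha-1}^{\beta-1,\gamma}$ and its cyclic rotations to dimension $1$; both sides of the equation are then forced to be proportional. The extremal case is convenient because the $\Yn{-2\m}{*}{*}{*}$ maps send $v_0\mapsto v_0\otimes v_0$, so the relevant coefficients are easy to isolate.

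Next I would compute each side explicitly on the distinguished vector coming through $b^{\alpha-1}$ (resp.\ $b^{\alpha}$). The key computational inputs are all already available: the formula \eqref{eq:cgc-b} for $b^\alpha(1)=\sum_i -\qr^{-i\alpha+1-i^2}v_{2\m-i}\otimes v_i$, the explicit action of $\X$ from Lemma \ref{lem:X}, namely $\X:v_i\otimes v_j\mapsto \qr^{\beta+i-j-1}\qn{\alpha-i}v_i\otimes v_{j+1}+\qr^{-1}\qn{\beta-j}v_{i+1}\otimes v_j$, and the top-vector behavior $\Yn{-2\m}{*}{*}{*}(v_0)=v_0\otimes v_0$. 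Applying $(\X\otimes\Id)\circ(\Id\otimes\Yn{-2\m}{1-\alpha}{\beta-1}\gamma)$ to the output of $b^{\alpha-1}$ produces a sum whose leading term I would track; doing the symmetric computation for $(\Id\otimes\X)\circ(\Id\otimes\Yn{-2\m}{-\alpha}{\beta-1}{\gamma-1})\circ b^\alpha$ yields a second expression, and I would check the two leading coefficients coincide under the weight constraint $\alpha+\beta+\gamma=2-2\m$.

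The main obstacle I anticipate is purely bookkeeping: making sure the shifts in the highest weights ($1-\alpha$ versus $-\alpha$, $\gamma$ versus $\gamma-1$) are threaded correctly through the $\X$-maps so that the two sides are genuinely comparing morphisms with the \emph{same} source and target, and then matching the scalar. In particular the $\qr$-power and $\qn{\cdot}$ factors produced by $\X$ on the two sides differ by terms that should collapse precisely because of the hypothesis on $\alpha+\beta+\gamma$; isolating the surviving leading monomial and confirming the $\qr$-exponents and quantum-integer factors agree is the delicate step. I expect that, as in Lemma \ref{lem:com}, restricting attention to the action on a single extremal weight vector (rather than the whole module) will suppress most of the terms and make the scalar comparison tractable.
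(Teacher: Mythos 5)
Your plan is essentially the paper's proof: both composites are invariant vectors in the one-dimensional multiplicity space $H^{\alpha,\beta,\gamma}$ (note the common target is $V_\alpha\otimes V_\beta\otimes V_\gamma$, not $V_{1-\alpha}\otimes V_{\beta-1}\otimes V_\gamma$, and the relevant level is $\alpha+\beta+\gamma=2-2\m$, one $\X$-step above the extreme $-2\m$), so equality reduces to matching a single nonvanishing matrix coefficient. The paper does exactly this, applying $(d^\alpha\otimes\Id\otimes d^\gamma)(v_0\otimes\,\cdot\,\otimes v_{2\m})$ and using the very ingredients you list --- the formula for $b^\alpha(1)$, Lemma \ref{lem:X}, and the values of $\Yn{-2{\m}}{*}{*}{*}$ on $v_0$ and $v_1$ --- to find that both sides give $-\qr^{-2}\qn{\alpha+\beta-2}v_1$; the only point to watch is that the coefficient you choose to track must be one on which the multiplicity space does not vanish identically.
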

\begin{proof}
  Both side of this equality are invariant maps $\C\to V_{\alpha}\otimes V_{\beta} \otimes
  V_{\gamma}$.  Let $Z_l$ and $Z_r$ be the maps on the right and left hand sides, respectively.  The space $H^{\alpha, \beta, \gamma}$ has dimension $1$ so the maps
 $Z_l$ and $Z_r$ are proportional.  Thus, to show they are equal it is enough to show the functions  $(d^\alpha\otimes\Id\otimes
  d^\gamma)(v_0\otimes Z_i(1)\otimes v_{2{\m}})$, for $i=l,r$, are equal.
  
 First, let us work on the left hand side.    
 By considering the formulas for $d^\alpha$ and $X$ the only terms of $b^{\alpha-1}(1)$ that contribute nontrivially to the function are $-{\qr} v_{2{\m}}\otimes v_0$ and $-{\qr^{1-\alpha}}
  v_{2{\m}-1}\otimes v_1$.   Therefore, we only need to consider 
  $$\Yn{-2{\m}}***(v_{0})=v_0\otimes v_0 \text{ and } \Yn{-2{\m}}***(v_{1})=v_1\otimes
  v_0+\cdots$$ where the other term(s) contained in the $\cdots$ can be
  disregarded since $d^\gamma(v_i\otimes v_{2{\m}})$ is non-zero if and only
  if $i=0$.  So $Z_l(1)$ is equal to
  \begin{eqnarray*}
&-{\qr}(\X\otimes\Id)(v_{2{\m}}\otimes v_0\otimes
  v_0)-{\qr^{1-\alpha}}(\X\otimes\Id)(v_{2{\m}-1}\otimes v_1\otimes
  v_0)+\cdots \\
 & =-\qr^{\beta-2}\qn{\alpha}(v_{2{\m}}\otimes v_1\otimes
  v_0)-\qr^{-\alpha}\qn{\beta-2}(v_{2{\m}}\otimes v_1\otimes v_0)+\cdots 
\end{eqnarray*}
where as above the term(s) contained in the $\cdots$ can be disregarded since
they do not contribute non-trivially to the function.  So, we have
  \begin{align*}
  (d^\alpha\otimes\Id\otimes d^\gamma)(v_0\otimes Z_l(1)\otimes v_{2{\m}})
  &=-(\qr^{\beta-4}\qn{\alpha} +\qr^{-\alpha-2}\qn{\beta-2})v_1\\
  &=-\qr^{-2}\qn{\alpha+\beta-2}v_1
  \end{align*}
since $d^x(v_0\otimes v_{2{\m}})=-\qr^{-1}$.  Similarly, 
  \begin{align*}(d^\alpha\otimes\Id\otimes d^\gamma)(v_0\otimes Z_r(1)\otimes
  v_{2{\m}}) = -\qr^{-2}\qn{\gamma-1}v_1
  =-\qr^{-2}\qn{\alpha+\beta-2}v_1.
  \end{align*}
\end{proof}
\begin{lemma}\label{lem:comX}
  Let $\alpha,\beta,\gamma\in\C\setminus\Z$, then $\X\otimes\Id$ and
  $\Id\otimes \X$ commute:
  $$(\X\otimes\Id)\circ(\Id\otimes \X)=(\Id\otimes
  \X)\circ(\X\otimes\Id){\colon\thinspace}V_{\beta-1}\otimes V_{\alpha-2}\otimes V_{\gamma-1}\to
  V_{\beta}\otimes V_{\alpha}\otimes V_{\gamma}.$$
  Graphically, this is illustrated by
  $\quad\epsh{fig16}{8ex}=\epsh{fig17}{8ex}$.
\end{lemma}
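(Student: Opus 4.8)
The plan is to prove the identity by a direct computation built on the explicit formula for $\X$ provided by Lemma \ref{lem:X}. Both composites are morphisms $V_{\beta-1}\otimes V_{\alpha-2}\otimes V_{\gamma-1}\to V_\beta\otimes V_\alpha\otimes V_\gamma$, so it suffices to evaluate them on a basis vector $v_i\otimes v_j\otimes v_k$ and compare coefficients. Recall that $\X$ raises both module parameters by one, and that its image of a basis vector is a sum of two terms which differ by which of the two touched basis indices is incremented. Hence each of $(\X\otimes\Id)\circ(\Id\otimes\X)$ and $(\Id\otimes\X)\circ(\X\otimes\Id)$ produces four terms.

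The key combinatorial observation I would record first is that, starting from $v_i\otimes v_j\otimes v_k$, both orders land in the span of the \emph{same} four basis vectors, namely $v_i\otimes v_{j+2}\otimes v_k$, $v_{i+1}\otimes v_{j+1}\otimes v_k$, $v_i\otimes v_{j+1}\otimes v_{k+1}$ and $v_{i+1}\otimes v_j\otimes v_{k+1}$ (with the convention $v_{2{\m}+1}=0$). Moreover these four target vectors are pairwise distinct, and within each order each of them is reached by exactly one of the four index-raising paths. Consequently no terms ever need to be collected, and the Lemma reduces to four separate scalar identities, one per target vector.

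Then I would verify the four coefficient equalities, each of which is a product of a power of $\qr$ with two quantum integers $\qn{\cdot}$. For the target $v_{i+1}\otimes v_j\otimes v_{k+1}$ (the two raised strands being disjoint) both orders give $\qr^{\gamma+j-k-3}\qn{\alpha-2-j}\qn{\alpha-1-j}$; for $v_i\otimes v_{j+2}\otimes v_k$ both give $\qr^{\alpha+i-j-4}\qn{\beta-1-i}\qn{\gamma-1-k}$; and for the two mixed targets one checks likewise that the parameter/index shift produced by the first $\X$ in one order is exactly compensated by the corresponding shift in the other, so the monomials agree term by term.

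I expect the main (and essentially only) obstacle to be bookkeeping: keeping straight, at the second step, which of the three module parameters $\beta-1,\alpha-2,\gamma-1$ has been incremented and which running index $i,j,k$ has been raised, since the exponents in Lemma \ref{lem:X} depend on both. It is worth noting that no quantum-integer identity such as $\qn{x+z}\qn{y+z}-\qn{x}\qn{y}=\qn{x+y+z}\qn{z}$ is required here, precisely because the four target vectors are distinct and each receives a single monomial from each order, so the equalities hold monomial by monomial. Conceptually this reflects the fact that $\X$ is a single $\vst$-coloured rung joining two adjacent strands, and the Lemma asserts merely that a rung between strands $1,2$ and a rung between strands $2,3$ may be exchanged in height.
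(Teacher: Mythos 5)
Your proof is correct, and your coefficient claims check out, but the route is genuinely different from the paper's. The paper disposes of Lemma \ref{lem:comX} in one line of structure: it takes the second identity of Lemma \ref{lem:com} --- which says that the two orders of grafting two $\vst$-legs onto the middle strand, as maps $V_{\alpha-2}\to\vst\otimes V_\alpha\otimes\vst$, agree --- and contracts the two outer $\vst$-legs onto the $\beta$- and $\gamma$-strands with the appropriate $Z$-type morphisms, so that the two sides become exactly $(\X\otimes\Id)\circ(\Id\otimes\X)$ and $(\Id\otimes\X)\circ(\X\otimes\Id)$; no coefficient is ever computed. You instead expand both composites on a basis vector $v_i\otimes v_j\otimes v_k$ via the explicit formula of Lemma \ref{lem:X}. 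Your key observation --- that the four index-raising paths in each order land on four pairwise distinct targets, so the identity holds monomial by monomial with no cancellation and no need for the identity $\qn{x+z}\qn{y+z}-\qn{x}\qn{y}=\qn{x+y+z}\qn{z}$ --- is right, and the two sample coefficients you record are what one gets: for $v_{i+1}\otimes v_j\otimes v_{k+1}$ both orders give $\qr^{\gamma+j-k-3}\qn{\alpha-2-j}\qn{\alpha-1-j}$, and for $v_i\otimes v_{j+2}\otimes v_k$ both give $\qr^{\alpha+i-j-4}\qn{\beta-1-i}\qn{\gamma-1-k}$; the only delicate point is the one you flag, namely that the first $\X$ applied raises the middle module parameter by one before the second is applied. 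The paper's argument is shorter, reuses an already-proved lemma, and makes the ``rungs on disjoint pairs of strands slide past each other'' picture literal; yours is more elementary, self-contained given Lemma \ref{lem:X}, and has the side benefit of exhibiting the matrix of the composite explicitly.
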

\begin{proof}
  The proof follows from composing both side of the second equality of Lemma
  \ref{lem:com} with
  $$\Zv+{\beta}{\beta-1}\vst \otimes \Id_{V_\alpha}\otimes
  \Zv+{\gamma}\vst{\gamma-1}.$$
\end{proof}
\begin{proposition}\label{P:rot} For all $ (\alpha,\beta,\gamma)\in
  (\C\setminus\Z)^3$ with
  $\beta+\gamma-\alpha=k\in\{-2{\m},-2{\m}+2,\ldots,2{\m}\}$ we have
  $$\rot\left(\Yn{k}\alpha\beta\gamma\right)=
  \Yn{\hspace*{-2ex}k}{-\beta}\gamma{-\alpha}$$ where
  $\rot{\colon\thinspace}H_\alpha^{\beta,\gamma}\to H_{-\beta}^{\gamma,-\alpha}$ is given in
  Equation \eqref{eq:rot}.
\end{proposition}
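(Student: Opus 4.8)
The plan is to reduce the statement to the vanishing of a single scalar and then prove it by induction on the number $n=\m+k$ of $\X$-rungs used to build the morphism. Throughout, write $Y(\alpha,\beta,\gamma)\in H_\alpha^{\beta,\gamma}$ for the basis morphism constructed above (so that its upper index is $\beta+\gamma-\alpha$), so the assertion reads $\rot(Y(\alpha,\beta,\gamma))=Y(-\beta,\gamma,-\alpha)$. By the fusion rule \eqref{eq:Va.Vb}, both $H_\alpha^{\beta,\gamma}$ and the target space $H_{-\beta}^{\gamma,-\alpha}$ are one-dimensional, and the map $\rot$ of \eqref{eq:rot} is an isomorphism between them; hence $\rot(Y(\alpha,\beta,\gamma))$ is automatically a scalar multiple of $Y(-\beta,\gamma,-\alpha)$, and the entire content of the proposition is that this scalar equals $1$.

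First I would record the base case. When $n=0$, i.e. $\beta+\gamma-\alpha=-2\m$, the morphism $Y(\alpha,\beta,\gamma)$ is the lowest vertex and the desired identity is exactly Proposition \ref{prop:cycl-2M}. For the inductive step I would peel off the top rung, writing $Y(\alpha,\beta,\gamma)=\X\circ Y(\alpha,\beta-1,\gamma-1)$ with the $\X$ acting on the two output strands; this follows from the definition $Y(\alpha,\beta,\gamma)=\X^{\circ n}\circ Y(\alpha,\beta-n,\gamma-n)$ (the last factor being the lowest vertex) together with the analogous factorisation of $Y(\alpha,\beta-1,\gamma-1)$, which carries $n-1$ rungs.

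The heart of the argument is a ``transport'' identity expressing $\rot(\X\circ f)$ through a single $\X$-rung applied after a rotation. Expanding $\rot$ by its definition \eqref{eq:rot}, the cup $b^\alpha$ is created at the bottom input while the left leg of the top rung is raised along the $V_\beta$-strand that the duality morphism $d^\beta$ then caps off. Sliding this rung around the cap and cup is precisely the turn-back identity of Lemma \ref{lem:turn}: it converts a rung acting on the $(\beta,\gamma)$-pair, once its $\beta$-leg is bent through the duality, into a rung acting on the rotated output pair $(\gamma,-\alpha)$, and at the same time produces the correct shift of the lower indices. Lemma \ref{lem:comX}, which lets $\X\otimes\Id$ and $\Id\otimes\X$ commute past one another, is then used to reorder the rungs and reconcile this index shift, so that the transported rung becomes the topmost rung of the standard ladder on the right-hand side. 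Applying the induction hypothesis to the resulting lower ladder and reassembling yields $\rot(Y(\alpha,\beta,\gamma))=\X\circ Y(-\beta,\gamma-1,-\alpha-1)=Y(-\beta,\gamma,-\alpha)$.

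The step I expect to be the main obstacle is exactly this transport identity: tracking the index shifts through the two duality morphisms that define $\rot$ and, above all, verifying that the rung re-emerges with coefficient precisely $1$ rather than some spurious quantum-integer factor $\qn{\cdot}$. This is what Lemmas \ref{lem:turn} and \ref{lem:comX} are tailored to guarantee, so the bulk of the work is an honest but essentially mechanical graphical computation checking that the normalisations match; the conceptual input is contained entirely in the base case and in these two commutation lemmas.
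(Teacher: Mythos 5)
Your proposal is correct and follows essentially the same route as the paper: the base case is Proposition \ref{prop:cycl-2M}, and the rungs are transported through the cup $b^\alpha$ using Lemma \ref{lem:turn} together with the commutation Lemma \ref{lem:comX}, after which they slide past $d^\beta$. The only cosmetic difference is that you peel off one $\X$-rung at a time by induction, whereas the paper iterates Lemma \ref{lem:turn} once to state the $n$-rung transport identity and then applies it in a single step.
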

\begin{proof}
  We first give a reformulation of Proposition \ref{prop:cycl-2M}: tensoring
  the equality with $\Id_{V_\beta}$ on the left and composing on the right
  with $b^\beta$, we have 
  $$\left(\Id\otimes\Yn{\hspace*{-2ex}-2{\m}}{-\beta}\gamma{-\alpha}\right)
  \circ b^\beta=
  \left(\Yn{\hspace*{-0.5ex}-2{\m}}{\alpha}{\beta}\gamma\otimes\Id\right)
  \circ b^\alpha$$
 for all $\alpha,\beta,\gamma\in (\C\setminus\Z)^3$ with
  $\beta+\gamma-\alpha=-2\m$.  
  Let $\alpha,\beta,\gamma\in\C\setminus\Z$ such that
  $\alpha+\beta+\gamma=2-2{\m}$ then from Lemmas \ref{lem:turn} and
  \ref{lem:comX} we have that for $p,q\in\N$, 
  $$
  (\X^{\circ p+1}\otimes\Id) \circ (\Id\otimes \X^{\circ q}) \circ
  \left(\Id\otimes\Yn{\hspace*{-3ex}-2{\m}}{1-\alpha}{\beta-1}\gamma\right)
  \circ b^{\alpha-1}$$ $$=(\X^{\circ p}\otimes\Id) \circ (\Id\otimes \X^{\circ q+1})
  \circ
  \left(\Id\otimes\Yn{\hspace*{-6ex}-2{\m}}{-\alpha}{\beta-1}{\gamma-1}\right)
  \circ b^{\alpha}.
  $$
  Then by induction, for any $n\in\N$ and for any
  $\alpha,\beta,\gamma\in\C\setminus\Z$ such that
  $\alpha+\beta+\gamma=2n-2{\m}$, one has
  $$
  (\X^{\circ n}\otimes\Id) \circ
  \left(\Id\otimes\Yn{\hspace*{-3ex}-2{\m}}{n-\alpha}{\beta-n}\gamma\right)
  \circ b^{\alpha-n}=(\Id\otimes \X^{\circ n}) \circ
  \left(\Id\otimes\Yn{\hspace*{-6ex}-2{\m}}{-\alpha}{\beta-n}{\gamma-n}\right)
  \circ b^{\alpha}.
  $$

  Therefore, for $n=k/2+{\m}\in\N$ we have
  $$
  \rot\left(\Yn{k}\alpha\beta\gamma\right)=(d^{\beta}\otimes\Id\otimes\Id)
  \circ(\Id\otimes \left(\X^{\circ
      n}\circ\Yn{\hspace*{-6ex}-2{\m}}\alpha{\beta-n}{\gamma-n} \right)
  \otimes\Id) \circ(\Id\otimes b^{\alpha})
  $$
  $$
  =(d^{\beta}\otimes\Id\otimes\Id)\circ (\Id\otimes\Id\otimes \X^{\circ n})
  \circ \left(\Id\otimes\Yn{\hspace*{-3ex}-2{\m}}{\alpha-n}{\beta}{\gamma-n}
    \otimes\Id\right) \circ(\Id\otimes b^{\alpha-n})
  $$
  $$
  =\X^{\circ n}\circ (d^{\beta}\otimes\Id\otimes\Id) \circ
  \left(\Id\otimes\Yn{\hspace*{-3ex}-2{\m}}{\alpha-n}{\beta}{\gamma-n}
    \otimes\Id\right) \circ(\Id\otimes b^{\alpha-n})
  $$
  $$
  =\X^{\circ n}\circ \Yn{\hspace*{-6ex}-2{\m}}{-\beta}{\gamma-n}{n-\alpha}=
  \Yn{\hspace*{-2ex}k}{-\beta}{\gamma}{-\alpha}
  $$
  where the second to last equality is given by Proposition \ref{prop:cycl-2M}.
\end{proof}

The cyclic isomorphisms allow us to define the basis $\Zn{k}\alpha\beta\gamma$
of $H^\alpha_{\beta,\gamma}$ in two equivalent way: if
$\alpha-\beta-\gamma=k$, let
$$
\Zn{k}\alpha\beta\gamma=(\Id\otimes d^\gamma)\circ
(\Yn{\hspace*{-2ex}k}{\beta}{\alpha}{-\gamma}\otimes\Id)
=(d^{-\beta}\otimes\Id)\circ
(\Id\otimes\Yn{\hspace*{-2ex}k}{\gamma}{-\beta}{\alpha}).
$$
Similarly, if $\alpha+\beta+\gamma=k$, we get a vector
$\omega^k(\alpha,\beta,\gamma)$, which forms a canonical basis of the symmetric
multiplicity module $H(\alpha,\beta,\gamma)$.

In what follows we consider ribbons
graphs with coupons colored by the elements $\omega^k(\alpha,\beta,\gamma)$.
For such a coupon $c$, Proposition \ref{P:rot} implies that we do not need to
know what edges are attached to the bottom of $c$ and what edges are attached
to its top.  Only the information of the cyclic ordering of these edges is
needed to compute $F$ or $F'$.

The choice of an half twist $\theta'$ (a family of endomorphisms whose square
are given by the twist) produces isomorphisms 
\begin{align*}
 H_{\gamma}^{\alpha,\beta}  \rightarrow H_{\gamma}^{\beta,\alpha},\text{ given by }
 f \mapsto
{\theta'_{\alpha}\theta'_{\beta}}{\theta'_{\gamma}}^{-1}
C_{V_\alpha,V_\beta}\circ f
\end{align*}
for details see \cite{GPT2}.  These isomorphism produces isomorphisms $H(\alpha,\beta,\gamma)\rightarrow H(\beta,\alpha,\gamma)$.  The following lemma shows that the bases we have defined above are compatible with these isomorphisms.

\begin{lemma} We can define an half twist on the set of typical modules
  $\{V_\alpha\}_{\alpha\in B}$ by the formula
  $$\theta'_\alpha=\qr^{(\alpha/2)^2-\m^2}\Id_{V_\alpha}.$$
  
  Let $\alpha,\beta,\alpha+\beta\in\C\setminus\Z$, then
  \begin{equation}
    \label{eq:miror0}
    C_{V_\alpha,V_\beta}\circ\Yn{\hspace*{0ex}-2{\m}}{*}{\alpha}{\beta}
    =\qr^{\frac12(\alpha+2\m)(\beta+2\m)}
    \Yn{\hspace*{0ex}-2{\m}}{*}{\beta}{\alpha},  
  \end{equation}
  \begin{equation}
    \label{E:br2***}
  C_{V_\alpha,\vst}\circ\Yv+{\alpha-1}\alpha\vst
  =\qr^{-\frac12\alpha}\qr^\m\Yv+{\alpha-1}\vst\alpha,
  \end{equation}
   \begin{equation}
    \label{E:br3***}
  C_{\vst,V_\alpha}\circ\Yv+{\alpha-1}\vst\alpha
  =\qr^{-\frac12\alpha}\qr^\m\Yv+{\alpha-1}\alpha\vst,
 \end{equation}
   \begin{equation}
    \label{E:br4***}
  C_{V_{\alpha},V_{\beta}}\circ \X
  =\qr^{(-\alpha-\beta+1)/2}\X\circ C_{V_{\alpha-1},V_{\beta-1}},
  \end{equation}
  and for $n=\m+k$,
  \begin{equation}
    \label{eq:miror}
    C_{V_\alpha,V_\beta}\circ
    \Yn{\hspace*{-5ex}2{k}}{\alpha+\beta-2k}{\alpha}{\beta}
    =\dfrac{\theta'_{\alpha+\beta-2k}}{\theta'_{\alpha}\theta'_{\beta}}
    \Yn{\hspace*{-5ex}2{k}}{\alpha+\beta-2k}{\beta}{\alpha}.
  \end{equation}
\end{lemma}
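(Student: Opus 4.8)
The plan is to deal with the half-twist formula first, and then to recognize every braiding identity as an instance of the general half-twist symmetry $H^{\alpha,\beta}_\gamma\to H^{\beta,\alpha}_\gamma$ of \cite{GPT2}, the only real work being to pin down the scalar on the canonical bases. To check that $\theta'_\alpha=\qr^{(\alpha/2)^2-\m^2}$ is a square root of the twist, I would apply the formula \eqref{eq:twist} for $\theta_{V_\alpha}^{-1}$ to the highest weight vector $v_0$ of $V_\alpha$. Since $Ev_0=0$, only the $n=0$ summand survives, so $\theta_{V_\alpha}^{-1}v_0=K^{\ro-1}\qr^{-H^2/2}v_0$; using $Hv_0=(\alpha+2\m)v_0$ and $\ro-1=2\m$ this is the scalar $\qr^{2\m(\alpha+2\m)-(\alpha+2\m)^2/2}=\qr^{2\m^2-\alpha^2/2}$. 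Hence $\theta_{V_\alpha}=\qr^{\alpha^2/2-2\m^2}=(\theta'_\alpha)^2$, which is all that must be verified since $\theta'_\alpha$ is a scalar. The same computation on $\vst$, which in our labelling is $V_{1-2\m}$, shows the formula is consistent there as well.

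For the base identities \eqref{eq:miror0}, \eqref{E:br2***} and \eqref{E:br3***}, I would first note that each morphism space involved is one-dimensional, by the splitting \eqref{eq:vst.Va} and the character formula; the two sides are therefore proportional and it suffices to compute the scalar on a single vector. I would evaluate on the highest weight vector $v_0$ for \eqref{eq:miror0} and on the lowest weight vector $v_{2\m}$ for \eqref{E:br2***} and \eqref{E:br3***}. In each case the braiding $c=\tau\circ R$ collapses, because one tensor factor is annihilated by $E$ (the vector $v_0$) or by $F$ (the vector $v_1\in\vst$, resp.\ $v_{2\m}\in V_\alpha$), so that the sum in \eqref{eq:R} keeps only its $n=0$ term $\qr^{H\otimes H/2}$. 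This immediately yields $\qr^{(\alpha+2\m)(\beta+2\m)/2}$ in \eqref{eq:miror0}, and, feeding in the explicit vertex formulas given above, the factor $\qr^{-\alpha/2}\qr^{\m}$ in \eqref{E:br2***} and \eqref{E:br3***}. One then observes that in every case the scalar is exactly $\theta'_s/(\theta'_u\theta'_w)$ for $s$ the source and $u,w$ the two targets, which is precisely the half-twist symmetry read off on these bases.

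The crux is \eqref{E:br4***}, because here the source and target decompose into $\ro$ isotypic components and the relevant $\Hom$ space is $\ro$-dimensional, so a single evaluation no longer determines the map. Instead I would substitute the definition $\X=\qn1^{-1}(\Id\otimes d^\vst\otimes\Id)\circ(\ldots)$ and slide $C_{V_\alpha,V_\beta}$ down through the two trivalent vertices using naturality of the braiding together with the hexagon axiom. Since the two inner $\vst$-strands are capped off by $d^\vst$, the crossing of the outer strands $V_\alpha$ and $V_\beta$ factors through the braidings of $\vst$ past $V_\alpha$ and past $V_\beta$, whose scalars are supplied by \eqref{E:br2***} and \eqref{E:br3***}; collecting them produces the factor $\qr^{(-\alpha-\beta+1)/2}$ and the swapped map $\X\circ C_{V_{\alpha-1},V_{\beta-1}}$. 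As a check one may instead verify \eqref{E:br4***} by a direct matrix computation from the explicit formula of Lemma \ref{lem:X} and the $R$-matrix \eqref{eq:R}.

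Finally, \eqref{eq:miror} follows by assembling the previous steps. With $n=\m+k$, the basis morphism of the statement is by definition $\X^{\circ n}$ composed with the leading ($k=-\m$) vertex, so I would push $C_{V_\alpha,V_\beta}$ through the $n$ copies of $\X$ by iterating \eqref{E:br4***}; the $j$-th step lowers the indices to $\alpha-j,\beta-j$ and contributes $\qr^{(-\alpha-\beta+2j+1)/2}$, so the $n$ steps accumulate $\qr^{n(n-\alpha-\beta)/2}$ and leave $C_{V_{\alpha-n},V_{\beta-n}}$ acting on the leading vertex. Applying \eqref{eq:miror0} to this remaining braiding produces $\qr^{(\alpha-n+2\m)(\beta-n+2\m)/2}$ together with the swapped leading vertex, and reassembling $\X^{\circ n}$ with it gives the right-hand basis morphism with $\alpha$ and $\beta$ interchanged. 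A short exponent computation then shows that the product of the two accumulated scalars equals $\theta'_{\alpha+\beta-2k}/(\theta'_\alpha\theta'_\beta)$, as claimed. I expect the two genuine obstacles to be the hexagon bookkeeping in \eqref{E:br4***} and the careful tracking of index shifts when iterating it.
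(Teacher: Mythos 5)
Your proposal is correct and follows essentially the same route as the paper: the half twist is checked on the highest weight vector via \eqref{eq:twist}, the scalars in \eqref{eq:miror0}--\eqref{E:br3***} are pinned down by evaluating on extremal weight vectors where the $R$-matrix \eqref{eq:R} reduces to its Cartan part (the paper reaches \eqref{E:br2***} slightly differently, by first braiding the $\Yv-{\alpha+1}\alpha\vst$ vertex on highest weight vectors and then dualizing through \eqref{eq:dual_mixte}, whereas you evaluate on lowest weight vectors directly -- both work since the relevant $\Hom$ spaces are one-dimensional), \eqref{E:br4***} is obtained by sliding the braiding through the definition of $\X$ exactly as in the paper's factorization, and \eqref{eq:miror} by iterating \eqref{E:br4***} and applying \eqref{eq:miror0}. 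The accumulated exponent $\qr^{n(n-\alpha-\beta)/2}$ and the final identification with $\theta'_{\alpha+\beta-2k}/(\theta'_\alpha\theta'_\beta)$ both check out.
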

\begin{proof}
  From Formula \eqref{eq:twist}, we have that
  $\theta_{V_\alpha}$ acts on the highest weight vector $v_0\in V_\alpha$ as
  $K^{-2\m}\qr^{H^2/2}v_0=\qr^{-2\m(\alpha+2\m)+(\alpha+2\m)^2/2}v_0
  =\qr^{\alpha^2/2-2\m^2}v_0$.  Hence $\theta'$ is an half twist.

  Only the ``Cartan'' part $\qr^{H\otimes H}$ of the R-matrix \eqref{eq:R}
  acts non-trivially on the tensor product of two highest weight vectors. Hence
  $$C_{V_\alpha,V_\beta}(v_0\otimes
  v_0)=\qr^{\frac12(\alpha+2\m)(\beta+2\m)}v_0\otimes v_0\in V_\beta\otimes
  V_\alpha.$$  But $v_0\otimes
  v_0=\Yn{\hspace*{0ex}-2{\m}}{*}{\alpha}{\beta}(v_0)$ and this gives Equation
  \eqref{eq:miror0}.
  
  Similarly, $C_{V_\alpha,\vst}\circ\Yv-{\alpha+1}\alpha\vst
  =\qr^{\frac12\alpha}\qr^\m\Yv-{\alpha+1}\vst\alpha$ and Equation
  \eqref{E:br2***} follows from the duality of \eqref{eq:dual_mixte}.
  Equation \eqref{E:br2***} is proved with analogous techniques.

  To prove Equation \eqref{E:br4***}, 
  consider the isotopy $\epsh{fig18}{8ex}=\epsh{fig19}{8ex}$ which
  illustrates the fact that $\qn1C_{V_{\alpha},V_{\beta}}\circ \X$ is equal to
  \begin{equation}\label{E:erte}
  \left(\Id\otimes
    \left(\Zv+\alpha{\alpha-1}\vst\circ C^{-1}_{V_{\alpha-1},\vst}\right)\right)
  \circ \left(\left(C_{\vst,V_{\beta}}\circ\Yv+{\beta-1}\vst\beta\right)
    \otimes\Id\right)\circ C_{V_{\alpha-1},V_{\beta-1}}.
    \end{equation}  
    Here Equation \eqref{E:br3***} can be used to remove the braiding
    $C_{\vst,V_{\beta}}$ in \eqref{E:erte}.  Now Equation \eqref{E:br2***}
    implies that $\Zv+\alpha{\alpha-1}\vst\circ C^{-1}_{V_{\alpha-1},\vst}
    =\qr^{-\alpha/2}\qr^\m\Zv+\alpha\vst{\alpha-1}$ and Equation
    \eqref{E:br4***} follows.

  Finally, Equation \eqref{eq:miror} follows from \\
  $
  \begin{array}{rl}
    C_{V_{\alpha},V_{\beta}}\circ \X^n=&\qr^{-\frac12
    \big((\alpha+\beta-1)+(\alpha+\beta-3)+\cdots+(\alpha+\beta-2n+1)\big)}
  \X^n\circ C_{V_{\alpha-n},V_{\beta-n}} \\
  =&\qr^{-\frac12n(\alpha+\beta-n)}\X^n\circ C_{V_{\alpha-n},V_{\beta-n}}.
  \end{array}$\\
 Composing this equation with
  $\Yn{\hspace*{-5ex}-2{\m}}{\alpha+\beta-2k}{\alpha-n}{\beta-n}$ and applying
  Equation \eqref{eq:miror0} the result follows. 
\end{proof}

\subsection{A Laurent polynomial invariant of planar trivalent graphs}
In this section we discuss how to defined maps lead to invariant of planar
graphs that are in some sense Laurent polynomial.

Let $\Ga\subset \R\times[0,1]$ be a planar uni-tri-valent framed graph with
trivalent vertices marked by heights, that are integers in
$\{-2{\m},-2{\m}+2,\ldots,2{\m}-2,2{\m}\}$ and whose set $\Ga_u$ of
univalent vertices is included in $\R\times\{0,1\}$.  The heights can be seen
as a $0$-chain $h$ on the CW-complex $\Ga$ relative to $\Ga_u$.  A coloring of
$\Ga$ is a complex $1$-chain $c\in C_1(\Ga,\Ga_u;\C)$ such that its boundary
is $\delta c=h$.  Let $\col(\Ga)$ be the affine space of coloring of
$\Ga$ and $\col_0(\Ga)$ be the subset of coloring that have no values (no
coefficients) in $\Z$.  

Since a coloring is a realization of $h$ as a boundary we have the set 
$\col(\Ga)$ is nonempty if and only if $[h]=0\in H_0(\Ga,\Ga_u;\Z)$.  This
means that the sum of the heights of any connected component of $\Ga$ that does
not meet $\Ga_u$ is zero.  Let us assume that this is true and let $n=\dim
H_1(\Ga,\Ga_u;\C)$.  Then $\col(\Ga)$ is an affine space over
$H_1(\Ga,\Ga_u;\C)$.  We then choose a family of $n$ edges $e_1,\ldots ,e_n$ of $\Ga$.  We assume that the union of the interior of these edges has a complement in $\Ga/\Ga_u$ which is simply connected.  Then the map
$$\col(\Ga)\to\C^n, \text{ given by } c\mapsto(c(e_1),\ldots,c(e_n))$$
 is bijective.

We will also suppose that every edge of $\Ga$ is in the support of a relative
cycle.  Hence, any coloring that takes an integer value on an edge can be
infinitesimally modified to a coloring of $\col_0(\Ga)$.  Then
$\col_0(\Ga)$ is an open dense subset of
$\col(\Ga)$.

If $c\in\col_0(\Ga)$, we can form a $\cat$-colored ribbon graph $c(\Ga)$ as follows.
First, we choose an orientation of the edges of $\Ga$.  Color each oriented edge
$e$ of $\Ga$ with $V_{c(e)}$.  Any
trivalent vertex of $\Ga$ with height $k$ is replaced with a trivalent
coupon containing the morphism $\omega_k$ previously defined.  Positioning the
edges around the coupon involves some choice but the value under $F$ (or $F'$
if $\Ga$ is closed i.e. has no univalent vertices) of the resulting ribbon
graph does not depend of these choices.

\begin{theorem}\label{T:Laurent}
  Let $\Ga$ be a planar uni-tri-valent framed graph with height $h$ as above.
  Also, as above choose $n$ edges $e_1,\ldots ,e_n$ of $\Ga$.  Suppose that
  $\Ga$ is not a circle then for any coloring $c\in \col_0(\Ga)$ define $x(c)$
  as follows:
  \begin{enumerate}
  \item if $\Ga$ has univalent vertices, then let $x(c)$ be any fixed
    coefficient of the matrix in the canonical bases of $F(c(\Ga))$,
  \item else, $\Ga$ is closed and let $x(c)$ be $F'(c(\Ga))$.
  \end{enumerate}
  Then there exist a unique Laurent polynomial
  $$P(q_1,\ldots,q_n)\in\Z[{\qr}][q_1^{\pm1},\ldots,q_n^{\pm1}]$$ such that
  for any coloring $c\in \col_0(\Ga)$,
  $x(c)=P({\qr}^{c(e_1)},\ldots,{\qr}^{c(e_n)})$.
\end{theorem}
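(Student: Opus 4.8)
The plan is to compute $F(c(\Ga))$ (respectively $F'(c(\Ga))$) by slicing the diagram with a generic height function and controlling each elementary morphism entrywise. Since $\Ga$ is planar and carries the blackboard framing, an embedded representative has no crossings and no kinks, so the Reshetikhin--Turaev functor produces neither braidings $c_{V,W}$ nor twists $\theta_V$: the ribbon graph $c(\Ga)$ is a vertical composite of tensor products of identities, cups $b^\alpha,b^\vst$, caps $d^\alpha,d^\vst$, and trivalent coupons $\omega_k$. The absence of twists is exactly what makes the statement plausible, because $\theta_{V_\alpha}$ acts by $\qr^{\alpha^2/2-2\m^2}\Id$, which is \emph{not} a monomial in $\qr^\alpha$. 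By Proposition~\ref{P:rot} the scalar $x(c)$ does not depend on how the edges are arranged around the coupons, so it is well defined.

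The computational heart is the following claim. For a tensor product of typical modules, write $\mathcal A$ for the ring $\Z[\qr][\qr^{\pm\alpha_1},\dots]$ of Laurent polynomials in the variables $\qr^{\alpha_j}$ over $\Z[\qr]$, the $\alpha_j$ ranging over the colors $c(e)$ of the edges meeting a given morphism. I claim that, in the bases $\{v_{i_1}\otimes\cdots\}$, every matrix entry of each elementary morphism lies in $\mathcal A$ as a function of the colors. This is read off the explicit formulas: for the dualities it follows from \eqref{eq:cgc-d} and \eqref{eq:cgc-b} (and their $\vst$-analogues), since for a fixed matrix position the exponent of $\alpha$ is the constant weight index and the remaining power of $\qr$ is an integer; for the coupons one uses that $\omega_k$ is assembled from the lowest-weight coupon of Section~\ref{S:mult}, whose entries $\qr^{(n-k)(k-\alpha-2\m)}\qb nk$ (via \eqref{E:DeltaF}) lie in $\mathcal A$, together with $\X^{\circ n}$, whose entries lie in $\mathcal A$ by Lemma~\ref{lem:X} — the apparent denominator $\qn1$ having already been cleared there. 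No entry involves a half-integral power of $\qr$ or a term quadratic in a color. Because matrix multiplication and Kronecker product keep entries inside $\mathcal A$, and $F$ is a tensor functor realized as the composite above, every entry of $F(c(\Ga))$ lies in the ring $\mathcal A$ attached to all edges of $\Ga$.

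Next I change coordinates from ``all edges'' to the chosen $e_1,\dots,e_n$. The bijection $\col(\Ga)\xrightarrow{\sim}\C^n$, $c\mapsto(c(e_1),\dots,c(e_n))$, together with $\delta c=h$ and integrality of $h$, expresses every color as $c(e)=b_e+\sum_i a_{e,i}c(e_i)$ with $a_{e,i}\in\Z$ and $b_e\in\Z$: the constants are integers because, the complement of the $e_i$ being a relative tree, back-substitution in $\delta c=h$ begins from integer heights and never leaves $\Z$. Hence $\qr^{c(e)}=\qr^{b_e}\prod_i q_i^{a_{e,i}}$ is a monomial in $q_i=\qr^{c(e_i)}$ with coefficient in $\Z[\qr]$, and substitution turns every entry of $F(c(\Ga))$ into an element of $\Z[\qr][q_1^{\pm1},\dots,q_n^{\pm1}]$. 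In case (1) this is the asserted $P$. Uniqueness follows because, as $c$ ranges over the open dense set $\col_0(\Ga)$, the tuple $(c(e_1),\dots,c(e_n))$ covers an open subset of $\C^n$ and $z\mapsto\qr^z$ is an open map, so $(q_1,\dots,q_n)$ covers an open subset of $(\C^*)^n$ on which only the zero Laurent polynomial vanishes.

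The closed case (2) is where the real difficulty lies. Choosing a cutting edge $e$ presents $\Ga$ as the closure of a planar $(1,1)$-tangle $T_e$ with $F(T_e)=\lambda_e\Id_{V_{c(e)}}$, and case (1) applied to the scalar entry $(F(T_e))_{0,0}$ gives $\lambda_e\in\Z[\qr][q_1^{\pm1},\dots,q_n^{\pm1}]$. By definition $x(c)=F'(c(\Ga))=\qd(V_{c(e)})\,\lambda_e=\lambda_e/\qD(q_e)$, which is \emph{a priori} only a rational function (here $\qD(q_e)$ is a genuine Laurent polynomial by \eqref{eq:qD}); the content is that the denominator divides $\lambda_e$. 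I would prove this using that $F'(c(\Ga))$ is independent of the cutting edge: for two edges $e,e'$ one obtains $\lambda_e\,\qD(q_{e'})=\lambda_{e'}\,\qD(q_e)$ as Laurent polynomials, since both sides agree on the dense set $\col_0(\Ga)$. As the factors of $\qD(q_e)$ are governed by the monomial $q_e$, a suitable choice of independent cutting edges makes $\qD(q_e)$ and $\qD(q_{e'})$ coprime in the factorial ring of Laurent polynomials over the fraction field $\mathbb{Q}(\qr)$ of $\Z[\qr]$, which forces $\qD(q_e)\mid\lambda_e$; hence $x(c)$ is a Laurent polynomial and the uniqueness argument above applies verbatim. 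Arranging this divisibility, i.e.\ the coprimality of the denominators coming from distinct cuts, is the main obstacle; everything else is bookkeeping of the explicit formulas already established.
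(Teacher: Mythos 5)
Your proof follows essentially the same route as the paper's: entrywise verification that the elementary morphisms ($b^*$, $d^*$, the trivalent coupons, $\X$) have Laurent-polynomial matrix coefficients, uniqueness via density of $\col_0(\Ga)$ in $(\C^*)^n$, and, in the closed case, comparison of two cutting presentations to cancel the denominator coming from $\qd(V_{c(e)})$. The step you single out as "the main obstacle" is exactly the paper's final argument and is in fact immediate: cutting on $e_1$ and on $e_2$ gives $P_1(q_2^\ro-q_2^{-\ro})=P_2(q_1^\ro-q_1^{-\ro})$ with the two denominators in disjoint variables (hence coprime over $\mathbb{Q}(\qr)$), and since $q_1^\ro-q_1^{-\ro}$ is monic the quotient retains coefficients in $\Z[\qr]$.
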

\begin{proof}
  First consider the case $\Ga_u\neq\emptyset$.  For the existence of the
  Laurent polynomials, it is sufficient to remark that it is true for the
  elementary morphisms $\Yn{}{*}{*}{*}$, $\Zn{}{*}{*}{*}$ and $b^*$, $d^*$
  from \eqref{eq:def_duality}, \eqref{eq:def_dualityy}.  Now the uniqueness
  follows from the general fact that a Laurent polynomials in $n$ variables
  with complex coefficients which vanishes on an open dense subset of
  $(\C^*)^n$ must be $0$.

  In the other case, $\Ga$ is a closed graph and $x(c)=F'(c(\Ga))$.  To
  compute $F'(c(\Ga))$ we open $c(\Ga)$ on an edge $e$ to get a cutting
  presentation of $c(\Ga)$.  The invariant of this cutting presentation is
  then a scalar times the identity of $V_ {c(e)}$.  By the previous argument,
  this scalar is given by a Laurent polynomial $P_e$.  $F'(c(\Ga))$ is by
  definition this scalar times $\qd(V_{c(e)})=\qD({\qr^{c(e)}})^{-1}=
  \dfrac{\Qn{\qr^{c(e)}}}{\Qn{(\qr^{c(e)})^\ro}}$.  This denominator seems to
  be a problem but in fact it must cancel.  Indeed, as $\Ga$ is not a circle,
  we have $n\geq2$.  But $F'(c(\Ga))$ does not depend on where we cut and open
  $c(\Ga)$
  (see \cite[Theorem 3 and Section 6.2]{GPT}).  
  Hence cutting alternatively on the edges $e_1$, and then $e_2$, we get that
  there exits polynomials $P_1,P_2\in\Z[{\qr}][q_1^{\pm1},\ldots,q_n^{\pm1}]$
  such that $F'(c(\Ga))= \dfrac{P_i({\qr}^{c(e_1)},\ldots,{\qr}^{c(e_n)})}
  {{(\qr^{c(e_i)})^\ro}-{(\qr^{c(e_i)})^{-\ro}}}$ with
  $i\in\{1,2\}$ and thus
  $\dfrac{P_1}{q_1^\ro-q_1^{-\ro}}=\dfrac{P_2}{q_2^\ro-q_2^{-\ro}}$.  Even if
  $\Z[\qr]$ is not a unique factorization domain, one easily see that this
  last equality implies that
  $\frac{1}{q_1^\ro-q_1^{-\ro}}P_1\in\Z[{\qr}][q_1^{\pm1},\ldots,q_n^{\pm1}]$.
\end{proof}
We now use this theorem applied to the tetrahedron graph to give an alternative definition of the polynomials $\J$.  This will prove in particular that their
coefficients are in $\Z[\qr]$.  As we will see Theorem
\ref{Th:formula} implies that this definition coincides with
the formulas given in Section \ref{S:J}.

For $(i,j,k)\in\H_\m$ we consider the planar $1$-skeleton $\Ga$ of the
tetrahedron with heights as follows.  Let  $v_1, v_2, v_3, v_4$ be the vertices of $\Ga$:
$\Ga= \epsh{fig03}{8ex} \put(-30,2){$\ms v_1$}\put(-44,13){$\ms
  v_2$}\put(-43,-13){$\ms v_3$}\put(-12,7){$\ms v_4$}$.  Assign  $v_1, v_2, v_3, v_4$ the heights $2i, 2j, 2k, -2i-2j-2k$, respectively.    

\begin{definition}\label{D:6j} Let
  $\J_{i,j,k}\in\Z[\qr][q_1^{\pm1},q_2^{\pm1},q_3^{\pm1}]=\RL$ be the 
  Laurent polynomial of Theorem \ref{T:Laurent} associated to $\Ga$ and the edges $(e_1,e_2,e_3)=(v_2v_3,v_3v_1,v_1v_2)$.  Thus, 
  $\J_{i,j,k}$ is the unique Laurent polynomial such that for all
  $\alpha,\beta,\gamma\in\C$ with
  $\alpha,\beta,\gamma,\alpha-\beta,\beta-\gamma,\gamma-\alpha\notin\Z$,
  \begin{equation}
  \label{E:J-sjv}
  \sjv\alpha\beta\gamma{2i}{2j}{2k}
  =\J_{i,j,k}(\qr^\alpha,\qr^\beta,\qr^\gamma)
  \end{equation}
  where $\sjv\alpha\beta\gamma{2i}{2j}{2k}=F'(c(\Ga))$ is the invariant of
  the graph $\Ga= \epsh{fig03}{8ex} \put(-27,0){$\ms v_1$}\put(-41,13){$\ms
  v_2$}\put(-43,-13){$\ms v_3$}\put(-9,5){$\ms v_4$}$ colored with
  $$
  \begin{array}{lll}
    c(v_2v_3)=\alpha&c(v_3v_1)=\beta&c(v_1v_2)=\gamma\\
    c(v_1v_4)=\beta-\gamma-2i\quad&c(v_2v_4)=\gamma-\alpha-2j\quad&
    c(v_3v_4)=\alpha-\beta-2k 
  \end{array}
  $$
  \begin{figure}[t,b]
    \framebox{\begin{minipage}[c]{1.0\linewidth}
     $$
     \sjv\alpha\beta\gamma{2i}{2j}{2k}
     =\epsh{fig03}{18ex}\put(-85,2){$\alpha$}
     \put(-55,-22){$\beta$}\put(-52,20){$\gamma$}
     \put(-60,0){$2i$}\put(-90,30){$2j$}\put(-90,-30){$2k$} =\sjtop
     {j_1}{j_2}{j_3}{j_4}{j_5}{j_6} =\,\epsh{fig02}{20ex}\put(-70,15){$j_1$}
     \put(-55,0){$j_2$}\put(-60,-25){$j_3$}
     \put(-35,-15){$j_4$}\put(-48,23){$j_6$}\put(-13,3){$j_5$}
     $$
     $$
     \text{ with }\quad
     \begin{array}{lll}
       j_1=\alpha&j_2=-\beta&j_6=-\gamma\\
       j_4=\beta-\gamma-2i\quad&j_5=\alpha-\gamma+2j\quad&j_3=\alpha-\beta-2k
     \end{array}
     $$
   \end{minipage}}
  \caption{The two notations for the $6j$-symbols
    $\J_{i,j,k}(\qr^\alpha,\qr^\beta,\qr^\gamma)$}\label{F:6j} 
\end{figure}
If $|i|,|j|,|k|$ or $|i+j+k|$ is $>{\m}$, then by convention, set
$\mathsmall{\sjv\alpha\beta\gamma{2i}{2j}{2k}}=0$.
\end{definition}
Here we change from the usual notation $\sjtop
{j_1}{j_2}{j_3}{j_4}{j_5}{j_6}$ of \cite{GPT2} to the notation
$\sjv\alpha\beta\gamma{2i}{2j}{2k}$. We use the new notation because it is closely related to the polynomials $J$ and easily 
adapts to the computations below.  The correspondence between the two notations is
given in Figure \ref{F:6j}.

\subsection{Computations of the $6j$-symbols}
The next proposition establishes the unexpected fact that the family
of bases of the multiplicity modules constructed in Section \ref{S:mult} is
self dual. 
Proposition \ref{prop:dual} is illustrated by Figure \ref{F:typic_dual} where
the left hand side may be seen as a cutting presentation of the
$\Theta$-graph.
\begin{figure}[t,b]
  \centering \hspace{10pt} $\epsh{fig12}{16ex}$
  \put(-10,37){\ms{\alpha}}\put(-7,24){\ms{k}}\put(-9,-24){\ms{-k}}
  \put(-33,0){\ms{\gamma}} \put(1,0){\ms{\beta}} 
  \hspace{28pt}
  $=\hspace{3pt}\qd(\alpha)^{-1}\epsh{fig10}{16ex}$ \put(-7,5){\ms{\alpha}}
  \hspace{10pt}
\caption{The duality for
  $H(V_\alpha,V_{-\beta},V_{-\gamma})$}\label{F:typic_dual}
\end{figure}
\begin{proposition}\label{prop:dual} Let $\alpha=\beta+\gamma-2{\m}$ then
  $$
  \Zn{-2{\m}}\alpha\beta\gamma\hspace*{2ex}\circ \X^{\circ 2{\m}}\circ
  \Yn{\hspace*{-10ex}-2{\m}}\alpha{\beta-2{\m}}{\gamma-2{\m}}
  =\qd({\alpha})^{-1}\Id_{V_\alpha}
  $$
  and consequently, if $\alpha+\beta+\gamma=k$ then 
  $$
  \left\langle\omega^k(\alpha,\beta,\gamma),
    \omega^{-k}(-\gamma,-\beta,-\alpha)\right\rangle=1
  $$
  where the duality $H(\alpha,\beta,\gamma)\otimes
  H(-\gamma,-\beta,-\alpha)\to\C$ is obtained by the evaluation of $F'$ on the
  colored $\Theta$-graph $\epsh{fig13}{6ex}$ 
  \put(-20,-9){\ms{\alpha}}\put(-20,5){\ms{\beta}}\put(-20,21){\ms{\gamma}}.
\end{proposition}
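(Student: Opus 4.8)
Since $\alpha\in\C\setminus\Z$, the module $V_\alpha$ is simple and $\End_\catd(V_\alpha)=\C\,\Id_{V_\alpha}$; hence the left-hand side is automatically a scalar $\lambda\,\Id_{V_\alpha}$, and the whole task is to identify $\lambda$. By the definition of the bases, $\X^{\circ 2\m}\circ\Yn{-2\m}\alpha{\beta-2\m}{\gamma-2\m}=\Yn{2\m}\alpha\beta\gamma$, so the composition is $\Zn{-2\m}\alpha\beta\gamma\circ\Yn{2\m}\alpha\beta\gamma$. The plan is to evaluate on the highest weight vector $v_0\in V_\alpha$ and read off its $v_0$-coefficient. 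On the source side $\Yn{-2\m}\alpha{\beta-2\m}{\gamma-2\m}(v_0)=v_0\otimes v_0$, and Lemma~\ref{lem:X} lets me expand $\X^{\circ 2\m}(v_0\otimes v_0)=\sum_{a=0}^{2\m}d_a\,v_a\otimes v_{2\m-a}$, each $d_a$ being a $\qr$-weighted sum over the monotone lattice paths through the ladder. On the target side, using $\Zn{-2\m}\alpha\beta\gamma=(\Id\otimes d^\gamma)\circ(\Yn{-2\m}\beta\alpha{-\gamma}\otimes\Id)$ together with the evaluation \eqref{eq:cgc-d}, only the weight-matched terms survive and $\Zn{-2\m}\alpha\beta\gamma(v_a\otimes v_{2\m-a})=c_a\,v_0$ for an explicit monomial $c_a$. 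Thus $\lambda=\sum_{a}d_ac_a$, and the goal is to show that this equals $\prod_{k=1}^{2\m}\qn{\alpha+k}=\qD(\qr^\alpha)=\qd(\alpha)^{-1}$.

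Before attacking that sum I would record why the value can depend only on $\alpha$. Both $\Zn{-2\m}\alpha\beta\gamma\circ\X$ and $\Zn{-2\m+2}\alpha{\beta-1}{\gamma-1}$ lie in the one-dimensional space $\Hom_\catd(V_{\beta-1}\otimes V_{\gamma-1},V_\alpha)$, hence are proportional, say with factor $\mu$; dually $\Yn{2\m}\alpha\beta\gamma=\X\circ\Yn{2\m-2}\alpha{\beta-1}{\gamma-1}$. Writing $G(\alpha,\beta,\gamma)$ for the scalar of $\Zn{\alpha-\beta-\gamma}\alpha\beta\gamma\circ\Yn{\beta+\gamma-\alpha}\alpha\beta\gamma$, these two facts give $G(\alpha,\beta,\gamma)=\mu\,G(\alpha,\beta-1,\gamma-1)$. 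I expect $\mu=1$, which is exactly the self-duality of the chosen bases; I would prove it by applying the cyclic symmetry $\rot$ of Proposition~\ref{P:rot} to the relation for $\Yn{2\m}\alpha\beta\gamma$ and comparing with the definition of the $\Zn{}$'s, or by evaluating a single rung with the mixed pairing of Proposition~\ref{P:Pairing} together with the fusion rule \eqref{eq:fusion}. Granting $\mu=1$, the scalar $G$ is independent of the ladder length $\beta+\gamma-\alpha$, hence of $\beta,\gamma$ for fixed $\alpha$; this both forces the apparent $\beta,\gamma$-dependence of $\sum_a d_ac_a$ to cancel and lets me read the second statement off the first.

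For the pairing I would argue graphically. By definition $\langle\omega^k(\alpha,\beta,\gamma),\omega^{-k}(-\gamma,-\beta,-\alpha)\rangle$ is $F'$ of the colored $\Theta$-graph, and cutting it along the $V_\alpha$-edge presents it as $\hat T$ for a $(1,1)$-tangle $T$ with both ends colored $V_\alpha$; thus $F'(\Theta)=s\cdot\qd(V_\alpha)$ where $F(T)=s\,\Id_{V_\alpha}$. Under the identifications \eqref{E:12iso} the tangle $T$ is precisely the composition $\Zn{\alpha-\beta-\gamma}\alpha\beta\gamma\circ\Yn{\beta+\gamma-\alpha}\alpha\beta\gamma$, so $s=G(\alpha,\beta,\gamma)=\qd(\alpha)^{-1}$ by the first part (using the $\beta,\gamma$-independence to reduce to the configuration already computed). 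Therefore $F'(\Theta)=\qd(\alpha)^{-1}\,\qd(\alpha)=1$, as claimed.

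The main obstacle is the explicit identification $\lambda=\prod_{k=1}^{2\m}\qn{\alpha+k}$: concretely this is the $q$-binomial (quantum Vandermonde) identity evaluating the lattice-path sum $\sum_a d_ac_a$ and exhibiting the cancellation of $\beta$ and $\gamma$, or, in the recursive formulation, the single clean fact that $\mu=1$. Everything else is bookkeeping with the formulas already in hand, namely Lemma~\ref{lem:X} and the duality evaluations \eqref{eq:cgc-d} and \eqref{eq:cgc-b}.
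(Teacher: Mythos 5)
Your reduction of the second statement to the first is correct and is the same as the paper's: cutting the $\Theta$-graph along the $V_\alpha$-edge gives $F'(\Theta)=\qd(V_\alpha)\cdot\lambda$, where $\lambda$ is the scalar of the displayed composition, so the two claims are equivalent. The problem is that $\lambda$ itself --- which is the entire content of the proposition --- is never computed. Evaluating on the highest weight vector $v_0\otimes v_0$, as you propose, makes $\X^{\circ 2\m}$ spread over all weight components $v_a\otimes v_{2\m-a}$, and you are left with a $q$-Vandermonde-type sum $\sum_a d_ac_a$ that you explicitly defer (``the main obstacle''). Your fallback recursion $G(\alpha,\beta,\gamma)=\mu\,G(\alpha,\beta-1,\gamma-1)$ with $\mu=1$ is likewise not established, and even granting $\mu=1$ it does not close the argument: unwinding it $2\m$ times lands you at the pairing of $\Zn{2\m}\alpha{\beta-2\m}{\gamma-2\m}$ (whose definition contains a full ladder inside it) with the base map $\Yn{\hspace*{-10ex}-2{\m}}\alpha{\beta-2\m}{\gamma-2\m}$, so the ``base case'' is another full-ladder computation of the same difficulty, not something trivially equal to $\qd(\alpha)^{-1}$.

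The paper avoids the sum entirely with one trick you do not use: the independence of $F'$ from the choice of cutting presentation. Instead of cutting the $\Theta$-graph along the $V_\alpha$-edge, cut it along a bottom edge of the ladder, colored $V_{\gamma-2\m}$, and evaluate the resulting $(1,1)$-tangle on the \emph{lowest} weight vector $v_{2\m}\in V_{\gamma-2\m}$. By Lemma \ref{lem:X}, $\X(v_i\otimes v_{2\m})$ is proportional to $v_{i+1}\otimes v_{2\m}$ (the other term vanishes because $v_{2\m+1}=0$), hence $\X^{\circ 2\m}(v_i\otimes v_{2\m})=0$ for $i\ge 1$ and the evaluation collapses to the single chain through $v_0\otimes v_{2\m}$, yielding the product $\prod_{i=1}^{2\m}\qn{\gamma-2\m+i}=\qd(V_{\gamma-2\m})^{-1}$ with no summation at all. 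This gives $F'(\Theta)=1$, and cutting back along the $V_\alpha$-edge then yields the first identity. If you insist on the highest-weight route, you must actually prove the $q$-identity you have isolated; as written, the proof has a hole exactly where the proposition lives.
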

\begin{proof}
  Let us denote by $\Theta=\epsh{fig13}{6ex}
  \put(-20,-9){\ms{\alpha}}\put(-20,5){\ms{\beta}}\put(-20,21){\ms{\gamma}}$
  the $\Theta$-graph where the coupons are filled with the morphisms
  $\omega^k(\alpha,\beta,\gamma)$ and $\omega^{-k}(-\gamma,-\beta,-\alpha)$.
 We use properties of $F'$ to compute $F'(\Theta)$ as follows.  We have 
  $$
  F'( \Theta )=\qd(V_\alpha)\left\langle
    \Zn{-2{\m}}\alpha\beta\gamma\hspace*{2ex}\circ \X^{\otimes 2{\m}}\circ
    \Yn{\hspace*{-11.5ex}-2{\m}}\alpha{\beta-2{\m}}{\gamma-2{\m}}\right\rangle
  $$
  $ =\qd(V_{\gamma-2{\m}}) \left\langle
    \Zn{-2{\m}}{\gamma-2{\m}}{2\m-\beta}\alpha\hspace*{2ex} \circ \left(
      \Id\otimes \Zn{-2{\m}}\alpha\beta\gamma\hspace*{2ex}\right) \right.
  \qquad\, $
  \\
  $\hspace*{\fill} \circ \left(\Id\otimes \X^{\circ 2{\m}}\right) \circ
  \left(b^{2{\m}-\beta}(1)\otimes\Id_{V_{\gamma-2{\m}}}\right)\Big\rangle . $
  \\
  We compute the bracket of the right hand side of the last equality by evaluating
  the morphisms on the lowest weight vector $v_{2{\m}}\in V_{\gamma-2{\m}}$.

  First remark that according to Lemma \ref{lem:X}, $\X$ sends
  $$
  V_{\delta+i}\otimes V_{\epsilon+i}\ni v_i\otimes v_{2{\m}}\mapsto
  {\qr}^{-1}\qn{\epsilon+i-2{\m}} v_{i+1}\otimes v_{2{\m}} \in
  V_{\delta+i+1}\otimes V_{\epsilon+i+1}.
  $$
Therefore, $\X^{\circ 2{\m}}(v_i\otimes v_{2{\m}})=0$ if $i\geq1$ and 
  $$
  \X^{\circ 2{\m}}(v_0\otimes
  v_{2{\m}})=-{\qr}\left(\prod_{i=1}^{2{\m}}\qn{\epsilon+i}\right)
  v_{2{\m}}\otimes v_{2{\m}}
  $$
  where here we use the equalities ${\qr}^{-2{\m}}=-{\qr}$ and
  $\qn{\epsilon+i-2{\m}}=-\qn{\epsilon+i+1}$.  Applying this to $b^{2{\m}-\beta}(1)\otimes v_{2{\m}}\in
  V_{2{\m}-\beta}\otimes V_{\beta-2{\m}}\otimes V_{\gamma-2{\m}}$ we get
  $$\Id\otimes \X^{\circ 2{\m}}\left(b^{2{\m}-\beta}(1)\otimes
    v_{2{\m}}\right)=\Id\otimes \X^{\circ 2{\m}}\left(-{\qr}\,v_{2{\m}}\otimes
    v_0\otimes v_{2{\m}}\right)
  $$
  $$
  = {\qr}^2 \left(\prod_{i=1}^{2{\m}}\qn{\gamma-2{\m}+i}\right) v_{2{\m}}\otimes
  v_{2{\m}}\otimes v_{2{\m}}\in V_{2{\m}-\beta}\otimes V_{\beta}\otimes V_\gamma .
  $$
  Now using the fact that $\Yn{-2{\m}}***(v_{2{\m}})=v_{2{\m}}\otimes v_0+\cdots$ and
  $d^*(v_0\otimes v_{2{\m}})=-{\qr}^{-1}$ we have 
  $$\Zn{-2{\m}}***(v_{2{\m}}\otimes
  v_{2{\m}})=(\Id\otimes
  d^*)\circ\left(\Yn{-2{\m}}***\otimes\Id\right)(v_{2{\m}}\otimes
  v_{2{\m}})=-{\qr}^{-1}v_{2{\m}}$$ and we see that the above bracket is equal to
  $\qd(V_{\gamma-2{\m}})^{-1}$.
\end{proof}
Remark that with Theorem \ref{T:Laurent}, the previous result can be restated as saying: the Laurent polynomial associated to the $\Theta$-graph with
heights $k,-k$ is constant equal to $1$. 
\begin{proposition}
  \begin{equation}
    \label{eq:assoc}
  \left(\Id\otimes\Yn{-2{\m}}***\right)\circ\Yn{-2{\m}}***=
  \left(\Yn{-2{\m}}***\otimes\Id\right)\circ\Yn{-2{\m}}***
  \end{equation}
  \begin{equation}
    \label{eq:assocmixte}
  \left(\Id\otimes\Yv-*\vst *\right)\circ\Yn{-2{\m}}***=
  \left(\Yv-**\vst\otimes\Id\right)\circ\Yn{-2{\m}}***.
  \end{equation}
  Here the stars can be replaced by any colors in $\C\setminus\Z$ such that
  the compositions are matching, the source and target of the maps are the same
  in both side of the equalities and the colors meeting at a trivalent vertex
   satisfy the conditions given in Subsections \ref{S:multm} and \ref{S:mult}.
\end{proposition}
\begin{proof}
  All these maps send the highest weight vector of the bottom irreducible
  module $v_0$ to $v_0\otimes v_0\otimes v_0$.
\end{proof}

Define the following operators: 
\begin{itemize}
\item $\Xl{\colon\thinspace}V_\alpha\otimes V_\beta\to V_{\alpha-1}\otimes V_{\beta +1}$ by
  $$
  \Xl=(\Id\otimes d^\vst\otimes\Id)\circ
  \left(\Yv-\alpha{\alpha-1}\vst\otimes\Yv+\beta\vst{\beta+1}\right),
  $$
\item $\Xr{\colon\thinspace}V_\alpha\otimes V_\beta\to V_{\alpha+1}\otimes V_{\beta-1}$ by
  $$
  \Xr=(\Id\otimes d^\vst\otimes\Id)\circ
  \left(\Yv+\alpha{\alpha+1}\vst\otimes\Yv-\beta\vst{\beta-1}\right),
  $$
\item  $\Xlr{\colon\thinspace}V_\alpha\otimes V_\beta\to V_{\alpha-1}\otimes V_{\beta-1}$ by
  $$
  \Xlr=(\Id\otimes d^\vst\otimes\Id)\circ
  \left(\Yv-\alpha{\alpha-1}\vst\otimes\Yv-\beta\vst{\beta-1}\right).
  $$

\end{itemize}
From Corollary \ref{coro:fusion} we have a commutation rule for these
operators:
\begin{lemma}\label{L:Xcom}We have the following equalities of maps from $V_\alpha \otimes V_\beta$,
  $$\Xl\circ \X=\X\circ\Xl\text{ and }\Xr\circ \X=\X\circ\Xr$$
  $$\Xl\circ \Xlr=\Xlr\circ\Xl\text{ and }\Xr\circ \Xlr=\Xlr\circ\Xr$$
  $$\Xl\circ \Xr-\qn1\Xlr\circ\X= \qn{\alpha+1}\qn{\beta}\Id_{V_\alpha\otimes
    V_\beta}$$ 
\end{lemma}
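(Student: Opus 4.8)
The plan is to read each of the four operators $\X,\Xl,\Xr,\Xlr$ as a single horizontal $\vst$-arc joining the $V_\alpha$-strand to the $V_\beta$-strand, the two superscript signs recording whether the weight goes up ($+$) or down ($-$) at the foot of the arc on each strand; a composite of two operators is then two such arcs stacked. The essential tool will be the fusion rule of Corollary~\ref{coro:fusion}, which expresses $\qn\alpha\,\Id_{\vst\otimes V_\alpha}$ as the difference of the $V_{\alpha-1}$-channel and the $V_{\alpha+1}$-channel, together with the bubble evaluation \eqref{eq:dual_mixte} and the vanishing $\Hom(V_\mu,V_\nu)=0$ for distinct $\mu,\nu\in\C\setminus\Z$ from Remark~\ref{R:zeroproj}. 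It is convenient first to absorb the normalization $\tfrac1{\qn1}$ built into $\X$, i.e.\ to work with $\qn1\X=(\Id\otimes d^\vst\otimes\Id)\circ(\cdots)$, so that all four arcs have the same uniform shape; in this normalization the third identity reads $\Xl\circ\Xr-\Xlr\circ(\qn1\X)=\qn{\alpha+1}\qn\beta\,\Id$.

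For the four commutations of the first two lines I would exploit the following dichotomy, visible from the sign table: in each of $\Xl\circ\X=\X\circ\Xl$, $\Xr\circ\X=\X\circ\Xr$, $\Xl\circ\Xlr=\Xlr\circ\Xl$ and $\Xr\circ\Xlr=\Xlr\circ\Xr$, exactly one of the two strands carries two feet of the \emph{same} sign (so its weight changes by $\pm2$) while the other carries one $+$ and one $-$ foot, i.e.\ a bump. The two stacking orders differ only in the shape of that bump, and I would convert one into the other by inserting Corollary~\ref{coro:fusion} along the bump-strand. This rewrites the difference of the two orders as a single remainder term: the diagram in which the two $\vst$-arcs are capped together across the same-sign strand. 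That sub-diagram is a morphism $V_\beta\to V_{\beta\pm2}$ (resp.\ $V_\alpha\to V_{\alpha\pm2}$) between non-isomorphic simple modules, hence $0$ by Remark~\ref{R:zeroproj}. Thus the two orders coincide and the four commutations follow.

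For the quadratic relation of the third line the same bookkeeping shows that $\Xl\circ\Xr$ and $\Xlr\circ(\qn1\X)$ share \emph{the same} up-bump on the $V_\alpha$-strand (through $V_{\alpha+1}$), emitting two parallel $\vst$-arcs, and differ only by a down-bump versus an up-bump on the $V_\beta$-strand; here both strands carry a bump, the degrees return to $(\alpha,\beta)$, and the remainder no longer dies. Inserting Corollary~\ref{coro:fusion} along the $V_\beta$-strand, the $V_{\beta-1}$-channel reproduces $\Xl\circ\Xr$ and the $V_{\beta+1}$-channel reproduces $\Xlr\circ(\qn1\X)$ with the opposite sign built into the fusion rule, while the surviving term $\qn\beta\,\Id_{\vst\otimes V_\beta}$ leaves the $V_\alpha$-strand bump free; evaluating that bump by the duality \eqref{eq:dual_mixte} contributes the scalar $\qn{\alpha+1}$, and one lands exactly on $\qn{\alpha+1}\qn\beta\,\Id_{V_\alpha\otimes V_\beta}$.

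The main obstacle will be the precise bookkeeping: matching each channel of the fusion rule to the correct operator including the order of the tensor factors and the accompanying powers of $\qr$, and checking that the normalizations conspire so that the commutations hold on the nose and that the $\qn1$ in front of $\Xlr\circ\X$ is precisely what the normalization of $\X$ demands. If this diagrammatic accounting becomes unwieldy, a completely routine fallback is available: derive explicit formulas for $\Xl,\Xr,\Xlr$ on the basis $v_i\otimes v_j$ in the manner of Lemma~\ref{lem:X}, and verify all three identities directly, the closing of the algebra being guaranteed by the quantum-integer identity $\qn{x+z}\qn{y+z}-\qn{x}\qn{y}=\qn{x+y+z}\qn{z}$ recorded at the start of this section.
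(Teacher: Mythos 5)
Your proposal is correct and is essentially the paper's own argument: the paper likewise obtains all the identities by sandwiching the fusion rule \eqref{eq:fusion} between a creation $\Yv\pm**\vst$ and an annihilation $\Zv\pm**\vst$ on the other strand, with the ``remainder'' term evaluating to $0$ via Remark~\ref{R:zeroproj} for the commutations and to $\qn{\alpha+1}$ via \eqref{eq:dual_mixte} for the quadratic relation. Your write-up is a faithful diagrammatic unpacking of the paper's two-sentence proof, with the same three ingredients playing the same roles.
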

\begin{proof}
  Consider the map $\End(\vst\otimes V_\gamma) \rightarrow \Hom(V_*\otimes V_\gamma, V_*\otimes V_\gamma)$ given by 
  $$y \mapsto \left(\Zv\pm**\vst\otimes\Id\right)\circ\left(\Id\otimes y
  \right)\circ\left(\Yv\pm**\vst\otimes\Id\right).$$
 The identities of the lemma are obtained by composing this map with both sides of Equation \eqref{eq:fusion}.
\end{proof}
The following proposition describes how these operators act on multiplicity modules.
\begin{proposition} For any $\alpha,\beta,\gamma\in\C\setminus\Z$ and any
  $k\in\{-\m,\ldots,\m\}$,
  $$
  \Xl\circ\Yn{\hspace{-3ex}2k}**{\beta-1}=
  \qn{\beta+{\m}-k}\Yn{\hspace{-3ex}2k}*{*-1}{\beta}
  $$
  $$
  \Xr\circ\Yn{\hspace{-3ex}2k}*{\alpha-1}*=
  \qn{\alpha+{\m}-k}\Yn{\hspace{-3ex}2k}*{\alpha}{*-1}
  $$
  $$
    \Xlr\circ\Yn{\hspace{0ex}2k\!+\!2}{\gamma}**=
  \frac{\qn{{\m}-k}}{\qn1}\qn{\gamma+k+{\m}+1}
  \Yn{\hspace{-6ex}2k}{\gamma}{*-1}{*-1}
  $$
\end{proposition}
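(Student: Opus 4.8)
The plan is to establish the three formulas in the order stated, deducing the third from the first two. For the first two I would start from the definition $\Yn{2k}\alpha\beta\gamma=\X^{\circ n}\circ\Yn{-2\m}\alpha{\beta-n}{\gamma-n}$ with $n=\m+k$ and exploit the commutation rules of Lemma~\ref{L:Xcom}. Since $\Xl\circ\X=\X\circ\Xl$ and $\Xr\circ\X=\X\circ\Xr$, the operator $\Xl$ (resp.\ $\Xr$) slides through the entire string $\X^{\circ n}$ without creating new terms, so it suffices to evaluate it on the bottom map $\Yn{-2\m}\mu{\nu-n}{\beta-1-n}$ and then slide $\X^{\circ n}$ back.

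For this base case I would evaluate on the highest weight vector $v_0$, which determines the morphism because the relevant multiplicity space is one dimensional and $\Yn{-2\m}\alpha\beta\gamma$ sends $v_0$ to $v_0\otimes v_0$. Feeding $v_0\otimes v_0$ into the definition of $\Xl$ and using the explicit trivalent vertices from Lemma~\ref{lem:X} together with $d^\vst(v_0\otimes v_1)=-\qr$, $d^\vst(v_1\otimes v_0)=1$ and $d^\vst(v_0\otimes v_0)=d^\vst(v_1\otimes v_1)=0$, exactly one term survives the $d^\vst$ contraction and I get $\Xl(v_0\otimes v_0)=-\qn{b}\,v_0\otimes v_0$, where $b$ is the label of the second factor at the bottom; symmetrically $\Xr(v_0\otimes v_0)=-\qn{a}\,v_0\otimes v_0$ with $a$ the label of the first factor. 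After sliding $\X^{\circ n}$ back, the surviving label is $b=\beta-1-n=\beta-1-\m-k$ (resp.\ $a=\alpha-1-\m-k$). The step that produces the stated coefficient is the anti-periodicity $\qn{x+\ro}=-\qn{x}$, valid because $\qr^\ro=-1$: since $\ro=2\m+1$ one has $-\qn{\beta-1-\m-k}=\qn{\beta+\m-k}$ and $-\qn{\alpha-1-\m-k}=\qn{\alpha+\m-k}$, which are exactly the first two formulas.

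To deduce the third formula I would peel one factor off the string, writing $\Yn{2k+2}\gamma\mu\nu=\X\circ\Yn{2k}\gamma{\mu-1}{\nu-1}$, and apply the last relation of Lemma~\ref{L:Xcom}, namely $\qn1\,\Xlr\circ\X=\Xl\circ\Xr-\qn{\mu}\qn{\nu-1}\Id$ on $V_{\mu-1}\otimes V_{\nu-1}$. Substituting the first two formulas to evaluate $\Xl\circ\Xr$ on $\Yn{2k}\gamma{\mu-1}{\nu-1}$ yields the scalar $\qn{\mu+\m-k}\qn{\nu-1+\m-k}-\qn{\mu}\qn{\nu-1}$. Applying the identity $\qn{x+z}\qn{y+z}-\qn{x}\qn{y}=\qn{x+y+z}\qn{z}$ with $x=\mu$, $y=\nu-1$, $z=\m-k$, and using $\mu+\nu-\gamma=2k+2$, this collapses to $\qn{\gamma+k+\m+1}\qn{\m-k}$; dividing by $\qn1$ gives precisely the claimed coefficient $\frac{\qn{\m-k}}{\qn1}\qn{\gamma+k+\m+1}$.

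The main obstacle is the base case bookkeeping: fixing the normalizations of the four elementary vertices so that $\Yn{-2\m}\alpha\beta\gamma$ genuinely sends $v_0$ to $v_0\otimes v_0$, and correctly identifying which tensor factor's weight survives the single nonzero $d^\vst$ contraction. Once that one quantum integer is pinned down, everything else is formal: the shift $\qn{x+\ro}=-\qn{x}$ for the first two formulas, and the quantum integer identity above for the third.
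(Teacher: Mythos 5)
Your proof is correct and follows the paper's strategy almost exactly: both arguments use the commutation relations of Lemma \ref{L:Xcom} to slide $\Xl$ (resp.\ $\Xr$) through $\X^{\circ n}$ down to the bottom vertex with $k=-\m$, and both obtain the third identity by applying $\qn1\,\Xlr\circ\X=\Xl\circ\Xr-\qn{\alpha+1}\qn{\beta}\Id$ to $\Yn{\hspace{-6ex}2k}{\gamma}{*-1}{*-1}$ and collapsing the resulting scalar with the identity $\qn{x+z}\qn{y+z}-\qn{x}\qn{y}=\qn{x+y+z}\qn{z}$. The only divergence is the base case of the first two formulas: the paper extracts the factor $\qn{1-\beta}=\qn{\beta+2\m}$ by composing the associativity relation \eqref{eq:assocmixte} with $\Id\otimes\Zv+*\vst*$ and invoking the pairing \eqref{eq:dual_mixte}, whereas you evaluate the composite directly on the highest weight vector and appeal to one-dimensionality of the multiplicity space; the normalization you flag as the main risk is in fact harmless, since $\Yv-{\alpha+1}{\alpha}{\vst}(v_0)=v_0\otimes v_0$ is precisely the property the paper itself uses to prove Lemma \ref{lem:com} and Equation \eqref{eq:assocmixte}, and your single surviving $d^\vst$-contraction then reproduces the correct coefficient after the shift $\qn{x+\ro}=-\qn{x}$.
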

\begin{proof}
  Let us start with the first equality.  If $k=-\m$ it is obtained by
  composing \eqref{eq:assocmixte} with $\Id\otimes\Zv+*\vst*$ where the the factor
  $\qn{1-\beta}=\qn{\beta+2\m}$ arises from the
  duality of \eqref{eq:dual_mixte}.  Now, for any $k=n-\m\in\{-\m,\ldots,\m\}$, we have
  \begin{eqnarray*}
  \Xl\circ\Yn{\hspace{-3ex}2k}**{\beta-1}= \Xl\circ X^{n}\circ
  \Yn{\hspace{-10ex}-2\m}*{*-n}{\beta-1-n}
  =X^{n}\circ\Xl\circ\Yn{\hspace{-10ex}-2\m}*{*-n}{\beta-1-n}\\
  =\qn{\beta+2\m-n}X^{n}\circ\Yn{\hspace{-10ex}-2\m}*{*-n-1}{\beta-n} =
  \qn{\beta+\m-k}\Yn{\hspace{-3ex}2k}*{*-1}{\beta}
 \end{eqnarray*}
which proves the first equality.  The proof of the second identity is similar.

  For the third, Lemma \ref{L:Xcom} implies 
  $$\qn1 \Xlr\circ\X=
  \Xl\circ\Xr-\qn{\alpha}\qn{\beta-1}\Id_{V_{\alpha-1}\otimes V_{\beta-1}}.$$
 Then since $\Yn{\hspace{0ex}2k\!+\!2}{\gamma}\alpha\beta
  =\X\circ\Yn{\hspace{-6ex}2k}{\gamma}{\alpha-1}{\beta-1}$, the identity comes
  from the equality
  $$\qn{\alpha+\m-k}\qn{\beta+\m-k-1} -\qn{\alpha}\qn{\beta-1}
  =\qn{\alpha+\beta-k+\m-1}\qn{\m-k}$$ 
  with $\gamma=\alpha+\beta-2k-2$.
\end{proof}
\begin{proposition}\label{P:sym}
  $$
  \sjv\alpha\beta\gamma{2i}{2j}{2k}
  =\sjv\beta\gamma\alpha{2j}{2k}{2i}
  =\sjv\gamma\alpha\beta{2k}{2i}{2j}
  =\sjv{-\gamma}{-\beta}{-\alpha}{2k}{2j}{2i}
  $$
  $$
  =\sjv{-\gamma}{\beta-\gamma-2i}{\alpha-\gamma+2j}{-2(i+j+k)}{2j}{2i}
  $$
  $$
  =\sjv{-\beta}{\alpha-\beta-2k}{\gamma-\beta+2i}{-2(i+j+k)}{2i}{2k}
  =\sjv{-\alpha}{\gamma-\alpha-2j}{\beta-\alpha+2k}{-2(i+j+k)}{2k}{2j}
  $$
\end{proposition}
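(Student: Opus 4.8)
The plan is to use the description of the $6j$-symbol as the Reshetikhin--Turaev evaluation $F'$ of a tetrahedral ribbon graph and to turn the $24$ combinatorial symmetries of the tetrahedron into identities among the $\sjv{}{}{}{}{}{}$, invoking the two structural facts already established about the coupons. Recall from Definition \ref{D:6j} that $\sjv\alpha\beta\gamma{2i}{2j}{2k}=F'(c(\Ga))$, where $\Ga$ is the $1$-skeleton of a tetrahedron with vertices $v_1,v_2,v_3,v_4$ of heights $2i,2j,2k,-2(i+j+k)$, each vertex carrying the canonical basis vector $\omega^{\bullet}$ of the relevant symmetric multiplicity module, and with the six edges colored as listed there. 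Since $F'$ is invariant under ambient isotopy of ribbon graphs, every element of the tetrahedral group $\Sym_4$ realized by a motion of $\Ga$ in $\R^3$ yields an identity; the colors and heights of the transformed symbol are then read off by tracking how the chosen permutation moves the edges and vertices, with an edge whose induced orientation disagrees with the convention of Definition \ref{D:6j} being reversed.

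First I would treat the orientation-preserving symmetries, i.e. the rotations (even permutations). A rigid rotation preserves the cyclic order of the three edges at each vertex, so Proposition \ref{P:rot} (equivalently, the fact that each $\omega^k$ lies in the \emph{symmetric} multiplicity module and is fixed by cyclic permutation of its legs) carries every coupon to the canonical basis vector at its new position. The only bookkeeping is that reversing an edge replaces its color $V_\alpha$ by $V_{-\alpha}$, by the duality identification $d_{V_\alpha}\cong d'_{V_{-\alpha}}\cong d^\alpha$ set up before Definition \ref{D:6j}; this negates the corresponding exponent. The $3$-cycle $(1\,2\,3)$ fixing $v_4$ produces the cyclic identities of the first line, while the rotations $(1\,4\,3)$, $(1\,4\,2)$ and $(1\,4)(2\,3)$ move $v_4$ into the triangle $v_1v_2v_3$ and give the last three identities: the entry $-2(i+j+k)$ is precisely the height $i_4=-(i+j+k)$ of $v_4$ placed in a displayed slot, and the remaining colors follow from the derived-edge formulas of Definition \ref{D:6j}.

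It remains to realize one orientation-reversing symmetry; I would use the transposition $(1\,3)$, producing the identity with $(-\gamma,-\beta,-\alpha)$ in the top row. A reflection reverses the cyclic order at each vertex, so here Proposition \ref{prop:dual} is the key input: the pairing $\langle\omega^k(\alpha,\beta,\gamma),\omega^{-k}(-\gamma,-\beta,-\alpha)\rangle=1$ furnished by the $\Theta$-graph shows that each coupon is invariant under the combined operation ``reverse the cyclic order and negate the three leg-colors,'' with no spurious scalar. Composing this coupon-level invariance with the edge-reversal identification $V_\alpha^*\cong V_{-\alpha}$ at every edge leaves $F'$ unchanged under reflection; reading off the labels (the triangle edges $v_2v_3,v_3v_1,v_1v_2$ acquire the sign change, the edges to $v_4$ do not) gives exactly $\sjv{-\gamma}{-\beta}{-\alpha}{2k}{2j}{2i}$. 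Together with the rotations above, these seven representatives generate all $24$ symmetries of $\Sym_4$.

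I expect the main obstacle to be the sign and orientation bookkeeping in the reflection case: one must verify that the single negation supplied by Proposition \ref{prop:dual} at the coupon level and the negations coming from reversing the individual edges combine so as to leave the displayed heights \emph{unnegated} while negating exactly the three triangle colors, with no residual scalar. This is precisely where the normalization ``pairing $=1$'' of Proposition \ref{prop:dual} (rather than an arbitrary nonzero constant) is indispensable. The corresponding check for the rotations is lighter, needing only Proposition \ref{P:rot} together with the elementary fact that reversing an edge negates its color.
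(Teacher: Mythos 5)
Your argument is correct and is essentially the paper's own: the paper disposes of this proposition in one line (``These identities are exactly the usual symmetries of $6j$-symbols''), relying implicitly on exactly the ingredients you make explicit --- isotopy invariance of $F'$ applied to the colored tetrahedral graph, the cyclic invariance of the canonical vectors $\omega^k$ (Proposition \ref{P:rot}) for the rotations, and the self-duality normalization of Proposition \ref{prop:dual} for the orientation-reversing case. Your bookkeeping (including which of the listed identities correspond to even permutations and which to the single reflection) is consistent with the edge/height conventions of Definition \ref{D:6j}.
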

\begin{proof}
  These identities are exactly the usual symmetries of $6j$-symbols.
\end{proof}
\begin{lemma}\label{L:rec} 
  If $i\leq\m$ and $j\geq-\m$ then
  \begin{eqnarray}
    \mathsmall{\qn{i+{\m}}\qn{\beta-\gamma-i+{\m}+2}\sjv\alpha\beta\gamma
      {2i-2}{2j+2}{2k}} 
    =\mathsmall{\qn{\gamma+i+{\m}-1}\qn{\alpha+j+{\m}+1}
      \sjv{\alpha}{\beta}{\gamma-2}{2i}{2j}{2k}}\nonumber\\
    \mathsmall{+\qn{\gamma-1}\qn{\alpha-k-{\m}}
      \sjv{\alpha+1}{\beta+1}{\gamma-1}{2i}{2j}{2k}}\label{eq:rec} 
  \end{eqnarray}
\end{lemma}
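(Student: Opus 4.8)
The plan is to read \eqref{eq:rec} off the three--term commutation relation of Lemma \ref{L:Xcom}, after reducing it to an identity inside a one--dimensional multiplicity space. First I would write out, via Definition \ref{D:6j}, the six edge--colours of the tetrahedra computing the three symbols $\sjv\alpha\beta\gamma{2i-2}{2j+2}{2k}$, $\sjv\alpha\beta{\gamma-2}{2i}{2j}{2k}$ and $\sjv{\alpha+1}{\beta+1}{\gamma-1}{2i}{2j}{2k}$. A direct substitution shows that in all three the edges meeting at $v_4$ keep the fixed colours $\beta-\gamma-2i+2$, $\gamma-\alpha-2j-2$, $\alpha-\beta-2k$, and that $v_4$ keeps the height $-2i-2j-2k$; the coupon at $v_3$ coincides for the first two symbols and, for the third, merely has its two edges $\alpha,\beta$ raised by $1$, which preserves its height $2k$ since these colours enter with opposite signs. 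Thus the three tetrahedra differ only in the outer triangle $v_1v_2v_3$. Cutting along the three legs at $v_4$ expresses each symbol as the pairing of the \emph{same} $v_4$--coupon with an outer--triangle morphism in the one--dimensional space $H(A,B,C)$, where $A,B,C$ are the leg colours; by Proposition \ref{prop:dual} this pairing is nondegenerate and identical for the three symbols, so \eqref{eq:rec} becomes a single linear relation between the three outer--triangle morphisms.

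Next I would produce that relation from Lemma \ref{L:Xcom}. The jump from the heights $(i-1,j+1,k)$ of the first symbol to the heights $(i,j,k)$ of the other two, together with the $\pm1$ colour shifts, is precisely the effect of $\X,\Xl,\Xr,\Xlr$ on the canonical bases: $\Xl,\Xr$ preserve the height, while $\X$ raises and $\Xlr$ lowers it by $2$. Concretely I would apply $\Xl\circ\Xr-\qn1\,\Xlr\circ\X=\qn{\ast+1}\qn{\ast}\Id$ to the basis morphism realising the $v_3$--corner and evaluate the four composites using the proposition preceding Proposition \ref{P:sym} --- which computes $\Xl\circ\Yn{2k}***$, $\Xr\circ\Yn{2k}***$, $\Xlr\circ\Yn{2k+2}***$ --- together with $\X\circ\Yn{2k}***=\Yn{2k+2}***$ built into the definition of the bases. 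The $\Id$--term then reproduces the left--hand side of \eqref{eq:rec}, and the $\Xl\circ\Xr$-- and $\Xlr\circ\X$--terms reproduce the two right--hand symbols, each weighted by the quantum integers supplied by the operator actions.

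The remaining work is book--keeping. The two factors of the $\Xl\circ\Xr$--term come from the separate actions of $\Xr$ and $\Xl$ (factors $\qn{\ast+\m-k}$), the factor of the $\Xlr\circ\X$--term from the $\Xlr$--action, whose denominator $\qn1$ cancels the explicit $\qn1$, and the factor $\qn{\ast+1}\qn{\ast}$ of the $\Id$--term is matched, after relabelling the placeholders by the true edge and leg colours, against $\qn{i+\m}\qn{\beta-\gamma-i+\m+2}$. To force the exact coefficients of \eqref{eq:rec} I would use the identity $\qn{x+z}\qn{y+z}-\qn x\qn y=\qn{x+y+z}\qn z$ from the start of Section~3, and absorb the sign of $-\qn1\,\Xlr\circ\X$ into the orientation--reversal scalars of $d^\ast,b^\ast$ (equivalently into $\qn{-x}=-\qn x$). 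I expect this coefficient--and--sign matching to be the only genuine obstacle, the structural input --- common $v_4$--tripod plus the forced three--term relation of Lemma \ref{L:Xcom} --- leaving no freedom. As checks I would treat the boundary cases $i=\m$ and $j=-\m$, where one right--hand symbol vanishes, and confirm consistency with the symmetries of Proposition \ref{P:sym}.
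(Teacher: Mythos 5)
Your reduction is right as far as it goes: the three tetrahedra of \eqref{eq:rec} do share the leg colours $\beta-\gamma-2i+2$, $\gamma-\alpha-2j-2$, $\alpha-\beta-2k$ and the $v_4$-coupon, so by Proposition \ref{prop:dual} the identity is equivalent to a linear relation among three outer-triangle morphisms in a one-dimensional multiplicity space, and the fusion rule is indeed the engine. The gap is in the central step. If you apply $\Xl\circ\Xr-\qn1\,\Xlr\circ\X=\qn{\ast+1}\qn{\ast}\Id$ to the canonical coupon of height $2k$ at the corner $v_3$, then by the very proposition you cite every one of the four composites is a scalar multiple of that \emph{same} coupon: $\Xl\circ\Xr$ returns it with factor $\qn{\alpha+\m-k+1}\qn{\beta+\m-k}$, and $\qn1\,\Xlr\circ\X$ returns it with factor $\qn{\m-k}\qn{\gamma+k+\m+1}$. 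The identity therefore collapses to the scalar relation $\qn{x+z}\qn{y+z}-\qn{x}\qn{y}=\qn{x+y+z}\qn{z}$ and relates each tetrahedron only to itself; it cannot generate the three \emph{distinct} symbols of \eqref{eq:rec}, which differ in the colour of the edge $v_1v_2$ (namely $\gamma$, $\gamma-2$, $\gamma-1$) and in the heights at $v_1$ and $v_2$ --- data that no operator composed and re-contracted at the corner $v_3$ can reach.

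What is missing (and what the paper's graphical proof supplies) is a transport step. One uses the factor $\qn{\alpha-\m-k}$ to open the coupon at one corner and emit a $\vst$-rung, but instead of re-contracting it there, one carries the $\vst$-strand across the tetrahedron until it runs parallel to the opposite edge $v_1v_2$ of colour $\gamma$; there the remaining factor $\qn{\gamma}$ and the fusion rule \eqref{eq:fusion} resolve $\vst\otimes V_\gamma$ into the two channels $V_{\gamma-1}$ and $V_{\gamma+1}$. The two new $\vst$-trivalent vertices now sit on the edge $v_1v_2$ and are absorbed into the coupons at $v_1$ and $v_2$; it is precisely this absorption that shifts $\gamma$ and the heights $2i$, $2j$, producing the two right-hand tetrahedra with the coefficients of \eqref{eq:rec} after the final substitution $\gamma\mapsto\gamma-1$. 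Without this step your argument has no mechanism for changing $\gamma$ or the heights at $v_1,v_2$, so the proof as outlined does not close.
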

\begin{proof} We give a graphical proof:\\
  $\qn{\gamma}\qn{\alpha-\m-k}\epsh{fig08}{16ex}
  \put(-37,14){\ms{\alpha}}\put(-18,1){\ms{\beta}}\put(-9,-5){\ms{\gamma}}
  \put(-26,25){\ms{2j}}\put(-53,-22){\ms{2k}}\put(-20,-22){\ms{2i}}
  \quad=\quad-\qn{\gamma}\epsh{fig04}{16ex}\quad =$
  \\
  \hspace*{10ex}$\qn{-\gamma}\epsh{fig05}{16ex}
  \put(-50,11){\ms{\alpha\!-\!1}}\put(-30,-1){\ms{\beta\!-\!1}}
  \quad\stackrel{\eqref{eq:fusion}}{=}\quad
  \epsh{fig07}{16ex}\quad-\quad\epsh{fig06}{16ex}\quad=$
  \\
  $\ms{\qn{i+\m}\qn{\gamma-\beta+i+\m-1}}\epsh{fig08}{16ex}
  \put(-47,14){\ms{\alpha\!-\!1}}\put(-30,-2){\ms{\beta\!-\!1}}
  \put(-1,-6){\ms{\gamma\!+\!1}}
  \put(-37,28){\ms{2j\!+\!2}}\put(-53,-22){\ms{2k}}\put(-31,-22){\ms{2i\!-\!2}}
  $ 
  \hspace*{4ex} $ -\ms{\qn{\gamma+i+\m}\qn{\alpha+j+\m}}\epsh{fig08}{16ex}
  \put(-47,14){\ms{\alpha\!-\!1}}\put(-30,-2){\ms{\beta\!-\!1}}
  \put(-1,-6){\ms{\gamma\!-\!1}}
  \put(-26,25){\ms{2j}}\put(-53,-22){\ms{2k}}\put(-20,-22){\ms{2i}}$
  \\
  then substitute $\gamma$ with $\gamma-1$.
\end{proof}
\begin{proposition}\label{th:bound6j}\ \\
  $\bullet$ If $i+j+k={\m}$ then 
  \begin{equation}
    \label{eq:low_formula}
    \sjv\alpha\beta\gamma{2i}{2j}{2k}=
    \qn{i,j,k}\qn{\alpha\!-\!{\m}\!-\!k;{\m}\!-\!i}! 
    \qn{\beta\!-\!{\m}\!-\!i;{\m}\!-\!j}!
    \qn{\gamma\!-\!{\m}\!-\!j;{\m}\!-\!k}!
  \end{equation}
  $\bullet$ If $i+j+k=\!-\!{\m}$ then
  \begin{equation}
    \label{eq:high_formula}
    \sjv\alpha\beta\gamma{2i}{2j}{2k}
    =\qn{\delta+1;{\m}+i}!\qn{\epsilon+1;{\m}+j}!\qn{\phi+1;{\m}+k}!
  \end{equation}
  where we use the notation $\left\{
  \begin{array}[c]{l}
    \delta=\beta-\gamma-2i\\ \epsilon=\gamma-\alpha-2j\\ \phi=\alpha-\beta-2k.
  \end{array}\right. $
\end{proposition}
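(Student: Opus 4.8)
The plan is to evaluate $F'$ of the tetrahedron of Definition~\ref{D:6j} directly, using the fact that in each boundary case exactly one of its four coupons is extremal. When $i+j+k=\m$ the three colours meeting at the vertex $v_4$ sum to $-2\m$, so the coupon there is the lowest morphism $\omega^{-2\m}$ of Section~\ref{S:mult}; when $i+j+k=-\m$ they sum to $2\m$ and the coupon is the top morphism $\omega^{2\m}$. I will carry out the case $i+j+k=\m$ in detail and obtain the case $i+j+k=-\m$ by the mirror computation. The two formulas are structurally different and, since the symmetries of Proposition~\ref{P:sym} only permute the four vertices $v_1,\dots,v_4$, the high case is not a relabelling of the low one, so it genuinely requires the dual calculation.

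For $i+j+k=\m$ I would rewrite each of the three non-extremal coupons $\omega^{2i},\omega^{2j},\omega^{2k}$ at $v_1,v_2,v_3$ through the defining identity of Section~\ref{S:mult}, which presents the relevant symmetric vector as a power $\X^{\circ(\m+\cdot)}$ applied to a copy of $\omega^{-2\m}$ (the cyclic compatibility of these bases, Proposition~\ref{P:rot}, lets me ignore which edges sit at the top or bottom of each coupon). This realizes the tetrahedron as the $\Theta$-graph built from the two extremal coupons $\omega^{-2\m}$ and $\omega^{2\m}$ -- whose $F'$-value equals $1$ by Proposition~\ref{prop:dual} -- decorated along its edges by a string of the operators $\X$.

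The heart of the computation is then to transport these operators around the graph and absorb them one at a time into the extremal coupons. Using the commutations of Lemmas~\ref{L:Xcom} and~\ref{lem:comX} I can slide the $\X$'s and, where they meet a duality cap, convert them into $\Xl,\Xr,\Xlr$ by Corollary~\ref{coro:fusion}; the proposition describing the action of $\Xl,\Xr,\Xlr$ on the multiplicity bases then absorbs each one, peeling off a single quantum integer $\qn{\cdot}$. Collecting these along the three edges assembles exactly the three factorials $\qn{\alpha-\m-k;\m-i}!$, $\qn{\beta-\m-i;\m-j}!$ and $\qn{\gamma-\m-j;\m-k}!$ of~\eqref{eq:low_formula}, while the leftover modified dimension $\qd(V_\alpha)$ and the duality constants of~\eqref{eq:cgc-d} and~\eqref{eq:cgc-b} should collapse, with the help of $\qn{2\m}!=(-1)^\m r$ and the identity $\qn{x+z}\qn{y+z}-\qn x\qn y=\qn{x+y+z}\qn z$, to the single prefactor $\qn{i,j,k}$.

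For $i+j+k=-\m$ the same scheme applies with $\omega^{2\m}=\X^{\circ2\m}\circ\omega^{-2\m}$ now sitting at $v_4$; evaluating on the lowest weight vector $v_{2\m}$ -- exactly as in the model identity expressing $\X^{\circ2\m}(v_0\otimes v_{2\m})$ as a multiple of $v_{2\m}\otimes v_{2\m}$ by a product of $2\m$ quantum integers in the proof of Proposition~\ref{prop:dual} -- turns the same bookkeeping into the factorials $\qn{\delta+1;\m+i}!$, $\qn{\epsilon+1;\m+j}!$, $\qn{\phi+1;\m+k}!$ in the variables $\delta,\epsilon,\phi$, and no $\qn{i,j,k}$ prefactor survives because the top coupon $\omega^{2\m}$ already carries that normalization. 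I expect the single real obstacle to be organizational rather than conceptual: controlling the powers of $\qr$ hidden in $\qn{\alpha;k}!=\Fn k{\qr^\alpha}$ as the operators are commuted past one another, and checking that once the dimension factors cancel the accumulated scalar is the stated product on the nose, with no stray sign or power of $\qr$.
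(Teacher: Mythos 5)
Your strategy---evaluating $F'$ of the tetrahedron directly by decomposing the non-extremal coupons as $\X$-powers of $\omega^{-2\m}$ and absorbing the resulting operators---is genuinely different from the paper's proof, which establishes only the single corner case $(i,j,k)=(-\m,\m,\m)$ graphically and then runs an induction on $n=2\m-\max(i,j,k)+\min(i,j,k)$ by verifying that the closed formulas satisfy the three-term recurrence of Lemma~\ref{L:rec}. As written, however, your argument has two real gaps. First, decomposing the three coupons $\omega^{2i},\omega^{2j},\omega^{2k}$ as $\X^{\circ(\m+\cdot)}\circ\omega^{-2\m}$ does \emph{not} turn the tetrahedron into a $\Theta$-graph: it leaves four extremal coupons sitting on the tetrahedral $1$-skeleton with a total of $4\m$ copies of $\X$ interleaved along the edges. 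Collapsing four trivalent coupons down to two requires the associativity relation \eqref{eq:assoc} (this is exactly how the paper handles its base case, see \eqref{E:6jwithd}), and \eqref{eq:assoc} only applies once the $\X$'s separating the coupons have been cleared away; this ordering problem is not addressed in your plan.

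Second, the heart of the computation---transporting those $4\m$ operators around the graph, converting them into $\Xl,\Xr,\Xlr$, and showing that the accumulated scalars give exactly the stated product---is asserted rather than carried out. In particular the prefactor $\qn{i,j,k}=\qn{2\m}!/\bigl(\qn{\m-i}!\,\qn{\m-j}!\,\qn{\m-k}!\bigr)$ in \eqref{eq:low_formula} is a \emph{ratio} of factorials, so it cannot arise purely from peeling off one quantum integer per absorbed operator; a genuine cancellation against the $\qd$-factors must occur, and the phrase ``should collapse'' is precisely where the proof lives. Likewise the claim that no such prefactor survives when $i+j+k=-\m$ ``because $\omega^{2\m}$ already carries that normalization'' is not a computation. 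I believe your route could be pushed through (the absorption identities for $\Xl,\Xr,\Xlr$ exist for exactly this kind of purpose), but in its present form it is a plan rather than a proof. If you want to avoid the full graphical bookkeeping, note that a recurrence such as Lemma~\ref{L:rec} reduces everything to the single extreme case plus two applications of $\qn{x+z}\qn{y+z}-\qn{x}\qn{y}=\qn{x+y+z}\qn{z}$; that is the paper's much shorter path, and it is worth comparing your intended intermediate identities against it.
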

\begin{proof}
  Remark first that these formulas are symmetric for the action of $\Sym_3$
  which permute simultaneously $\{i,j,k\}$ and $\{\alpha,\beta,\gamma\}$ and
  multiplies the last three variables by the signature of the permutation (see Proposition \ref{P:sym}).  
  
  We will prove these identities by a recurrence on the natural number
  $n=2\m-\max(i,j,k)+\min(i,j,k)$.  First, we prove  Equation
  \eqref{eq:low_formula} when $n=0$.  In this case up to a  permutation we have
  $(i,j,k)=(-\m,{\m},{\m})$.
  Hence, we must prove that
  \begin{equation}\label{eq:extr_low_formula}
  \sjv\alpha\beta\gamma{-2\m}{2\m}{2\m}=\qn{\alpha-2\m;2\m}!=\qd(\alpha)^{-1}.
  \end{equation}
  To do this, recall Equation \eqref{eq:assoc}:
  $$
  \left(\Yn{\hspace{-9ex}-2\m}**{\alpha-\beta-2\m}\otimes\Id\right)\circ
  \Yn{\hspace{0ex}-2\m}\gamma*\beta
  =\left(\Id\otimes\Yn{\hspace{0ex}-2\m}\alpha*\beta\right)\circ
  \Yn{\hspace{0ex}-2\m}\gamma*\alpha.
  $$
  Composing this identity with $\Zn{\hspace{0ex}2\m}**\alpha\circ
  \left(\Id\otimes\Zn{\hspace{0ex}2\m}\alpha**\right)$, we get that
\begin{equation}\label{E:6jwithd}
\qd(\gamma)^{-1}\sjv\alpha\beta\gamma{-2\m}{2\m}{2\m}
  =\Zn{\hspace{0ex}2\m}\gamma*\alpha\circ
  \left(\Id\otimes\left(\Zn{\hspace{0ex}2\m}\alpha**\circ
      \Yn{\hspace{0ex}-2\m}\alpha**\right)\right)\circ
  \Yn{\hspace{0ex}-2\m}\gamma*\alpha.
 \end{equation}
  Proposition \ref{prop:dual} states that
  $\Zn{\hspace{0ex}2\m}x**\circ \Yn{\hspace{0ex}-2\m}x**=\qd(x)^{-1}\Id$.  Applying this identity twice in Equation \eqref{E:6jwithd} we arrive at \eqref{eq:extr_low_formula}.  Remark
  that using the symmetries of the $6j$-symbols, Equation
  \eqref{eq:extr_low_formula} also implies that
  \begin{equation}\label{eq:extr_high_formula}
  \sjv\alpha\beta\gamma{2\m}{-2\m}{-2\m}=\qn{\beta-\gamma-2\m;2\m}!
  \end{equation}
  which is the case $n=0$  for Equation \eqref{eq:high_formula}.
  
  Now let $\mathsmall{\sjv\alpha\beta\gamma{2i}{2j}{2k}'}$ be the right hand
  sides of Equation \eqref{eq:low_formula} (respectively Equation
  \eqref{eq:high_formula}). By induction, it is enough to show that
  $\mathsmall{\sjv\alpha\beta\gamma{2i}{2j}{2k}'}$ satisfy the relation of
  Lemma \ref{L:rec}.  Indeed, this relation applied to both side of Equation
  \eqref{eq:low_formula} (respectively Equation \eqref{eq:high_formula}) after
  a well chosen permutation of $(i,j,k)$ allows us to reduce $n$ by $1$ or
  $2$.
  \\
  $\bullet$ Let us start with Equation \eqref{eq:low_formula}.  Here $i+j+k=\m$ and direct computation shows
  \\
  $
  \begin{array}{r}
    \mathsmall{\qn{\gamma+i+{\m}-1}\qn{\alpha+j+{\m}+1}
      \sjv{\alpha}{\beta}{\gamma-2}{2i}{2j}{2k}}'
    \mathsmall{+\qn{\gamma-1}\qn{\alpha-k-{\m}}
      \sjv{\alpha+1}{\beta+1}{\gamma-1}{2i}{2j}{2k}}'=
    \\
    \mathsmall{\qn{i,j,k}\qn{\alpha-\m-k,\m-i+1}!\qn{\beta-i-\m+1,\m-j-1}!
      \qn{\gamma-j-\m-1,\m-k}!\times}
    \\
    \mathsmall{
      (-\qn{\beta-i-\m}\qn{\gamma-j+\m-1}+\qn{\beta+k-\m}\qn{\gamma-1})}
  \end{array}
  $
  \\
  where we use that $\qn{x\pm(2\m+1)}=-\qn{x}$.  Now this is equal to\\
  $\mathsmall{\qn{i+{\m}}\qn{\beta-\gamma-i+2+{\m}}\sjv\alpha\beta\gamma
    {2i-2}{2j+2}{2k}'}$ since
  $$\mathsmall{\qn{\beta+k-\m}\qn{\gamma-1}-\qn{\beta-i-\m}\qn{\gamma-j+\m-1}
    =\qn{\m-j}\qn{\beta-\gamma-i+\m+2}}.$$
    Thus, $\mathsmall{\sjv\alpha\beta\gamma{2i}{2j}{2k}'}$
  satisfy the relation of Lemma \ref{L:rec} when $i+j+k=\m$.
  \\
  $\bullet$ We now deal similarly with the case $i+j+k=-\m$.
  \\
  $
  \begin{array}{r}
    \mathsmall{\qn{\gamma+i+{\m}-1}\qn{\alpha+j+{\m}+1}
      \sjv{\alpha}{\beta}{\gamma-2}{2i}{2j}{2k}}'
    \mathsmall{+\qn{\gamma-1}\qn{\alpha-k-{\m}}
      \sjv{\alpha+1}{\beta+1}{\gamma-1}{2i}{2j}{2k}}'=
    \\
    \mathsmall{\qn{\beta-\gamma-2i+3,i+\m}!\qn{\gamma-\alpha-2j-1,j+\m}!
      \qn{\alpha-\beta-2k+1,k+\m}!\times}
    \\
    \mathsmall{\left(\qn{\gamma+i+\m-1}\qn{\alpha+j+\m+1}
        +\qn{\gamma-1}\qn{\alpha+i+j}\right)} 
  \end{array}
  $
  \\
  But now this is equal to
  $\mathsmall{\qn{i+{\m}}\qn{\beta-\gamma-i+2+{\m}}\sjv\alpha\beta\gamma
    {2i-2}{2j+2}{2k}'}$ because
  $$\mathsmall{\left(\qn{\gamma+i+\m-1}\qn{\alpha+j+\m+1}
      +\qn{\gamma-1}\qn{\alpha+i+j}\right)=\qn{i+\m}\qn{\gamma-\alpha-j-1+\m}}.$$
       Thus,  $\mathsmall{\sjv\alpha\beta\gamma{2i}{2j}{2k}'}$
  satisfy the relation of Lemma \ref{L:rec} when $i+j+k=-\m$ and this completes the proof.
\end{proof}

Let us now rewrite and generalize the relation of Lemma \ref{L:rec}:
\begin{lemma}\label{L:recud}Let $(i,j,k)\in\H_\m$ and let $l=-i-j-k$.
  Here, we again use the ``colors'': $\left\{
    \begin{array}[c]{l}
      \delta=\beta-\gamma-2i\\ \epsilon=\gamma-\alpha-2j\\ \phi=\alpha-\beta-2k
    \end{array}\right.$
  \\
  $\bullet$ If $i+j+k=-l<\m$ and $k<\m$ then
  \begin{equation}
    \label{eq:recsym}
    \begin{array}[t]{r}
      \mathsmall{\sjv{\alpha}{\beta}{\gamma}{2i}{2j}{2k}=
        \frac1{\qn{k-\m}\qn{-\alpha+k-\m}}\Bigg(}
      \mathsmall{\qn{-\phi-k-\m}\qn{\delta-l-\m}
        \sjv{\alpha}{\beta}{\gamma}{2i}{2j}{2k+2}}
      \\
      \hspace{10ex}\mathsmall{+\qn{\phi-1}\qn{-\delta-i-\m}
        \sjv{\alpha}{\beta-1}{\gamma}{2i}{2j}{2k+2}\Bigg)}
    \end{array}
  \end{equation}
%
  $\bullet$ If $N>0$, $i+j+k=-l\leq\m-N$ and $k\leq\m-N$ then
  \begin{equation}
    \label{eq:recgenup}
    \begin{array}[t]{r}
      \mathsmall{\sjv{\alpha}{\beta}{\gamma}{2i}{2j}{2k}
        =\frac{1}{\qn{k-\m;N}! \qn{-\alpha+k-\m;N}!}
        \displaystyle{\Bigg(\sum_{n=0}^N\qb
          Nn\qn{-\phi-k-\m;N-n}!\times}}
      \\
      \mathsmall{\qn{\delta-l-\m;N-n}!\qn{\phi-N;n}!\qn{-\delta-i-\m;n}!
        \sjv{\alpha}{\beta-n}{\gamma}{2i}{2j}{2k+2N}\Bigg)}
    \end{array}
  \end{equation}
    
  $\bullet$ If $N>0$, $i+j+k=-l\geq N-\m$ and $k\geq N-\m$ then 
  \begin{equation}
    \label{eq:recgendown}
    \begin{array}[t]{r}
      \mathsmall{\sjv{\alpha}{\beta}{\gamma}{2i}{2j}{2k}
        =\frac{1}{\qn{l-\m;N}! \qn{-\epsilon+l-\m;N}!}
        \displaystyle{\Bigg(\sum_{n=0}^N\qb
          Nn\qn{\phi-l-\m;N-n}!\qn{-\beta-k-\m;N-n}!}}
      \\
      \mathsmall{\qn{-\phi-N;n}!\qn{\beta-i-\m;n}!
        \sjv{\alpha}{\beta+n}{\gamma}{2i}{2j}{2k-2N}\Big)}
    \end{array}
  \end{equation}
\end{lemma}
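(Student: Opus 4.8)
The plan is to derive all three identities from the single three–term recursion of Lemma~\ref{L:rec} (Equation~\eqref{eq:rec}): \eqref{eq:recsym} is a pure rewriting of \eqref{eq:rec} by a tetrahedral symmetry, \eqref{eq:recgenup} is its $N$–fold iteration, and \eqref{eq:recgendown} is the image of \eqref{eq:recgenup} under the symmetry exchanging $k$ and $l$. For \eqref{eq:recsym} I would apply to every symbol in \eqref{eq:rec} the element of the tetrahedral group of Proposition~\ref{P:sym} that interchanges the pairs $\{i,k\}$ and $\{j,l\}$, namely
$$\sjv\alpha\beta\gamma{2i}{2j}{2k}=\sjv{-\delta}{-\beta}{\phi}{2k}{2l}{2i},$$
obtained by composing the generators listed in Proposition~\ref{P:sym}. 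Under this substitution the shift $(i,j)\mapsto(i-1,j+1)$ of \eqref{eq:rec} becomes a shift of the third height alone, so that the three transformed symbols have heights $(k,l,i-1)$, $(k,l,i)$, $(k,l,i)$; solving the transformed relation for the symbol of heights $(k,l,i-1)$ and relabelling reproduces \eqref{eq:recsym} exactly. The only nonroutine point is that the two prefactors and the coefficient of the first term agree only after using $\qn{x+\ro}=-\qn{x}$ (valid because $m$ is odd), the same relation already invoked in the proof of Proposition~\ref{th:bound6j}; the coefficient of the second term matches on the nose. In particular \eqref{eq:recsym} is the case $N=1$ of \eqref{eq:recgenup}.

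Next I would prove \eqref{eq:recgenup} by induction on $N$, the base case $N=1$ being \eqref{eq:recsym}. For the step $N\to N+1$ I would first apply the inductive hypothesis to pass from height $2k$ to height $2k+2N$, and then apply \eqref{eq:recsym} to each summand $\sjv{\alpha}{\beta-n}{\gamma}{2i}{2j}{2k+2N}$ to reach $2k+2(N+1)$; the hypotheses $i+j+k\le\m-(N+1)$ and $k\le\m-(N+1)$ guarantee that both steps are legitimate. The prefactor closes up because $\qn{k-\m;N}!\,\qn{k-\m+N}=\qn{k-\m;N+1}!$ and likewise for the factor $\qn{-\alpha+k-\m;N}!$. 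After reindexing the second family of terms by $p=n+1$ and collecting the coefficient of $\sjv{\alpha}{\beta-p}{\gamma}{2i}{2j}{2k+2(N+1)}$, the desired binomial $\qb{N+1}{p}$ together with the extended quantum factorials $\qn{\cdots;(N+1)-p}!$ and $\qn{\phi-(N+1);p}!$ emerge once one knows the quantum–integer identity
$$\qn{N+1-p}\qn{y+p}+\qn{p}\qn{y+p-N-1}=\qn{N+1}\qn{y},\qquad y=\phi-N-1,$$
which is a specialization of the fundamental identity $\qn{x+z}\qn{y+z}-\qn{x}\qn{y}=\qn{x+y+z}\qn{z}$ (or a one–line direct expansion). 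This bookkeeping—showing that the two contributions to each coefficient assemble into the single closed form of \eqref{eq:recgenup}—is the main obstacle of the proof.

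Finally I would deduce \eqref{eq:recgendown} from \eqref{eq:recgenup} by the tetrahedral symmetry exchanging $k$ with $l=-i-j-k$,
$$\sjv\alpha\beta\gamma{2i}{2j}{2k}=\sjv{\epsilon}{-\delta}{\gamma}{2i}{2j}{2l}.$$
Writing \eqref{eq:recgenup} for the symbol on the right (so that its indices are $A=\epsilon$, $B=-\delta$, $C=\gamma$, $I=i$, $J=j$, $K=l$) and then transforming every symbol back by the same involution, the increment $l\mapsto l+N$ becomes $k\mapsto k-N$ and the summation symbols become $\sjv{\alpha}{\beta+n}{\gamma}{2i}{2j}{2k-2N}$. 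A short computation gives $A-B-2K=-\phi$, $B-C-2I=-\beta$ and $-I-J-K=k$, so the prefactor $\qn{K-\m;N}!\,\qn{-A+K-\m;N}!$ turns into $\qn{l-\m;N}!\,\qn{-\epsilon+l-\m;N}!$ and the four factorials in the summand turn into exactly those of \eqref{eq:recgendown}; the constraints $I+J+K\le\m-N$ and $K\le\m-N$ become $i+j+k\ge N-\m$ and $k\ge N-\m$. This completes the proof.
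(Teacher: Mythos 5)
Your proposal is correct and follows essentially the same route as the paper: \eqref{eq:recsym} is extracted from Lemma \ref{L:rec} by a tetrahedral symmetry of Proposition \ref{P:sym} (you use the conjugate element sending the heights to $(k,l,i)$, the paper the one sending them to $(l,k,i)$, with the same $\qn{x+\ro}=-\qn{x}$ bookkeeping), \eqref{eq:recgenup} is proved by induction on $N$ using exactly the paper's $q$-binomial identity (your displayed identity is the paper's after dividing by $\qb{N+1}{n}$), and \eqref{eq:recgendown} is deduced via the identical symmetry $\sjv{\alpha}{\beta}{\gamma}{2i}{2j}{2k}=\sjv{\epsilon}{-\delta}{\gamma}{2i}{2j}{2l}$.
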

\begin{proof}
  The first relation \eqref{eq:recsym} is obtained from Equation
  \eqref{eq:rec} by using the symmetry $\sjv{\alpha}{\beta}{\gamma}{2i}{2j}{2k}
  =\sjv{\beta}{\beta-\gamma-2i}{\beta-\alpha+2k}{2l}{2k}{2i}$ and renaming the
  variables.

  The second relation is shown by recurrence on $N$.  For $N=1$ it is just
  Equation \eqref{eq:recsym}. The induction step also follows from Equation 
  \eqref{eq:recsym}.  The meticulous reader who wants to check this relation carefully will have to use the identity:
  $$\mathsmall{\qb Nn \qn{\phi+n-N-1}+\qb N{n-1}\qn{\phi+n-2N-2}
    =\qb{N+1}n\qn{\phi-N-1}}.$$ 

  The third identity can be obtained from the second by using the symmetry
  $\sjv{\alpha}{\beta}{\gamma}{2i}{2j}{2k}
  =\sjv{\epsilon}{-\delta}{\gamma}{2i}{2j}{2l}$ and renaming the variables.
\end{proof}
\begin{theorem}\label{Th:formula}\ \\
  $\bullet$ For any $(i,j,k)\in\H_\m$ with $i,k\leq i+j+k$, let
  $N=\m-i-j-k$.  Then
  \begin{equation}
    \label{eq:mastertop}
    \begin{array}[t]{r}  \sjv{\alpha}{\beta}{\gamma}{2i}{2j}{2k}=
      \qn{i,j,k}\qn{\alpha-{\m}-k;j+k}!\qn{\gamma-{\m}-j;i+j}!
      \displaystyle{\left(\sum_{n=0}^N\qb Nn\times\right.}
      \hspace*{0ex}\\
      \qn{\beta+k-\alpha+\m+1;N-n}!\qn{\beta+k-\gamma-i+j-\m;N-n}!
      \\
      \qn{\alpha-\beta-2k-N;n}!\qn{\gamma-\beta+i+\m+1;n}!
      \qn{\beta-{\m}-i-n;{\m}-j}!\Big)
    \end{array}
  \end{equation}
      \\[1ex]
  $\bullet$ For any $(i,j,k)\in\H_\m$ with $j,k\geq i+j+k$, let
  $N=\m+i+j+k$.  Then
  \begin{equation}
    \label{eq:masterbot}
    \begin{array}[t]{r}
      {\sjv{\alpha}{\beta}{\gamma}{2i}{2j}{2k}
        =\frac{\qn{\epsilon+N+1;{\m}+j-N}!}{\qn{N}! }
        \displaystyle{\Bigg(\sum_{n=0}^N\qb
          Nn\qn{\beta-i-j+n+1;N-n}!\times}}
      \\
      {\qn{\beta-i+\m+1;n}!\qn{\phi+N-n+1;\m+k}!
        \qn{\delta+n+1;{\m}+i}!\Bigg)}
    \end{array}
  \end{equation}
  where we use the notation $\left\{
  \begin{array}[c]{l}
    \delta=\beta-\gamma-2i\\ \epsilon=\gamma-\alpha-2j\\ \phi=\alpha-\beta-2k.
  \end{array}\right. $
\end{theorem}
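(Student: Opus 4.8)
The plan is to deduce each of the two master formulas from the matching boundary case of Proposition~\ref{th:bound6j} by a single application of the closed $N$-fold recursion of Lemma~\ref{L:recud}; the point is that the hypotheses imposed on $(i,j,k)$ are exactly what is needed to run the recursion all the way to the boundary in one step. Consider \eqref{eq:mastertop}, where $i,k\le i+j+k$ and $N=\m-i-j-k$. When $N=0$ we have $i+j+k=\m$, the sum reduces to its $n=0$ term, and using $j+k=\m-i$ and $i+j=\m-k$ one checks that this term is precisely the right hand side of \eqref{eq:low_formula}; so the case $N=0$ is Proposition~\ref{th:bound6j}. For $N>0$ I would apply \eqref{eq:recgenup} with this same $N$. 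Its hypotheses $i+j+k\le\m-N$ and $k\le\m-N$ hold, the first by the very choice of $N$ and the second because it is the assumption $k\le i+j+k$; the recursion then rewrites $\sjv{\alpha}{\beta}{\gamma}{2i}{2j}{2k}$ as a sum over $n$ of the symbols $\sjv{\alpha}{\beta-n}{\gamma}{2i}{2j}{2k+2N}$, each of which now lies on the boundary $i+j+(k+N)=\m$ and is therefore given by \eqref{eq:low_formula}.

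Substituting this closed value and pulling the $n$-independent factors $\qn{i,j,k+N}$, $\qn{\alpha-\m-k-N;\m-i}!$ and $\qn{\gamma-\m-j;\m-k-N}!$ out of the sum leaves a single sum in $n$ to be compared with the right hand side of \eqref{eq:mastertop}. Several factors already match on the nose: one has $\m-k-N=i+j$, so $\qn{\gamma-\m-j;\m-k-N}!=\qn{\gamma-\m-j;i+j}!$, and the arguments $\delta-l-\m$, $\phi-N$, $\beta-\m-i-n$ appearing in \eqref{eq:recgenup} coincide term for term with their counterparts in \eqref{eq:mastertop}. The remaining factors match only after two reductions. First, $\qn{\alpha-\m-k-N;\m-i}!$ factors as $\qn{\alpha-\m-k-N;N}!\,\qn{\alpha-\m-k;j+k}!$ (since $\m-i=N+j+k$), producing the prefactor $\qn{\alpha-\m-k;j+k}!$ of \eqref{eq:mastertop} together with a leading block $\qn{\alpha-\m-k-N;N}!$. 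Second, this block, together with the denominators $\qn{k-\m;N}!$ and $\qn{-\alpha+k-\m;N}!$ and with $\qn{i,j,k+N}$, collapses after repeated use of $\qn{-x}=-\qn{x}$ and $\qn{x\pm\ro}=-\qn{x}$ to $(-1)^N\qn{i,j,k}$. The analogous reduction works for \eqref{eq:masterbot}: the hypotheses $j,k\ge i+j+k$ let one apply \eqref{eq:recgendown} with $N=\m+i+j+k$, reaching the boundary $i+j+(k-N)=-\m$ governed by \eqref{eq:high_formula}.

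The main obstacle is the sign bookkeeping in this matching. In \eqref{eq:mastertop} the arguments $\beta+k-\alpha+\m+1$ and $\gamma-\beta+i+\m+1$ differ from the recursion's $-\phi-k-\m=\beta+k-\alpha-\m$ and $-\delta-i-\m=\gamma-\beta+i-\m$ by a shift of $\ro=2\m+1$, so the identity $\qn{x\pm\ro}=-\qn{x}$ (valid because $m$ is odd, being coprime to $2\ro$) must be invoked on the $N-n$ factors of the first and the $n$ factors of the second. These contribute $(-1)^{N-n}$ and $(-1)^n$, whose product $(-1)^N$ is independent of $n$ and can be pulled out of the sum, where it cancels exactly against the $(-1)^N$ produced by the prefactor collapse above. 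Once these signs are reconciled the two sums agree term by term, which proves \eqref{eq:mastertop}; the computation for \eqref{eq:masterbot} is identical in structure, carried through with \eqref{eq:recgendown} and \eqref{eq:high_formula}. Since each formula is thereby shown to equal the genuine $6j$-symbol $\sjv{\alpha}{\beta}{\gamma}{2i}{2j}{2k}$, the two expressions automatically agree on the overlap of their domains of validity.
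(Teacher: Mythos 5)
Your proof is correct and follows exactly the route of the paper: apply the $N$-fold recursion \eqref{eq:recgenup} (resp.\ \eqref{eq:recgendown}) with $N=\m\mp(i+j+k)$ to push the symbol to the boundary $i+j+k=\pm\m$, substitute the closed forms \eqref{eq:low_formula} and \eqref{eq:high_formula} of Proposition~\ref{th:bound6j}, and reconcile the factors using $\qn{-x}=-\qn{x}$ and $\qn{x\pm\ro}=-\qn{x}$. The paper states this in two sentences without any of the bookkeeping; your verification of the hypothesis checks, the factorial splitting $\qn{\alpha-\m-k-N;\m-i}!=\qn{\alpha-\m-k-N;N}!\,\qn{\alpha-\m-k;j+k}!$, and the cancellation of the two $(-1)^N$ contributions supplies precisely the omitted details and is accurate.
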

\begin{proof}
  The first formula is obtained by replacing
  $\sjv\alpha{\beta-n}\gamma{2i}{2j}{2k+2N}$ with
  $$\mathsmall{\qn{i,j,k}\qn{\alpha-{\m}-k+N;{\m}-i}!
    \qn{\beta-{\m}-i-n;{\m}-j}!  \qn{\gamma-{\m}-j;{\m}-k-N}!}$$
     in Equation \eqref{eq:recgenup} where $l=-i-j-k=N-\m$.   The second formula is obtained
  from Equation \eqref{eq:recgendown} by replacing
  $\sjv\alpha{\beta+n}\gamma{2i}{2j}{2k-2N}$ with
  $$\qn{\delta+n+1;{\m}+i}!\qn{\epsilon+1;{\m}+j}!
    \qn{\phi+2N-n+1;{\m}+k-N}!$$ where $l=-i-j-k=\m-N.$
\end{proof}

\linespread{1}

\vfill
\end{document}